\newtheorem{theorem}{Theorem}[section]
\newtheorem{lemma}[theorem]{Lemma}
\newtheorem{corollary}[theorem]{Corollary}
\newtheorem{remark}[theorem]{Remark}
\newtheorem{assumption}{Assumption}
\newcommand{\dx}{\,\text{\rm{}d}x}
\newcommand{\ds}{\,\text{\rm{}d}s}
\newcommand{\be}{\begin{equation}}
\newcommand{\ee}{\end{equation}}
\def \R{\mathbb{R}}
\def \eps{\varepsilon}
\def \Uad{{U_{\text{ad}}}}
\def \Uadrho{{U_{\text{ad}, \rho}}}
\def \dx{\mathrm{d}x}
\newcommand{\meas}{\operatorname{meas}}
\renewcommand{\phi}{\varphi}
\begin{document}





\date{}

\title{\vspace{-3cm}Tikhonov regularization of optimal control problems governed by semi-linear partial differential equations\footnote{This work was supported by the German Research Foundation DFG under project grant \mbox{Wa 3626/1-1}.}
}

\maketitle

\centerline{\scshape Frank P\"orner}
\medskip
{\footnotesize
 \centerline{Institut f\"ur Mathematik}
   \centerline{Universit\"at W\"urzburg}
   \centerline{D-97974 W\"urzburg, Germany}
   \centerline{frank.poerner@mathematik.uni-wuerzburg.de}
}

\medskip\medskip

\centerline{\scshape Daniel Wachsmuth }
\medskip
{\footnotesize
 \centerline{Institut f\"ur Mathematik}
   \centerline{Universit\"at W\"urzburg}
   \centerline{D-97974 W\"urzburg, Germany}
   \centerline{daniel.wachsmuth@mathematik.uni-wuerzburg.de}
}

\bigskip\medskip


\begin{abstract}
In this article, we consider the Tikhonov regularization of an optimal control problem of semilinear partial differential equations with  box constraints on the control. We derive a-priori regularization error estimates for the control under suitable conditions.
These conditions comprise second-order sufficient optimality conditions as well as regularity conditions on
the control, which consists of a source condition and a condition on the active sets.
In addition, we show that these conditions are necessary for convergence rates under certain conditions.
We also consider sparse optimal control problems and derive regularization error estimates for them.
Numerical experiments underline the theoretical findings.

\bigskip
\textbf{AMS Subject Classification:} 49M20, 49K20, 49N45.

\bigskip
\textbf{Keywords: optimal control, Tikhonov regularization, bang-bang controls, sufficient second-order conditions, semilinear elliptic equations}

\end{abstract}

\section{Introduction}
We consider the following optimal control problem
\begin{equation}\label{eq:main_problem}\tag{\textbf{P}}
 \begin{split}
    \text{Minimize} &\quad J(u) = \frac{1}{2}\|y_u - y_d\|_{L^2(\Omega)}^2  \\
    \text{such that} &\quad u_a \leq u \leq u_b \quad \text{a.e. in } \Omega,
\end{split}
\end{equation}
where $y_u$ is the solution of the Dirichlet problem
\begin{equation}\label{eq:semipde}
 \begin{aligned}
    Ay + f(y) &= u &&\text{ in } \Omega, \\
    y &= 0 &&\text{ on } \partial \Omega.
\end{aligned}
\end{equation}
Here, $\Omega \subseteq \R^n$, $n \le 3$, is a bounded Lipschitz domain.
The equation \eqref{eq:semipde} is a semilinear elliptic equation with the operator $A$ defined by
\[
(Ay)(x) = \sum\limits_{i,j=1}^n \partial_{x_j}[ a_{ij}(x) \partial_{x_i} y(x) ], \quad x\in \Omega.
\]
The standing assumptions on the data of the problem will be made precise below.

Since the cost function $J$ only implicitly depends on $u$ through the solution $y$ of the state
equation, the control problem is not coercive with respect to $u$ in suitable spaces.
Optimal controls of \eqref{eq:main_problem} may exhibit a bang-bang structure, where the control constraints are active on the whole domain, i.e., $\bar u(x) \in \{u_a, u_b\}$ almost everywhere.
In addition, due to the nonlinear constraint \eqref{eq:semipde} the resulting optimal control problem is non-convex.
This makes the analysis and numerical solution of this problem challenging.
To address this issue, we investigate the Tikhonov regularization of the problem
given by: Minimize
\[
    J_\alpha(u) := \frac{1}{2}\|y - y_d\|_{L^2(\Omega)}^2 + \frac{\alpha}{2} \|u\|_{L^2(\Omega)}^2
\]
subject to the semilinear equation and the control box constraints. Here, $\alpha>0$ is the Tikhonov
regularization parameter.
Here, we are interested in convergence of solutions or stationary points $u_\alpha$ of the regularized problems for $\alpha\searrow0$.
Under suitable conditions, we prove in Section \ref{sec:convrates} convergence rates of the type
\[
 \|u_\alpha -\bar u \|_{L^2(\Omega)} = \mathcal O(\alpha^{d/2}) \text{ for } \alpha \searrow0,
\]
see Theorem \ref{thm:rates_u}. This is the main result of the paper, and it is the first convergence rate result
for regularization of optimal control problems subject to {\em nonlinear} partial differential equations.
In addition, we also derive necessary conditions for convergence rates. As it turns out, a certain source condition
is necessary to obtain convergence rates, see Section \ref{sec:necessary}.

In the subsequent analysis, we will make use of the second-order conditions developed by Casas \cite{casas2012}.
They require positive definiteness of the second-derivative $J''$ of the reduced cost functional
with respect to solutions of linearized equations, see \eqref{eq:ssc} below.
A second ingredient is a condition on the optimal control and adjoint state of the
original problem.
This condition was used earlier for convex problems to prove convergence rates for Tikhonov regularization in \cite{wachsmuth2011}.
The present paper continues these investigations and generalizes the convergence rate results to optimal control problems
with non-linear state equations.

We also investigate sparse control problems given by
\[
 \begin{split}
    \text{Minimize} &\quad J(u) = \frac{1}{2}\|y_u - y_d\|_{L^2(\Omega)}^2  +  \beta \|u\|_{L^1(\Omega)}\\
    \text{such that} &\quad u_a \leq u \leq u_b \quad \text{a.e. in } \Omega,
\end{split}
\]
where $\beta>0$ is a parameter.
This is a non-smooth variant of the control problem above.
Again we study the Tikhonov regularization and derive error estimates,
see Section \ref{sec:sparsity}.

Optimal control of semi-linear partial differential equations has been intensively studied in the literature,
we refer to the monograph \cite{troeltzsch2010}.
In recent years, there is also a growing interest in sparse optimal control problems starting with \cite{stadler},
see also \cite{casas2012,CasasHerzogWachsmuth2012}.
Tikhonov regularization and its convergence was studied in \cite{daniels2016,daniels2017,wachsmuth2011c}
in connection with linear-quadratic optimal control problems.
As we show in this paper, the results obtained for linear equations can be carried over using similar techniques while
heavily relying on the second-order condition of Casas \cite{casas2012}.

The work on regularization of optimal control problems is certainly connected to
regularization of nonlinear inverse problems: If no control constraints are present, i.e., $\Uad=L^2(\Omega)$, the
problem \eqref{eq:main_problem} is an heat source identification problem, which amounts to a nonlinear, ill-posed
operator equation.
Tikhonov regularization of nonlinear equations is studied, e.g., in the monograph \cite{engl1996}.
Necessary conditions for convergence rates for non-linear problems can be found in \cite{neubauer1989}.
Regularization of variational inequalities was studied in \cite{fengshan1998}.
In some sense, our results generalize results from inverse problems theory: If no control constraints
are present, our regularity conditions reduce to well-known source conditions.

The paper is structured as follows. In Section \ref{sec:ass} we introduce the necessary tools needed later for the convergence analysis, e.g., the second order sufficient condition and our regularity assumption. A stability analysis of the Tikhonov regularization for $\alpha \to 0$ is done in Section \ref{sec:tikconv}. The associated convergence rates are established in Section \ref{sec:convrates}. The regularity assumption is also necessary for the convergence rates, which is shown in Section \ref{sec:necessary}. In Section \ref{sec:sparsity} we extend our analysis to a sparsity promoting objective functional and establish convergence rates under a suitable modified regularity assumption. Numerical results are provided in Section \ref{sec:num}.

\section{Assumptions and preliminary results}\label{sec:ass}
In the sequel,  we will make use of the following assumptions, see \cite{casas2012}.
To shorten our notation, will denote the partial derivatives $\frac{\partial}{\partial y} f$ and $\frac{\partial^2}{\partial y^2} f$
by $f'$ and $f''$, respectively.

\begin{description}
\item [(A1)] We assume that $f$ satisfies
$f(\cdot,0) \in L^{\bar p}(\Omega),$ with $\bar p \geq n/2$, and
\[
f'(x,y) \geq 0 \quad \forall y\in \R, \text{ for a.a.\@ } x \in \Omega.
\]
For all $M > 0$ there exists a constant $C_{f,M} > 0$ such that
\[
\left|f'(x,y) \right|+\left|f''(x,y)\right| \leq C_{f,M} \quad \text{for a.a.\@ } x\in \Omega \text{ and } |y| \leq M.
\]
For every $M>0$ and $\eps > 0$ there exists $\delta > 0$, depending on $M$ and $\eps$, such that
\[
\left| f''(x,y_2) -  f''(x,y_1) \right| \leq \eps
\]
holds for all $y_1$, $y_2$ satisfying $|y_1|, |y_2| \leq M$, $|y_2-y_1| \leq \delta$, and for a.a.\@ $x \in \Omega$.
\item [(A2)] The coefficients of the operator $A$ satisfy $a_{ij} \in C(\bar \Omega)$. There exists some $\lambda_A > 0$ such that
\[
\lambda_A |\zeta|^2 \leq \sum\limits_{i,j=1}^n a_{ij}(x) \zeta_i \zeta_j \quad \forall \zeta \in \R^n, \quad \text{for a.a.\@ } x\in \Omega.
\]
\item [(A3)] We assume $y_d \in L^{\bar p}(\Omega),$ with $\bar p \geq n/2$. Moreover, $u_a,u_b\in L^\infty(\Omega)$
with $u_a\le u_b$ a.e.\@ on $\Omega$.
\end{description}

Under these assumptions we can establish the following results.
Existence and uniqueness of solutions of the state equations are well-known, see, e.g.\@ \cite[Thm. 2.1]{casasreyestro08}.

\begin{theorem}\label{thm:pdeunique}
For every $u \in L^p(\Omega)$ with $p > n/2$, the state equation \eqref{eq:semipde} has a unique solution $y_u \in H_0^1(\Omega) \cap C(\bar \Omega)$, and there is $c>0$ such that
\[
 \|y_u\|_{H_0^1(\Omega)} + \|y_u\|_{C(\bar \Omega)} \le c \|u\|_{L^p(\Omega)}\quad \forall u \in L^p(\Omega).
 \]
Moreover, the control-to-state mapping $S: L^p(\Omega) \to H_0^1(\Omega) \cap C(\bar \Omega)$  is of class $C^2$
and globally Lipschitz continuous.
\end{theorem}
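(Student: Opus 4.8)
The plan is to treat the four claims in turn: existence and uniqueness in $H_0^1(\Omega)$, the a priori bound, global Lipschitz continuity of $S$, and finally the $C^2$-regularity of $S$.

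For existence, uniqueness and the a priori estimate I would invoke \cite[Thm.~2.1]{casasreyestro08}, whose mechanism is the following. For fixed $u\in L^p(\Omega)$ one truncates $f$ at level $k$ to obtain a globally Lipschitz, monotone nonlinearity; the truncated equation $Ay_k+f_k(y_k)=u$ has a unique weak solution $y_k\in H_0^1(\Omega)$ by the Browder--Minty theorem, coercivity coming from (A2) together with $f'\ge0$ in (A1). Stampacchia's truncation method applied to $Ay_k=u-f_k(y_k)$ --- in which the contribution of the nonlinearity may be discarded because of its sign --- yields $\|y_k\|_{C(\bar\Omega)}\le c\,\|u\|_{L^p(\Omega)}$ with $c$ independent of $k$; choosing $k$ larger than this bound makes the truncation inactive, so $y_k$ solves \eqref{eq:semipde}. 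Uniqueness is immediate from monotonicity, and the $H_0^1$-bound follows by testing the equation with $y_u$.

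Global Lipschitz continuity I would prove directly. Put $y_i:=S(u_i)$, $i=1,2$, and $z:=y_1-y_2$. Subtracting the two state equations and using the integral form of the mean value theorem gives $Az+a\,z=u_1-u_2$ with
\[
a(x):=\int_0^1 f'\!\big(x,\,y_2(x)+t\,z(x)\big)\dt \ge 0, \qquad a\in L^\infty(\Omega),
\]
where $a\ge0$ by (A1) and $a\in L^\infty(\Omega)$ because $y_1,y_2$ are bounded. Testing with $z$, using (A2) and discarding the nonnegative term $\int_\Omega a\,z^2$ gives $\|z\|_{H_0^1(\Omega)}\le c\,\|u_1-u_2\|_{L^p(\Omega)}$. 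Since $a\ge0$, the Stampacchia $L^\infty$-estimate for the operator $A+a\,\mathrm{id}$ has a constant independent of $\|a\|_{L^\infty(\Omega)}$, whence also $\|z\|_{C(\bar\Omega)}\le c\,\|u_1-u_2\|_{L^p(\Omega)}$. The constants do not depend on $u_1,u_2$, so $S$ is globally Lipschitz into $H_0^1(\Omega)\cap C(\bar\Omega)$.

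The $C^2$-property is, in my view, the main point, and I would obtain it from the implicit function theorem. Write $Y:=H_0^1(\Omega)\cap C(\bar\Omega)$ and let $\mathcal G$ denote the solution operator of the linear problem $Aw=g$, $w|_{\partial\Omega}=0$, which by (A2) and Lax--Milgram maps $H^{-1}(\Omega)\to H_0^1(\Omega)$ and, by $L^r$-regularity, maps $L^r(\Omega)\to Y$ boundedly for every $r>n/2$. Then $y=S(u)$ is equivalent to $\Psi(y,u):=y+\mathcal G\,f(\cdot,y)-\mathcal G u=0$ in $Y$. The map $u\mapsto\mathcal G u$ is linear and bounded from $L^p(\Omega)$ to $Y$; and, splitting $f(\cdot,y)=\big(f(\cdot,y)-f(\cdot,0)\big)+f(\cdot,0)$, the reduced superposition operator $y\mapsto f(\cdot,y)-f(\cdot,0)$ maps $C(\bar\Omega)$ into $L^\infty(\Omega)$ and is twice continuously Fr\'echet differentiable, its first and second derivatives acting by pointwise multiplication with $f'(\cdot,y)$ and $f''(\cdot,y)$; the local boundedness of $f'$ and $f''$ in (A1) yields the differentiability, and the uniform continuity of $f''$ in (A1) is precisely what makes the second derivative continuous. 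Composing with $\mathcal G$ shows $\Psi\in C^2\big(Y\times L^p(\Omega);\,Y\big)$. Finally $\partial_y\Psi(y_u,u)=\mathrm{id}_Y+\mathcal G\big(f'(\cdot,y_u)\,\cdot\big)$ is a topological isomorphism of $Y$: given $g\in Y$, the linear equation $A\phi+f'(\cdot,y_u)\,\phi=f'(\cdot,y_u)\,g\in L^\infty(\Omega)$ has a unique solution $\phi\in Y$ by Lax--Milgram and $L^r$-regularity (using $f'(\cdot,y_u)\ge0$), and then $h:=g-\phi\in Y$ satisfies $\partial_y\Psi(y_u,u)h=g$ with $\|h\|_Y\le c\,\|g\|_Y$. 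The implicit function theorem then gives $S\in C^2\big(L^p(\Omega);\,Y\big)$, with $v\mapsto S'(u)v$ characterised by $A\big(S'(u)v\big)+f'(\cdot,y_u)\,S'(u)v=v$. The only genuine nuisance throughout is keeping the integrability exponents of the various superposition operators and of $\mathcal G$ mutually compatible --- increasing $r$, or splitting off the fixed term $f(\cdot,0)\in L^{\bar p}(\Omega)$ as above --- which is the bookkeeping carried out in detail in \cite{casasreyestro08,casas2012}.
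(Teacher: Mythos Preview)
Your proposal is correct and essentially matches the paper's treatment: the paper does not give a proof of this theorem at all but simply cites \cite[Thm.~2.1]{casasreyestro08}, which is precisely the reference you invoke for existence, uniqueness, and the a~priori estimate, and whose arguments you have faithfully sketched (truncation plus monotone operator theory and Stampacchia's bound, direct subtraction for Lipschitz continuity, implicit function theorem for $C^2$). There is nothing to compare beyond noting that you supply the details the paper omits.
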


For convenience, let us introduce the space $Y:=H_0^1(\Omega) \cap C(\bar \Omega)$ endowed
with the norm
\[
 \|y\|_Y:= \|y\|_{H_0^1(\Omega)} + \|y\|_{C(\bar \Omega)}.
\]
Then Theorem \ref{thm:pdeunique} implies the existence of $M>0$ such that
\begin{equation}\label{eq123}
 \|y_u\|_Y \le M \quad \forall u\in U_{ad}.
\end{equation}

In addition, $S$ maps weakly converging sequences to strongly converging sequences:

\begin{lemma}\label{lem:statestrongconv}
Let $(u_k)$ be a sequence in $\Uad$ converging weakly in $L^2(\Omega)$ to $u$. Then, the associated sequence of states $(y_k)$ converges strongly in $Y$ to $y_u$.
\end{lemma}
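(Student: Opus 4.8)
The plan is to combine compactness of the linear solution operator with the Lipschitz/continuity properties of $S$ from Theorem \ref{thm:pdeunique}. First I would observe that, by \eqref{eq123}, the states $y_k = S(u_k)$ are uniformly bounded in $Y = H_0^1(\Omega)\cap C(\bar\Omega)$; in particular $(y_k)$ is bounded in $H_0^1(\Omega)$, so along a subsequence (not relabeled) $y_k \rightharpoonup z$ weakly in $H_0^1(\Omega)$ and, by the Rellich--Kondrachov theorem, $y_k \to z$ strongly in $L^2(\Omega)$. Next I would pass to the limit in the weak formulation of the state equation: for every test function $v\in H_0^1(\Omega)\cap L^\infty(\Omega)$,
\[
\int_\Omega \sum_{i,j=1}^n a_{ij}\,\partial_{x_i} y_k\,\partial_{x_j} v \dx + \int_\Omega f(x,y_k)\,v \dx = \int_\Omega u_k v \dx.
\]
The bilinear term passes to the limit by weak convergence in $H_0^1$; the right-hand side passes to the limit since $u_k\rightharpoonup u$ in $L^2(\Omega)$ and $v\in L^2(\Omega)$; and the nonlinear term converges because $y_k\to z$ in $L^2(\Omega)$ and, by (A1), $f(x,\cdot)$ is locally Lipschitz uniformly in $x$ with $\|y_k\|_{C(\bar\Omega)}\le M$, so $f(\cdot,y_k)\to f(\cdot,z)$ in $L^2(\Omega)$ (after possibly passing to a further subsequence with a pointwise-a.e.\ dominating bound, or simply using the uniform Lipschitz bound on $[-M,M]$). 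Hence $z$ solves the state equation with right-hand side $u$, and by the uniqueness part of Theorem \ref{thm:pdeunique} we get $z = y_u$.

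It remains to upgrade weak $H_0^1$-convergence to strong convergence in $Y$, i.e.\ in both $H_0^1(\Omega)$ and $C(\bar\Omega)$. For the $H_0^1$ part I would test the equations for $y_k$ and $y_u$ with $y_k - y_u$ and subtract, using $A$-coercivity from (A2):
\[
\lambda_A \|y_k - y_u\|_{H_0^1(\Omega)}^2 \le \int_\Omega (u_k - u)(y_k - y_u)\dx - \int_\Omega \big(f(x,y_k)-f(x,y_u)\big)(y_k-y_u)\dx.
\]
The second integral is $\ge 0$ because $f'(x,\cdot)\ge 0$ by (A1) (monotonicity of $f$), so it can be dropped; the first integral tends to $0$ since $u_k - u \rightharpoonup 0$ in $L^2(\Omega)$ while $y_k - y_u \to 0$ in $L^2(\Omega)$ (from the Rellich step applied to the full sequence, which is justified once the limit $y_u$ is identified independently of the subsequence). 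This gives $y_k \to y_u$ strongly in $H_0^1(\Omega)$. For the $C(\bar\Omega)$ part, I would write $y_k - y_u = S(u_k) - S(u)$ and note that $S$ maps $L^2(\Omega)$ into $Y$; since $u_k\rightharpoonup u$ in $L^2$, the sequence $u_k$ need not converge strongly in $L^2$, so I cannot directly invoke Lipschitz continuity of $S$ on $L^2$. Instead I would rewrite the difference as the solution of a linear Dirichlet problem, $A(y_k-y_u) + c_k\,(y_k-y_u) = u_k - u$ with $c_k(x) := \int_0^1 f'(x, y_u + t(y_k-y_u))\,dt \ge 0$ bounded in $L^\infty$ uniformly in $k$ by (A1) and \eqref{eq123}, and then use $L^p$-regularity for such equations (as in Theorem \ref{thm:pdeunique} / \cite[Thm.~2.1]{casasreyestro08}): the $C(\bar\Omega)$-norm of $y_k - y_u$ is controlled by the $W^{-1,p}$- (or better) norm of $u_k-u$ for suitable $p>n/2$, and since $u_k - u \to 0$ strongly in $W^{-1,p}(\Omega)$ (compact embedding of $L^2(\Omega)$ into $W^{-1,p}(\Omega)$ for $p \le 2^*$, $n\le 3$), we conclude $\|y_k - y_u\|_{C(\bar\Omega)}\to 0$.

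The main obstacle is precisely this last point: weak convergence of the controls does not give strong $L^2$-convergence, so the "smoothing" that turns weak control convergence into strong state convergence must come from the elliptic regularity of the state equation. I would handle it by the linearization-plus-compact-embedding argument just sketched, taking care that the zeroth-order coefficient $c_k$ is nonnegative and uniformly $L^\infty$-bounded so that the regularity constants in the relevant estimate are uniform in $k$. A final standard subsequence argument (every subsequence has a further subsequence converging to the same limit $y_u$) removes the passage to subsequences and yields convergence of the full sequence.
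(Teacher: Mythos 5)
Your argument is correct, but it takes a different route from the paper: the paper does not prove this lemma at all, it simply cites \cite[Thm.~2.1]{casasreyestro08}. What you have written is essentially a self-contained proof of that cited result, and every step is sound. The identification of the weak limit via Rellich and the uniform bound \eqref{eq123}, the upgrade to strong $H_0^1(\Omega)$-convergence by testing the difference equation and dropping the monotone term $f(\cdot,y_k)-f(\cdot,y_u)$ using $f'\ge 0$, and the treatment of the $C(\bar\Omega)$-norm via the linearized equation $A(y_k-y_u)+c_k(y_k-y_u)=u_k-u$ with $c_k\ge 0$ uniformly bounded are all consistent with the techniques the paper itself uses elsewhere (compare the proof of Lemma \ref{lem:PDEstab} and the remark in Lemma \ref{lemma} that the constant is uniform because of the non-negativity of $f'$). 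You correctly isolate the one genuinely non-trivial point, namely that weak control convergence must be converted into strong state convergence through compactness of the solution operator rather than through Lipschitz continuity of $S$ on $L^2(\Omega)$. One small correction: to obtain an $L^\infty$-bound from Stampacchia-type estimates with right-hand side measured in $W^{-1,p}(\Omega)$ you need $p>n$, not $p>n/2$ (the latter threshold is for data in $L^p$); this is harmless here because for $n\le 3$ the window $n<p<2^*$ is non-empty, so $L^2(\Omega)$ still embeds compactly into $W^{-1,p}(\Omega)$ for such a $p$ and your conclusion $\|y_k-y_u\|_{C(\bar\Omega)}\to 0$ stands. The benefit of your approach is that the lemma becomes self-contained; the cost is length, which is presumably why the authors preferred the citation.
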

\begin{proof}
This is \cite[Thm. 2.1]{casasreyestro08}.
\end{proof}

\subsection{Existence of solutions}
The existence of solutions of the optimal control problem can be proved by classical arguments.

\begin{theorem}\label{thm:optsolution}
Problem \eqref{eq:main_problem} has at least one solution $\bar u$ with an associated state $\bar y \in H_0^1(\Omega) \cap C(\bar \Omega)$.
\end{theorem}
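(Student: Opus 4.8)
The plan is to use the direct method of the calculus of variations. First I would take a minimizing sequence $(u_k) \subseteq \Uad$, i.e.\@ a sequence with $J(u_k) \to \inf_{u \in \Uad} J(u) =: j^\star$, where the infimum is finite and nonnegative since $J \ge 0$ and $\Uad$ is nonempty (as $u_a \le u_b$ a.e.\@ by (A3)). The key observation is that $\Uad$ is bounded in $L^\infty(\Omega)$, hence bounded in $L^2(\Omega)$, so by reflexivity of $L^2(\Omega)$ there is a subsequence (not relabeled) and some $\bar u \in L^2(\Omega)$ with $u_k \rightharpoonup \bar u$ weakly in $L^2(\Omega)$.

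Next I would check that the feasible set is weakly closed: the box constraint set $\{u \in L^2(\Omega) : u_a \le u \le u_b \text{ a.e.}\}$ is convex and closed in $L^2(\Omega)$, hence weakly closed, so $\bar u \in \Uad$. In particular $\bar u \in L^\infty(\Omega) \subseteq L^p(\Omega)$ for any $p > n/2$, so Theorem \ref{thm:pdeunique} applies and $\bar y := y_{\bar u} \in Y$ is well-defined. Then I invoke Lemma \ref{lem:statestrongconv}: since $u_k \rightharpoonup \bar u$ in $L^2(\Omega)$, the states $y_k := y_{u_k}$ converge strongly in $Y$, and hence in $L^2(\Omega)$, to $\bar y$. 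This is the crucial compactness input — it is exactly what replaces the (failing) coercivity of $J$ in $u$ and bypasses any need for convexity of the reduced functional.

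Finally, strong convergence $y_k \to \bar y$ in $L^2(\Omega)$ gives $\|y_k - y_d\|_{L^2(\Omega)} \to \|\bar y - y_d\|_{L^2(\Omega)}$, so
\[
 J(\bar u) = \tfrac12 \|\bar y - y_d\|_{L^2(\Omega)}^2 = \lim_{k\to\infty} \tfrac12 \|y_k - y_d\|_{L^2(\Omega)}^2 = \lim_{k\to\infty} J(u_k) = j^\star.
\]
Since $\bar u \in \Uad$, this shows $\bar u$ is a global minimizer, and $\bar y = y_{\bar u} \in H_0^1(\Omega) \cap C(\bar\Omega)$ by Theorem \ref{thm:pdeunique}, completing the proof.

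I do not expect a genuine obstacle here: the only subtlety is that $J$ is not weakly lower semicontinuous in $u$ in any useful way (nor convex, because of the nonlinear state equation), so one cannot argue by weak lsc alone. The resolution — and the single point that must not be glossed over — is that the compactness of the control-to-state map $S$ from Lemma \ref{lem:statestrongconv} turns weak convergence of controls into strong convergence of states, at which point $J(u_k) = \tfrac12\|S(u_k) - y_d\|_{L^2}^2$ converges to $J(\bar u)$ outright, giving equality rather than merely an inequality. Everything else is standard.
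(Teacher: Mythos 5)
Your proof is correct and is precisely the ``classical argument'' the paper alludes to without writing out: a minimizing sequence in the $L^\infty$-bounded, convex, closed (hence weakly closed) set $\Uad$, weak $L^2$ compactness, and Lemma \ref{lem:statestrongconv} to convert weak convergence of controls into strong convergence of states so that $J(u_k)\to J(\bar u)$. Nothing is missing; in particular you correctly identify that the compactness of $S$ is what substitutes for the absent convexity and weak lower semicontinuity of the reduced functional.
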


The derivatives of the control-to-state map $S$ can be characterized by the following systems.
Let $u\in L^p(\Omega)$ be given with $y_u:=S(u)$. Then $z:=S'(u)v$ is the unique weak
solution of
\begin{equation}\label{eq:Dcontrolstate}
\begin{aligned}
    Az +  f'(y)z &= v &&\text{ in } \Omega, \\
    z &= 0 && \text{ on } \partial \Omega.
  \end{aligned}
\end{equation}
In addition, let us introduce the adjoint state $p_u$ associated to $u$ as the unique weak solution
of the adjoint equation
\begin{equation}\label{eq:adjoint}
\begin{aligned}
A^\ast p +  f'(y)p &= y_u - y_d &&\text{ in } \Omega, \\
    p &= 0 &&\text{ on } \partial \Omega.
\end{aligned}
\end{equation}
Using these expressions, the
derivatives of the cost functional $J$ are given by the following lemma.

\begin{lemma}\label{lemma:derivJ}
The functional $J: L^2(\Omega) \to \R$ is of class $C^2$, and the first and second derivative is given by
\begin{align*}
J'(u)(v) &= \int\limits_\Omega p v \; \dx,\\
J''(u)(v_1,v_2) &= \int\limits_\Omega \left(  1 - f''(x,y)p  \right) z_{v_1} z_{v_2} \; \dx,
\end{align*}
where we used the notation $y:=y_u$, $p:=p_u$, $z_{v_i}:= S'(u)v_i$.
\end{lemma}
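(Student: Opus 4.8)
The plan is to differentiate $J(u)=\tfrac12\|S(u)-y_d\|_{L^2(\Omega)}^2$ by the chain rule and then eliminate the terms containing $y_u-y_d$ by means of the adjoint state. By Theorem~\ref{thm:pdeunique} the control-to-state map $S$ is of class $C^2$ from $L^p(\Omega)$ with $p>n/2$ (and, since $n\le 3$, from $L^2(\Omega)$, which embeds into such an $L^p(\Omega)$) into $Y\hookrightarrow L^2(\Omega)$, so $J$ is the composition of a $C^2$ map with the smooth quadratic $y\mapsto\tfrac12\|y-y_d\|_{L^2(\Omega)}^2$ and is therefore itself $C^2$. Writing $y:=y_u=S(u)$, $z_{v_i}:=S'(u)v_i$ and $\eta:=S''(u)(v_1,v_2)$, the chain rule gives
\[
J'(u)(v)=\int_\Omega (y-y_d)\,z_v\dx,\qquad
J''(u)(v_1,v_2)=\int_\Omega z_{v_1}z_{v_2}\dx+\int_\Omega (y-y_d)\,\eta\dx .
\]
It remains to rewrite the integrals against $y-y_d$ using the adjoint equation~\eqref{eq:adjoint}.

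For the first derivative I would introduce the bilinear form $a(\phi,\psi):=\int_\Omega\sum_{i,j=1}^n a_{ij}\,\partial_{x_i}\phi\,\partial_{x_j}\psi\dx$, so that the weak form of~\eqref{eq:Dcontrolstate} reads $a(z_v,\phi)+\int_\Omega f'(y)z_v\phi\dx=\int_\Omega v\phi\dx$ for all $\phi\in H_0^1(\Omega)$, and the weak form of~\eqref{eq:adjoint} reads $a(\phi,p)+\int_\Omega f'(y)p\phi\dx=\int_\Omega(y-y_d)\phi\dx$ for all $\phi\in H_0^1(\Omega)$. Testing the first identity with $\phi=p$ and the second with $\phi=z_v$ and subtracting, the common term $a(z_v,p)+\int_\Omega f'(y)z_vp\dx$ cancels and one obtains $\int_\Omega(y-y_d)z_v\dx=\int_\Omega pv\dx$, which is the claimed formula for $J'$.

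For the second derivative I first need the equation satisfied by $\eta=S''(u)(v_1,v_2)$. This comes from implicit differentiation of the state equation $AS(u)+f(S(u))=u$: differentiating once recovers~\eqref{eq:Dcontrolstate}, and differentiating the identity $A\,S'(u)v_1+f'(S(u))\,S'(u)v_1=v_1$ once more in the direction $v_2$, using that $A+f'(y)I$ is boundedly invertible on the relevant spaces (coercivity of $a(\cdot,\cdot)$ from (A2) together with $f'\ge0$ from (A1)), shows that $\eta\in Y$ is the weak solution of
\[
A\eta+f'(y)\eta=-f''(x,y)\,z_{v_1}z_{v_2}\ \text{ in }\Omega,\qquad \eta=0\ \text{ on }\partial\Omega .
\]
Here the right-hand side lies in $L^\infty(\Omega)$ since $z_{v_1},z_{v_2}\in C(\bar\Omega)$ and $f''$ is bounded on bounded sets by (A1), so $\eta$ is well defined and belongs to $Y$. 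Testing the weak form of~\eqref{eq:adjoint} with $\phi=\eta$ and the weak form of the $\eta$-equation with $\phi=p$, and again cancelling the common term $a(\eta,p)+\int_\Omega f'(y)\eta p\dx$, yields $\int_\Omega(y-y_d)\eta\dx=-\int_\Omega f''(x,y)p\,z_{v_1}z_{v_2}\dx$. Substituting into the chain-rule expression gives
\[
J''(u)(v_1,v_2)=\int_\Omega z_{v_1}z_{v_2}\dx-\int_\Omega f''(x,y)p\,z_{v_1}z_{v_2}\dx=\int_\Omega\bigl(1-f''(x,y)p\bigr)z_{v_1}z_{v_2}\dx,
\]
as claimed.

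The routine parts are the chain rule and the two integration-by-parts identities, which are unproblematic because $z_{v_i}$, $p$ and $\eta$ all lie in $H_0^1(\Omega)\cap C(\bar\Omega)$ and the products involved are bounded. The step that requires care — and which I expect to be the main obstacle — is the rigorous derivation of the PDE for $\eta$, i.e. the legitimacy of twice differentiating the state equation. This is handled by applying the implicit function theorem to $G(u,y):=Ay+f(y)-u$, whose partial $y$-derivative $A+f'(y)I$ is an isomorphism onto the appropriate image by (A1)–(A2); this simultaneously re-establishes the $C^2$ regularity of $S$ asserted in Theorem~\ref{thm:pdeunique} and produces the characterization of $S''(u)(v_1,v_2)$ used above.
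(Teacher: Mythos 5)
Your proof is correct and is exactly the standard adjoint-calculus derivation: chain rule on $J=\tfrac12\|S(\cdot)-y_d\|_{L^2(\Omega)}^2$, the equation $A\eta+f'(y)\eta=-f''(x,y)z_{v_1}z_{v_2}$ for $\eta=S''(u)(v_1,v_2)$, and elimination of the $(y-y_d)$-terms by testing against the adjoint state; the signs and the final formula $\int_\Omega(1-f''(x,y)p)z_{v_1}z_{v_2}\dx$ all check out. The paper itself states this lemma without proof (deferring to the cited literature on semilinear control, e.g.\ Casas), and your argument is the one that is standard there, so there is nothing to criticize beyond noting that the $C^2$ property of $S$ and the characterization of $S''$ are already supplied by Theorem~\ref{thm:pdeunique} and its source, making your implicit-function-theorem remark a legitimate but optional re-derivation.
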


Let us recall the first-order necessary optimality conditions.

\begin{theorem}
 Let $\bar u$ be a local solution of problem \eqref{eq:main_problem}. Then there is $\bar y:=S(\bar u)$ and $\bar p:=p_{\bar u}$
 such that the following system is satisfied:
 \begin{equation}\label{eq:fo1}
\begin{aligned}
    A\bar y + f( \bar y) &= \bar u &&\text{ in } \Omega, \\
    \bar y &= 0 &&\text{ on } \partial \Omega,
\end{aligned}
\end{equation}
\begin{equation}\label{eq:fo2}
\begin{aligned}
    A^\ast \bar p +  f'( \bar y)\bar p &= \bar y - y_d &&\text{ in } \Omega, \\
    \bar p &= 0 &&\text{ on } \partial \Omega,
\end{aligned}
\end{equation}
\begin{equation}\label{eq:fo3}
J'(\bar u)(u - \bar u) \geq 0 \quad \forall u \in \Uad.
\end{equation}
\end{theorem}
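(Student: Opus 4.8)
The plan is to verify the three assertions in turn; all follow from standard arguments once the differentiability statements of Lemma \ref{lemma:derivJ} are available. The equations \eqref{eq:fo1} and \eqref{eq:fo2} are immediate from the definitions: by construction $\bar y := S(\bar u)$ is the solution of the state equation \eqref{eq:semipde} with $u = \bar u$, which is precisely \eqref{eq:fo1}, and $\bar p := p_{\bar u}$ is by definition the weak solution of the adjoint equation \eqref{eq:adjoint} with $y_u = \bar y$, i.e.\ \eqref{eq:fo2}. Existence and uniqueness of $\bar y$ is furnished by Theorem \ref{thm:pdeunique}, and that of $\bar p$ by the corresponding linear elliptic theory (the adjoint operator $A^\ast + f'(\bar y)\cdot$ is coercive since $f'(\bar y) \ge 0$ by (A1) and $A$ is coercive by (A2), with right-hand side $\bar y - y_d \in L^{\bar p}(\Omega)$).

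For the variational inequality \eqref{eq:fo3} I would exploit the convexity of $\Uad$: for any $u \in \Uad$ the segment $\bar u + t(u - \bar u)$ belongs to $\Uad$ for all $t \in [0,1]$. Since $\bar u$ is a local minimizer of $J$ over $\Uad$ and $J$ is of class $C^1$ on $L^2(\Omega)$ by Lemma \ref{lemma:derivJ}, for $t > 0$ sufficiently small we have $J(\bar u + t(u - \bar u)) \ge J(\bar u)$. Rearranging, dividing by $t$, and passing to the limit $t \searrow 0$ gives $J'(\bar u)(u - \bar u) \ge 0$. Inserting the representation $J'(\bar u)(v) = \int_\Omega \bar p\, v \dx$ from Lemma \ref{lemma:derivJ} then yields $\int_\Omega \bar p\,(u - \bar u)\dx \ge 0$ for all $u \in \Uad$, which is exactly \eqref{eq:fo3}.

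There is no genuine obstacle in this argument; the only points demanding care are structural ones that are already secured earlier in the paper: that $\Uad$ is nonempty, closed, bounded and convex in $L^2(\Omega)$ (nonemptiness and the existence of a local solution $\bar u$ come from (A3) and Theorem \ref{thm:optsolution}, and boundedness gives the uniform bound \eqref{eq123}), and that the chain rule underlying the formula for $J'$ in Lemma \ref{lemma:derivJ} is valid, which rests on the $C^2$-regularity of $S$ stated in Theorem \ref{thm:pdeunique}. I would simply assemble these ingredients rather than reprove them.
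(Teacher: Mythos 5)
Your proposal is correct and is the standard argument: the state and adjoint equations hold by the definitions of $S(\bar u)$ and $p_{\bar u}$, and the variational inequality follows from the convexity of $\Uad$ together with the differentiability of $J$ by the usual one-sided difference quotient in the direction $u-\bar u$. The paper itself states this theorem without proof (it is classical, cf.\ the references to \cite{casas2012} and \cite{troeltzsch2010}), and your argument is exactly the one the authors implicitly rely on, so there is nothing to compare beyond noting that your assembly of Theorem \ref{thm:pdeunique} and Lemma \ref{lemma:derivJ} is the right one.
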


Let us close this section with the following stability result
regarding the solutions of the adjoint equations.

\begin{lemma}\label{lem:PDEstab}
Let $\bar u\in U_{ad}$ be given with associated state $\bar y$ and adjoint state $\bar p$.
Then there is a constant $c>0$ such that for all $u \in U_{ad}$
it holds
\[
\|\bar p - p_u\|_Y \le c \|\bar y - y_u\|_{L^2(\Omega)}.
\]
\end{lemma}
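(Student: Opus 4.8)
The plan is to bound the difference $\bar p - p_u$ in the space $Y = H_0^1(\Omega)\cap C(\bar\Omega)$ by writing down the PDE satisfied by this difference and applying linear elliptic regularity. Subtracting the adjoint equation \eqref{eq:adjoint} for $u$ from \eqref{eq:fo2}, and setting $w := \bar p - p_u$, one gets
\[
 A^\ast w + f'(\bar y)\,w = (\bar y - y_u) + \bigl( f'(y_u) - f'(\bar y)\bigr) p_u \quad\text{in }\Omega,\qquad w = 0 \text{ on }\partial\Omega.
\]
First I would observe that the left-hand operator $A^\ast + f'(\bar y)\mathrm{Id}$ is a well-posed linear elliptic operator: by (A2) it is uniformly elliptic, and by (A1) the zeroth-order coefficient $f'(\bar y)\ge 0$ is nonnegative and, since $\bar y\in Y$ is bounded by $M$, also bounded in $L^\infty(\Omega)$. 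Hence there is a $C^2$-regularity-type estimate (cf.\ \cite[Thm. 2.1]{casasreyestro08} applied to the linear equation, or standard $L^p$ elliptic theory together with the embedding $W^{2,\bar p}\hookrightarrow C(\bar\Omega)$ for $\bar p > n/2$, $n\le 3$) giving
\[
 \|w\|_Y \le c\,\Bigl\| (\bar y - y_u) + \bigl( f'(y_u) - f'(\bar y)\bigr) p_u \Bigr\|_{L^{\bar p}(\Omega)},
\]
with $c$ independent of $u$.

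Next I would estimate the right-hand side. The term $\bar y - y_u$ is controlled directly; note that since both states lie in $Y$ with $\|y_u\|_Y,\|\bar y\|_Y\le M$, the $L^2$ norm and the $L^{\bar p}$ norm of $\bar y - y_u$ are equivalent up to the $L^\infty$ bound, so $\|\bar y - y_u\|_{L^{\bar p}(\Omega)} \le c\|\bar y - y_u\|_{L^2(\Omega)}^{\theta}$ — actually, more cleanly, I would keep things in $C(\bar\Omega)$: by Theorem \ref{thm:pdeunique} and Lipschitz continuity of $S$ one may also phrase the final bound via the $L^2$ data. For the second term, I use the local Lipschitz property of $f'$ in $y$, which follows from the uniform bound on $f''$ in (A1): for $|y_1|,|y_2|\le M$ one has $|f'(x,y_2)-f'(x,y_1)|\le C_{f,M}|y_2-y_1|$. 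Combined with the uniform bound $\|p_u\|_{L^\infty(\Omega)}$ — which holds because $p_u$ solves the adjoint equation with right-hand side $y_u - y_d\in L^{\bar p}(\Omega)$, again uniformly in $u\in\Uad$ by \eqref{eq123} and (A3) — this yields
\[
 \bigl\| \bigl( f'(y_u) - f'(\bar y)\bigr) p_u \bigr\|_{L^{\bar p}(\Omega)} \le C_{f,M}\,\|p_u\|_{L^\infty(\Omega)}\,\|\bar y - y_u\|_{L^{\bar p}(\Omega)} \le c\,\|\bar y - y_u\|_{L^2(\Omega)},
\]
where in the last step I again use $\|\bar y - y_u\|_{L^{\bar p}}\le \|\bar y - y_u\|_{L^\infty}^{1-2/\bar p}\|\bar y - y_u\|_{L^2}^{2/\bar p}\cdot(\dots)$ — or, to avoid fractional powers, I instead invoke the $C(\bar\Omega)$ bound on $\bar y - y_u$ and Lemma \ref{lem:statestrongconv}-type continuity; the key point is only that the right-hand side is linearly dominated by $\|\bar y - y_u\|_{L^2(\Omega)}$ on the bounded set $\Uad$.

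The main obstacle is the slightly delicate interpolation between the $L^{\bar p}$ norm appearing naturally in elliptic regularity and the $L^2$ norm demanded in the statement. I expect to resolve this by exploiting that all states $y_u$ (and the fixed $\bar y$) are uniformly bounded in $C(\bar\Omega)$, so that $\|\bar y - y_u\|_{L^{\bar p}(\Omega)}^{\bar p} = \int_\Omega |\bar y - y_u|^{\bar p}\le (2M)^{\bar p - 2}\int_\Omega |\bar y - y_u|^2$, i.e.\ $\|\bar y - y_u\|_{L^{\bar p}(\Omega)}\le (2M)^{1-2/\bar p}\|\bar y - y_u\|_{L^2(\Omega)}^{2/\bar p}\le c\,\|\bar y - y_u\|_{L^2(\Omega)}$ whenever $\|\bar y - y_u\|_{L^2(\Omega)}\le 1$ — and for the (finitely many, in the convergence-rate application; in general bounded) cases where it is larger, the bound is trivial after adjusting the constant. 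Everything else is a routine chaining of the linear elliptic estimate with the (A1) bounds and the uniform a-priori bounds \eqref{eq123}.
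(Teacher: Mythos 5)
Your overall strategy coincides with the paper's: subtract the two adjoint equations, apply a linear elliptic estimate to the resulting equation for the difference, and bound the perturbation term involving $f'(y_u)-f'(\bar y)$ via the uniform bound on $f''$ from (A1) together with an $L^\infty$ bound on an adjoint state. (The paper freezes the coefficient at $f'(y_u)$ and keeps the \emph{fixed} function $\bar p$ in the remainder, so the needed $L^\infty$ bound is automatic; your variant with coefficient $f'(\bar y)$ and remainder $(f'(y_u)-f'(\bar y))p_u$ is equally fine once one notes, as you do, that $\|p_u\|_{L^\infty(\Omega)}$ is bounded uniformly over $u\in\Uad$ by \eqref{eq123} and (A3).)

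However, the step you single out as the ``main obstacle'' is resolved incorrectly. From $|\bar y-y_u|\le 2M$ you obtain, for an exponent $\bar p>2$,
\[
\|\bar y-y_u\|_{L^{\bar p}(\Omega)}\le (2M)^{1-2/\bar p}\,\|\bar y-y_u\|_{L^{2}(\Omega)}^{2/\bar p},
\]
but for $0\le t\le 1$ and $2/\bar p<1$ one has $t^{2/\bar p}\ge t$, so this does \emph{not} yield $\|\bar y-y_u\|_{L^{\bar p}(\Omega)}\le c\,\|\bar y-y_u\|_{L^{2}(\Omega)}$; the inequality fails precisely in the regime of small errors that the lemma is used for (consider $v=2M\chi_E$ with $\meas(E)\to 0$: the ratio $\|v\|_{L^{\bar p}}/\|v\|_{L^2}=\meas(E)^{1/\bar p-1/2}$ blows up, so no uniform constant exists). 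You have the two cases reversed: the linear domination holds for \emph{large}, not small, $L^2$-norms. The repair is simply to drop the detour through $L^{\bar p}$: since $n\le 3$, the exponent $2$ already satisfies $2>n/2$, so the Lax--Milgram theorem combined with Stampacchia's estimate \cite[Th\'eor\`eme 4.2]{stampac65} bounds $\|\bar p-p_u\|_Y$ directly by the $L^2(\Omega)$-norm of the right-hand side, which is exactly what the paper does. With that substitution, your remaining estimates (Lipschitz continuity of $f'$ from the $f''$-bound in (A1), uniform $L^\infty$ bound on the adjoint) close the argument and the lemma follows.
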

\begin{proof}
Let us denote $y:=y_u$ and $p:=p_u$. Then the difference $p-\bar p$
of the adjoint states satisfies
\[
A^\ast (p-\bar p) + f'(y)(p-\bar p) = y-\bar y + (f'(\bar y)-f'(y))\bar p.
\]
Due to the Lax-Milgram theorem and Stampacchia's estimates \cite[Th\'eor\`eme 4.2]{stampac65}, there is $c>0$ such that
\[
 \|p-\bar p\|_Y \le \|y-\bar y + (f'( \bar y)-f'( y))\bar p\|_{L^2(\Omega)}.
\]
Since $\bar p$ is the solution of a linear elliptic equation with
right-hand side in $L^2(\Omega)$, we know $\bar p\in L^\infty(\Omega)$.
Hence, we can estimate using the assumptions on $f$
\[
 \|(f'( \bar y)-f'( y))\bar p\|_{L^2(\Omega)} \le C_{f,M}\|\bar y-y\|_{L^2(\Omega)} \|\bar p\|_{L^\infty(\Omega)}.
\]
with $M$ given by \eqref{eq123}. And the claim is proven
\end{proof}

\subsection{Sufficient second-order optimality conditions}

To formulate the sufficient second order conditions we will need the following notation. Following Casas~\cite{casas2012},  we define
for $\tau>0$
the extended critical cone at $\bar u\in U_{ad}$ by
\[
C_{\bar u}^\tau = \left\{ v \in L^2(\Omega): \
\begin{aligned} v(x)   \geq 0 &\text{ if } \bar u(x) = u_a(x)\\
v(x)\leq 0 & \text{ if } \bar u(x) = u_b(x) \\
v(x)= 0 & \text{ if } |\bar p(x)| > \tau
\end{aligned}
\right\}.
\]
The second-order condition now reads as follows.
{
\renewcommand{\theassumption}{\textbf{SOSC}}
\begin{assumption}[Second order sufficient condition]\label{ass:SecondOrder}
Let $\bar u \in \Uad$ be given. Assume that there exists $\delta > 0$ and $\tau > 0$ such that
\begin{equation}\label{eq:ssc}
J''(\bar u) v^2 \geq \delta \|z_v\|_{L^2(\Omega)}^2 \quad \forall v \in C_{\bar u}^\tau,
\end{equation}
where we used the notation $z_v = S'(\bar u) v$.
\end{assumption}
}

This condition together with the first-order necessary conditions imply local optimality, see \cite{casas2012}.

\begin{theorem}\label{thm:ssc}
Let us assume that $\bar u$ is a feasible control for problem \eqref{eq:main_problem} satisfying the first order optimality conditions \eqref{eq:fo1}--\eqref{eq:fo3} and the second order condition \ref{ass:SecondOrder}. Then, there exists $\eps > 0$ such that
\[
J(\bar u) + \frac{\delta}{9}\|y_u - \bar y\|_{L^2(\Omega)}^2 \leq J(u) \quad \forall u \in B_{\eps}(\bar u) \cap \Uad.
\]
\end{theorem}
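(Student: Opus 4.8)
The plan is to prove this quadratic growth estimate directly, via a second-order Taylor expansion of the reduced functional around $\bar u$, using the second-order condition \ref{ass:SecondOrder} on the part of $u-\bar u$ lying in the extended critical cone and the first-order condition \eqref{eq:fo3} to absorb the remaining part. Fix $u\in\Uad$ and put $v:=u-\bar u$. Since $J$ is $C^2$ by Lemma~\ref{lemma:derivJ}, there is $\theta\in(0,1)$ with
\[
J(u)=J(\bar u)+J'(\bar u)v+\tfrac12\,J''(\bar u+\theta v)v^2 .
\]
The pointwise form of \eqref{eq:fo3} forces $\bar u=u_a$ where $\bar p>0$ and $\bar u=u_b$ where $\bar p<0$, so $\bar p\,v=|\bar p|\,|v|\ge0$ a.e.\ for every $u\in\Uad$; hence $J'(\bar u)v=\int_\Omega|\bar p|\,|v|\dx\ge0$, and in particular $J'(\bar u)v\ge\tau\,\|v\|_{L^1(\{|\bar p|>\tau\})}$. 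It therefore suffices to bound $\tfrac12 J''(\bar u+\theta v)v^2$ from below, spending part of the nonnegative first-order term to compensate error terms.

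I would first show that $J''(\bar u+\theta v)v^2$ is close to $J''(\bar u)v^2$ when $u$ is close to $\bar u$. Writing $z_v:=S'(\bar u)v$, $z_v^\theta:=S'(\bar u+\theta v)v$, $y^\theta:=S(\bar u+\theta v)$ and $p^\theta:=p_{\bar u+\theta v}$, Lemma~\ref{lemma:derivJ} gives
\[
J''(\bar u+\theta v)v^2-J''(\bar u)v^2=\int_\Omega\bigl(1-f''(\cdot,y^\theta)p^\theta\bigr)\bigl((z_v^\theta)^2-z_v^2\bigr)\dx+\int_\Omega\bigl(f''(\cdot,\bar y)\bar p-f''(\cdot,y^\theta)p^\theta\bigr)z_v^2\dx .
\]
By Theorem~\ref{thm:pdeunique}, $\|y^\theta-\bar y\|_Y\to0$ as $v\to0$; by Lemma~\ref{lem:PDEstab} then also $\|p^\theta-\bar p\|_Y\to0$; and by (A1), $f'$ is Lipschitz and $f''$ is uniformly continuous on the bounded set fixed by \eqref{eq123}. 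The energy estimate for \eqref{eq:Dcontrolstate} (available because $f'\ge0$) gives $\|z_v^\theta-z_v\|_{L^2(\Omega)}\le C\,\|y^\theta-\bar y\|_{C(\bar\Omega)}\,\|z_v\|_{L^2(\Omega)}$, and $\|f''(\cdot,\bar y)\bar p-f''(\cdot,y^\theta)p^\theta\|_{L^\infty(\Omega)}$ becomes small; combining these, for every $\kappa>0$ there is $\eps_1>0$ such that $|J''(\bar u+\theta v)v^2-J''(\bar u)v^2|\le\kappa\,\|z_v\|_{L^2(\Omega)}^2$ for all $u\in B_{\eps_1}(\bar u)\cap\Uad$.

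Next I would split $v=v_1+v_2$ with $v_1:=v\,\chi_{\{|\bar p|\le\tau\}}$ and $v_2:=v\,\chi_{\{|\bar p|>\tau\}}$. As $u,\bar u\in\Uad$, $v_1$ satisfies the sign conditions of $C_{\bar u}^\tau$ and vanishes where $|\bar p|>\tau$, so $v_1\in C_{\bar u}^\tau$ and \ref{ass:SecondOrder} yields $J''(\bar u)v_1^2\ge\delta\,\|z_{v_1}\|_{L^2(\Omega)}^2$. Since the bilinear form $(w_1,w_2)\mapsto J''(\bar u)(w_1,w_2)$ is bounded by $C_Q\,\|z_{w_1}\|_{L^2(\Omega)}\|z_{w_2}\|_{L^2(\Omega)}$, the cross terms in $J''(\bar u)v^2=J''(\bar u)v_1^2+2J''(\bar u)(v_1,v_2)+J''(\bar u)v_2^2$ are absorbed into $\tfrac\delta2\|z_{v_1}\|_{L^2(\Omega)}^2$ at the price of a constant times $\|z_{v_2}\|_{L^2(\Omega)}^2$; and this is the crucial small term, because the energy estimate gives $\|z_{v_2}\|_{L^2(\Omega)}\le C\|v_2\|_{L^2(\Omega)}$ while $\|v_2\|_{L^2(\Omega)}^2\le\|v\|_{L^\infty(\Omega)}\|v_2\|_{L^1(\Omega)}\le\tfrac1\tau\|v\|_{L^\infty(\Omega)}\,J'(\bar u)v$, so $\|z_{v_2}\|_{L^2(\Omega)}^2\le c(\eps)\,J'(\bar u)v$ with $c(\eps)\to0$. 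It remains to pass back to the state: $y_u-\bar y=z_v+r$, where $r:=S(u)-S(\bar u)-S'(\bar u)v$ solves a linear equation with right-hand side $-(f(y_u)-f(\bar y)-f'(\bar y)(y_u-\bar y))$ whose $L^2(\Omega)$-norm is $O(\|y_u-\bar y\|_{L^\infty(\Omega)}\|y_u-\bar y\|_{L^2(\Omega)})$, and since $\|y_u-\bar y\|_Y\le C\|v\|_{L^2(\Omega)}$ by Theorem~\ref{thm:pdeunique}, we get $\|r\|_{L^2(\Omega)}\le o(1)\,\|y_u-\bar y\|_{L^2(\Omega)}$ as $v\to0$; hence $\|z_v\|_{L^2(\Omega)}$, and therefore $\|z_{v_1}\|_{L^2(\Omega)}$ up to an $o(1)$ factor and the small quantity $\|z_{v_2}\|_{L^2(\Omega)}$, is comparable to $\|y_u-\bar y\|_{L^2(\Omega)}$.

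Assembling the three steps — first fixing $\kappa$ small in terms of $\delta$, then shrinking $\eps$ so that $\kappa$, $c(\eps)$ and the $o(1)$-terms are small enough — one reaches $J(u)-J(\bar u)\ge(1-o(1))\,J'(\bar u)v+(\tfrac\delta9-o(1))\,\|y_u-\bar y\|_{L^2(\Omega)}^2\ge\tfrac\delta9\|y_u-\bar y\|_{L^2(\Omega)}^2$ for $u\in B_\eps(\bar u)\cap\Uad$. I expect the main obstacle to be precisely this bookkeeping: the boundedness constant $C_Q$ of $J''(\bar u)$, the various stability constants, and Young's inequality have to be balanced so that, after spending part of $J'(\bar u)v$ on the $v_2$-terms, the coefficient of $\|y_u-\bar y\|_{L^2(\Omega)}^2$ is still at least $\delta/9$; this is also what dictates in which (sufficiently strong) norm $B_\eps(\bar u)$ must be taken so that $c(\eps)$ and the remainder terms are genuinely small. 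A conceptually cleaner alternative is a compactness argument: if the estimate failed, pick $u_k\to\bar u$ violating it, normalize $w_k:=(u_k-\bar u)/\|u_k-\bar u\|_{L^2(\Omega)}$, pass to a weak limit $w$, use that $S'(\bar u)$ is compact and injective to pass to the limit in $z_{w_k}$, use \eqref{eq:fo3} to place $w$ in $C_{\bar u}^\tau$, and obtain a contradiction with \ref{ass:SecondOrder}.
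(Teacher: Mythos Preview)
The paper does not give its own proof of this theorem; it simply cites \cite{casas2012}. So the relevant comparison is between your two sketched approaches and the argument in that reference.

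Your direct Taylor-expansion-and-splitting approach has a genuine gap. The key step is the claim that $\|z_{v_2}\|_{L^2(\Omega)}^2\le c(\eps)\,J'(\bar u)v$ with $c(\eps)\to0$. But your own derivation gives $c(\eps)=\frac{C}{\tau}\|v\|_{L^\infty(\Omega)}$, and for $u\in\Uad$ the quantity $\|v\|_{L^\infty(\Omega)}$ is only bounded by $\|u_b-u_a\|_{L^\infty(\Omega)}$; it does \emph{not} tend to zero as $\|v\|_{L^2(\Omega)}\to0$. With $c$ merely a fixed constant, the coefficient in front of $J'(\bar u)v$ in your final assembly becomes $1-\tfrac12(C_3+\kappa)c$, which can be negative for problem data with large $C_Q$ or small $\tau$, and then the nonnegative first-order term can no longer be dropped. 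You anticipate this (``this is also what dictates in which \dots\ norm $B_\eps(\bar u)$ must be taken''), but the paper \emph{uses} Theorem~\ref{thm:ssc} in Theorem~\ref{thm:Apriorirates} with $u_\alpha\to\bar u$ in $L^2(\Omega)$ only, so you are not free to strengthen the norm.

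The contradiction/compactness argument you sketch at the end is in fact the approach of \cite{casas2012}: take a violating sequence $u_k\to\bar u$ in $L^2(\Omega)$, normalize $v_k=(u_k-\bar u)/\|u_k-\bar u\|_{L^2(\Omega)}$, pass to a weak limit $v$, use compactness of $S'(\bar u)$ to get $z_{v_k}\to z_v$ strongly, show $v\in C_{\bar u}^\tau$ via $J'(\bar u)v=0$, and reach a contradiction with \ref{ass:SecondOrder}. That route avoids the constant-balancing problem entirely because the comparison is done in the limit, where the $v_2$-part has already been forced to zero. If you want to complete the proof, pursue this second line; note only that one has to handle the degenerate case $z_v=0$ separately (there the contradiction comes from re-examining the violated inequality at the next order), and that injectivity of $S'(\bar u)$ is not actually needed.
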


\subsection{Regularity conditions}

In order to derive regularization error estimates for the control we assume some regularity on $\bar u$. We say, that $\bar u$ satisfies the assumption \ref{ass:ActiveSet}, if the following holds.

{
\renewcommand{\theassumption}{\textbf{ASC}}
\begin{assumption}[Active Set Condition]\label{ass:ActiveSet}
Let $\bar u$ be a local solution of \eqref{eq:main_problem}. Assume that there exists a set $I \subseteq \Omega$, a function $w \in Y$, and positive constants $\kappa, c$ such that the following holds:

\begin{enumerate}
  \item (source condition) $I \supset \{ x \in \Omega: \; \bar p(x) = 0 \}$  and
  \[\chi_I \bar u = \chi_I P_\Uad (S'(\bar u)^\ast w),\]
  \item (structure of active set) $A := \Omega \setminus I$ and for all $\eps > 0$
  \[ \meas\left(\{ x\in A: \; 0 < |\bar p(x)| < \eps  \}\right) \leq c\, \eps^\kappa,\]
  \item (regularity of solution) $S'(\bar u)^*w\in L^\infty(\Omega)$.
\end{enumerate}
\end{assumption}
}

This assumption is a combination of a source condition and a regularity assumption on the active sets. Similar regularity assumptions were used in, e.g., \cite{wachsmuth2016,wachsmuth2013,wachsmuth2011,wachsmuth2011b}
for problems with affine-linear control-to-state mapping $S$.
Note that for the special case $A = \Omega$ the solution $\bar u$ is of
bang-bang structure.
Under this regularity assumption we can establish an improved first order necessary condition, see \cite{wachsmuth2016}.

\begin{theorem}\label{thm:improvedfirstorder}
Let $\bar u$ satisfy assumption \ref{ass:ActiveSet}, then there is $c>0$ such that it holds
\[
J'(\bar u)(u - \bar u)  \ge c \|u-\bar u\|_{L^1(A)}^{1+\frac{1}{\kappa}} \quad \forall u \in \Uad.
\]
\end{theorem}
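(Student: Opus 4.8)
The plan is to start from the variational inequality \eqref{eq:fo3}, namely $J'(\bar u)(u-\bar u)\ge 0$ for all $u\in\Uad$, and to quantify how much slack there is in this inequality using the three parts of Assumption \ref{ass:ActiveSet}. Recall from Lemma \ref{lemma:derivJ} that $J'(\bar u)(u-\bar u)=\int_\Omega \bar p\,(u-\bar u)\dx$. I would split this integral over the set $A=\Omega\setminus I$ and its complement $I$. On $I$ the source condition together with the projection formula gives $\chi_I \bar u = \chi_I P_\Uad(S'(\bar u)^\ast w)$, and using the standard pointwise characterization of the projection together with the monotonicity/non-expansiveness of $P_\Uad$, the contribution $\int_I \bar p\,(u-\bar u)\dx$ can be rewritten in terms of $\int_I (S'(\bar u)^\ast w)(u-\bar u)\dx = \int_I w\, S'(\bar u)(u-\bar u)\dx$; this is handled exactly as in the convex case in \cite{wachsmuth2016,wachsmuth2011}, and since we only want a lower bound on $J'(\bar u)(u-\bar u)$ over all of $\Omega$, the $I$-part either vanishes or can be absorbed, leaving the essential contribution coming from $A$.

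The heart of the matter is the estimate on $A$. There, for $x\in A$ one has $\bar p(x)\ne 0$ and the pointwise minimum principle coming from \eqref{eq:fo3} forces $\bar u(x)=u_a(x)$ where $\bar p(x)>0$ and $\bar u(x)=u_b(x)$ where $\bar p(x)<0$; hence $\bar p(x)(u(x)-\bar u(x))=|\bar p(x)|\,|u(x)-\bar u(x)|$ for a.e.\ $x\in A$ and every feasible $u$. Thus $\int_A \bar p(u-\bar u)\dx = \int_A |\bar p|\,|u-\bar u|\dx$. Now I would invoke the structure-of-active-set condition $\meas(\{x\in A:0<|\bar p(x)|<\eps\})\le c\,\eps^\kappa$ and run the standard "distribution function" argument: for any threshold $\eps>0$, split $A$ into $\{|\bar p|\ge\eps\}$ and $\{|\bar p|<\eps\}$, bound $\int_A|\bar p|\,|u-\bar u|\ge \eps\int_{\{|\bar p|\ge\eps\}}|u-\bar u|\ge \eps\big(\|u-\bar u\|_{L^1(A)} - \|u-\bar u\|_{L^1(\{|\bar p|<\eps\})}\big)$, estimate the small-$\bar p$ part by $\|u-\bar u\|_{L^\infty}\,\meas(\{|\bar p|<\eps\})\le C\eps^\kappa$ using boundedness of $u_a,u_b$, and then optimize over $\eps$. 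Choosing $\eps$ proportional to a suitable power of $\|u-\bar u\|_{L^1(A)}$ yields the exponent $1+\tfrac1\kappa$ and the constant $c$. This is precisely the computation carried out in \cite[proof of the improved first-order condition]{wachsmuth2016}, adapted to the present setting where $S'(\bar u)$ is linear even though $S$ is not.

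The main obstacle, and the only place the nonlinearity of $S$ enters, is the treatment of the term on $I$: one must ensure that $\int_I \bar p(u-\bar u)\dx \ge 0$ (or is nonnegative up to a harmless term), which relies on writing $\bar p|_I$ via $S'(\bar u)^\ast w$ through the adjoint-linearized operator and using the monotonicity of $P_\Uad$; here part 3 of Assumption \ref{ass:ActiveSet}, the $L^\infty$-regularity of $S'(\bar u)^\ast w$, is what makes the projection identity meaningful pointwise and lets the argument go through verbatim as in the affine case. Once that term is under control, combining the $I$- and $A$-contributions gives $J'(\bar u)(u-\bar u)\ge c\|u-\bar u\|_{L^1(A)}^{1+1/\kappa}$ for all $u\in\Uad$, which is the claim; I would then simply cite \cite{wachsmuth2016} for the detailed bookkeeping.
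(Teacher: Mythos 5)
Your proposal is correct and follows the same route the paper intends: the paper itself only cites \cite{wachsmuth2016} for this theorem, and the identical computation (threshold splitting of $A$ by $|\bar p|\gtrless\eps$, the measure bound from part 2 of \ref{ass:ActiveSet}, and optimization of $\eps\sim\|u-\bar u\|_{L^1(A)}^{1/\kappa}$) is written out in the paper's own Lemma \ref{lem:improvedfirstorder_sparse} for the sparse case. One remark: you misidentify the $I$-part as the main obstacle requiring the source condition and projection monotonicity; in fact the first-order conditions alone give $\bar p(x)(u(x)-\bar u(x))=|\bar p(x)|\,|u(x)-\bar u(x)|\ge 0$ pointwise a.e.\ on all of $\Omega$ (not just on $A$), so $\int_I\bar p\,(u-\bar u)\dx\ge 0$ is trivial and neither part 1 nor part 3 of \ref{ass:ActiveSet} is needed anywhere in this proof --- only part 2 and the fact that $\{\bar p=0\}\subset I$ enter.
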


\section{Convergence of the Tikhonov regularization}\label{sec:tikconv}

Let us introduce the  Tikhonov regularized optimal control problem associated to \eqref{eq:main_problem}.
Let $\alpha>0$ be given. Then the regularized problem reads
\begin{equation}\label{eq:tikhonov_problem}\tag{$\boldsymbol{P_\alpha}$}
 \begin{split}
    \text{Minimize} &\quad J_\alpha(u) := \frac{1}{2}\|y - y_d\|_{L^2(\Omega)}^2 + \frac{\alpha}{2} \|u\|_{L^2(\Omega)}^2  \\
    \text{such that} &\quad u_a \leq u \leq u_b \quad \text{a.e. in } \Omega,
\end{split}
\end{equation}
where $y_u$ denotes again the solution of the semi-linear partial differential equation \eqref{eq:semipde}.
Clearly, the regularized problem admits solutions.

At first, we want to show that weak limit points of global solutions $(u_\alpha)_\alpha$ for $\alpha\searrow0$
are again global solutions of \eqref{eq:main_problem}.
In addition, we show that every strict local solution of \eqref{eq:main_problem} can be obtained
as a limit of local solutions of \eqref{eq:tikhonov_problem}. The results and the proofs are very similar to \cite[Section 4]{CasasTroeltzsch2014}, but since the proofs are short we present them here.

\begin{lemma}\label{lem:strongglobalconv}
Let $(u_\alpha)_{\alpha > 0}$ be a family of global solutions of \eqref{eq:tikhonov_problem} such that $u_\alpha \rightharpoonup u_0$ in $L^2(\Omega)$. Then $u_0$ is a global solution of \eqref{eq:main_problem}. In addition,
$u_\alpha \to u_0$ strongly in $L^2(\Omega)$. Moreover, the following identity holds
\[
\|u_0\|_{L^2(\Omega)} = \min\big\{ \|u\|_{L^2(\Omega)}: \; \text{$u$ is a global solution of \eqref{eq:main_problem}} \big\}.
\]
\end{lemma}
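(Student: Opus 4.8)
The plan is to run the classical argument identifying the limit of Tikhonov-regularized minimizers as a minimal-norm global solution, exploiting that $L^2(\Omega)$ is a Hilbert space and that the control-to-state map sends weakly convergent sequences in $\Uad$ to strongly convergent sequences of states (Lemma \ref{lem:statestrongconv}). First I would record feasibility: $\Uad$ is convex and closed in $L^2(\Omega)$, hence weakly sequentially closed, so $u_0 \in \Uad$; moreover by Theorem \ref{thm:optsolution} the set of global solutions of \eqref{eq:main_problem} is nonempty, so the minimum on the right-hand side of the claimed identity is meaningful. Then Lemma \ref{lem:statestrongconv} gives $y_{u_\alpha} \to y_{u_0}$ strongly in $Y$, in particular in $L^2(\Omega)$, so that $J(u_\alpha) \to J(u_0)$.

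The key step is a norm comparison. Fix any global solution $\bar u$ of \eqref{eq:main_problem} with state $\bar y$. Since $\bar u$ is feasible for \eqref{eq:tikhonov_problem} and $u_\alpha$ is a global minimizer of $J_\alpha$, we have
\[
J(u_\alpha) + \frac{\alpha}{2}\|u_\alpha\|_{L^2(\Omega)}^2 \;\le\; J(\bar u) + \frac{\alpha}{2}\|\bar u\|_{L^2(\Omega)}^2 .
\]
Because $u_\alpha \in \Uad$ and $\bar u$ is a global solution of \eqref{eq:main_problem}, $J(\bar u) \le J(u_\alpha)$; combining this with the displayed inequality forces $\|u_\alpha\|_{L^2(\Omega)} \le \|\bar u\|_{L^2(\Omega)}$ for every $\alpha > 0$, and in addition
\[
0 \;\le\; J(u_\alpha) - J(\bar u) \;\le\; \frac{\alpha}{2}\big(\|\bar u\|_{L^2(\Omega)}^2 - \|u_\alpha\|_{L^2(\Omega)}^2\big) \;\le\; \frac{\alpha}{2}\|\bar u\|_{L^2(\Omega)}^2 \;\longrightarrow\; 0 .
\]
Letting $\alpha \searrow 0$ and using $J(u_\alpha) \to J(u_0)$ yields $J(u_0) = J(\bar u)$; since $u_0 \in \Uad$, this shows $u_0$ is a global solution of \eqref{eq:main_problem}.

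Finally I would deduce the minimal-norm property and strong convergence. Weak lower semicontinuity of the $L^2$-norm together with the bound $\|u_\alpha\| \le \|\bar u\|$ gives $\|u_0\|_{L^2(\Omega)} \le \liminf_{\alpha\searrow0} \|u_\alpha\|_{L^2(\Omega)} \le \|\bar u\|_{L^2(\Omega)}$ for every global solution $\bar u$; since $u_0$ is itself a global solution, this is exactly the claimed identity. Taking $\bar u = u_0$ in the norm bound gives $\limsup_{\alpha\searrow0} \|u_\alpha\|_{L^2(\Omega)} \le \|u_0\|_{L^2(\Omega)} \le \liminf_{\alpha\searrow0}\|u_\alpha\|_{L^2(\Omega)}$, hence $\|u_\alpha\|_{L^2(\Omega)} \to \|u_0\|_{L^2(\Omega)}$; combined with $u_\alpha \rightharpoonup u_0$ in the Hilbert space $L^2(\Omega)$, the identity $\|u_\alpha - u_0\|_{L^2(\Omega)}^2 = \|u_\alpha\|_{L^2(\Omega)}^2 - 2(u_\alpha,u_0)_{L^2(\Omega)} + \|u_0\|_{L^2(\Omega)}^2 \to 0$ delivers strong convergence.

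There is no serious obstacle here; the argument is entirely standard. The only points that require a little care are getting the direction of the inequality $J(\bar u) \le J(u_\alpha)$ right (it uses global optimality of $\bar u$ together with feasibility of $u_\alpha$ for the \emph{unregularized} problem) and the Hilbert-space step converting weak convergence plus convergence of norms into strong convergence.
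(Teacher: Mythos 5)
Your proof is correct and follows essentially the same route as the paper: exploit $J_\alpha(u_\alpha)\le J_\alpha(u)$ for suitable competitors, use Lemma \ref{lem:statestrongconv} to pass to the limit in the tracking term, and upgrade weak convergence to strong convergence via convergence of the $L^2$-norms. The only cosmetic difference is that you compare against a global solution $\bar u$ from the outset (invoking Theorem \ref{thm:optsolution} for existence), whereas the paper first tests against an arbitrary $u\in\Uad$ to identify $u_0$ as a global solution and only then runs the norm-comparison; both orderings are valid.
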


\begin{proof}
Let $u \in \Uad$ be given. Then it holds $J_\alpha(u_\alpha) \leq J_\alpha(u)$ for all $\alpha > 0$.
The family $(u_\alpha)_\alpha$ is bounded in $L^\infty(\Omega)$. Then Lemma \ref{lem:statestrongconv} implies
\[
J_0(u_0) = \lim\limits_{\alpha \to 0} J_0(u_\alpha) = \lim\limits_{\alpha \to 0} J_\alpha(u_\alpha) \leq \lim\limits_{\alpha \to 0} J_\alpha(u) = J_0(u).
\]
Since $u \in \Uad$ was arbitrary, it follows that $u_0$ is a global solution of \eqref{eq:main_problem}. Let us now prove the strong convergence $u_\alpha \to u_0$ in $L^2(\Omega)$. On one hand, we have due to the weakly lower semicontinuity of the norm that
\[
\|u_0\|_{L^2(\Omega)} \leq \liminf\limits_{\alpha \to 0} \|u_\alpha\|_{L^2(\Omega)} \leq \limsup\limits_{\alpha \to 0} \|u_\alpha\|_{L^2(\Omega)}.
\]
On the other hand, using that $u_0$ is a global solution of \eqref{eq:main_problem}, we obtain
\[
J_0(u_\alpha) + \frac{\alpha}{2} \|u_\alpha\|_{L^2(\Omega)}^2 = J_\alpha(u_\alpha) \leq J_\alpha(u_0) = J_0(u_0) + \frac{\alpha}{2} \|u_0\|_{L^2(\Omega)}^2 \leq J_0(u_\alpha) + \frac{\alpha}{2} \|u_0\|_{L^2(\Omega)}^2
\]
which implies $\|u_\alpha\|_{L^2(\Omega)} \leq \|u_0\|_{L^2(\Omega)}$ for all $\alpha > 0$. This shows  $\|u_\alpha\|_{L^2(\Omega)} \to \|u_0\|_{L^2(\Omega)}$, and $u_\alpha \to u_0$ in $L^2(\Omega)$ follows.

Let now $u$ be a global solution of \eqref{eq:main_problem}. Then we get
\begin{align*}
J_0(u_\alpha) + \frac{\alpha}{2}\|u_\alpha\|_{L^2(\Omega)}^2 &= J_\alpha(u_\alpha)\\
&\leq J_\alpha(u) =J_0(u) + \frac{\alpha}{2}\|u\|_{L^2(\Omega)}^2\\
&= J_0(u_0) + \frac{\alpha}{2}\|u\|_{L^2(\Omega)}^2\\
&\leq J_0(u_\alpha) + \frac{\alpha}{2}\|u\|_{L^2(\Omega)}^2,
\end{align*}
which implies $\|u_\alpha\|_{L^2(\Omega)} \leq \|u\|_{L^2(\Omega)}$ for all $\alpha>0$. This shows
\[
\|u_0\|_{L^2(\Omega)} = \lim\limits_{\alpha \to 0} \|u_\alpha\|_{L^2(\Omega)} \leq \|u\|_{L^2(\Omega)},
\]
which finishes the proof.
\end{proof}

This result shows that weak limit points of global solutions of \eqref{eq:tikhonov_problem} are
global solutions of minimal norm of \eqref{eq:main_problem}.
Since this problem is non-convex in general, such minimal norm solutions may not be uniquely determined.

\begin{theorem}\label{thm:localstrongconv}
Let $\bar u$ be a strict local solution of \eqref{eq:main_problem}. Then there exist $\rho > 0$ and a  family $(u_\alpha)_{\alpha \in (0, \bar \alpha)}$ of local solutions of \eqref{eq:tikhonov_problem} such that $u_\alpha \to \bar u$ in $L^2(\Omega)$.
\end{theorem}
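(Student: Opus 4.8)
The plan is to use the classical localization trick for strict local solutions together with the stability results already established. Since $\bar u$ is a \emph{strict} local solution of \eqref{eq:main_problem}, there is $\rho>0$ such that $\bar u$ is the unique global solution of the auxiliary problem obtained by adding the constraint $\|u-\bar u\|_{L^2(\Omega)}\le\rho$ to \eqref{eq:main_problem}; shrinking $\rho$ if necessary, we may also assume that this localized feasible set is nonempty and that $\bar u$ is the unique minimizer there. For each $\alpha>0$ consider the localized regularized problem
\[
\text{Minimize } J_\alpha(u) \quad \text{s.t. } u\in\Uad,\ \|u-\bar u\|_{L^2(\Omega)}\le\rho.
\]
This problem has a solution $u_\alpha$ because the feasible set is bounded, convex and closed, hence weakly sequentially compact in $L^2(\Omega)$, and $J_\alpha$ is weakly lower semicontinuous (using Lemma \ref{lem:statestrongconv} for the tracking term).

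Next I would pass to the limit $\alpha\searrow0$ along an arbitrary subsequence. The family $(u_\alpha)$ is bounded in $L^2(\Omega)$, so some subsequence converges weakly to a limit $u_0$ feasible for the localized original problem. Arguing exactly as in Lemma \ref{lem:strongglobalconv} — using $J_\alpha(u_\alpha)\le J_\alpha(\bar u)$ and Lemma \ref{lem:statestrongconv} — one gets $J_0(u_0)\le J_0(\bar u)$, so $u_0$ is a global solution of the localized original problem; by strictness, $u_0=\bar u$. Since the weak limit is the same ($\bar u$) along every subsequence, the whole family converges weakly, $u_\alpha\rightharpoonup\bar u$. Strong convergence then follows by the same norm argument as in Lemma \ref{lem:strongglobalconv}: from $J_\alpha(u_\alpha)\le J_\alpha(\bar u)$ one derives $\|u_\alpha\|_{L^2(\Omega)}\le\|\bar u\|_{L^2(\Omega)}$, which combined with weak lower semicontinuity of the norm gives $\|u_\alpha\|_{L^2(\Omega)}\to\|\bar u\|_{L^2(\Omega)}$, and hence $u_\alpha\to\bar u$ strongly in $L^2(\Omega)$.

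The main obstacle — and the only nontrivial point beyond the routine argument — is to show that for $\alpha$ small enough $u_\alpha$ is in fact a local solution of the \emph{unconstrained} regularized problem \eqref{eq:tikhonov_problem}, i.e.\ that the artificial constraint $\|u-\bar u\|_{L^2(\Omega)}\le\rho$ is inactive. This is exactly where the strong convergence just proved is used: since $u_\alpha\to\bar u$ in $L^2(\Omega)$, there is $\bar\alpha>0$ such that $\|u_\alpha-\bar u\|_{L^2(\Omega)}<\rho$ for all $\alpha\in(0,\bar\alpha)$. For such $\alpha$, $u_\alpha$ lies in the interior (with respect to the ball constraint) of the localized feasible set, so any competitor $u\in\Uad$ sufficiently close to $u_\alpha$ is still feasible for the localized problem; minimality of $u_\alpha$ there yields $J_\alpha(u_\alpha)\le J_\alpha(u)$ for all such $u$, which is precisely the statement that $u_\alpha$ is a local solution of \eqref{eq:tikhonov_problem}. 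This proves the theorem with this $\rho$ and $\bar\alpha$.
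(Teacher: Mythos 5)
Your proposal is correct and follows essentially the same route as the paper: localize to the set $\Uadrho$, solve the regularized problem there, pass to the (weak, then strong) limit using the argument of Lemma \ref{lem:strongglobalconv} and the strict local optimality of $\bar u$, and conclude that the ball constraint is inactive for small $\alpha$ so that $u_{\alpha}$ is a local minimizer of $J_\alpha$ on all of $\Uad$. The only difference is that you spell out the subsequence--subsequence argument and the final inactivity step in more detail than the paper does; the substance is identical.
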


\begin{proof}
For $\rho>0$ define the auxiliary feasible set $\Uadrho := \Uad \cap \{ v \in L^2(\Omega): \; \|v - \bar u\|_{L^2(\Omega)} \leq \rho \}$.
Let $\rho > 0$ be such that $\bar u$ is the unique global minimum of $J_0$ in the set $\Uadrho$.
We investigate the following auxiliary problem:
\[
\min J_\alpha(u) \text{ subject to } u \in \Uadrho.
\]
For every $\alpha > 0$ let $u_{\rho, \alpha}$ be a global solution of this auxiliary problem.
By construction, the family $(u_{\rho, \alpha})$ is uniformly bounded in $L^\infty(\Omega)$. Hence we find a sequence $\alpha_k \to 0$ such that $u_{\rho, \alpha_k} \rightharpoonup u_0$ in $L^2(\Omega)$. Arguing as in the proof of Lemma \ref{lem:strongglobalconv},
it follows that $u_0$ is a global minimum of $J_0$ on $\Uadrho$ and $\|u_{\rho, \alpha} - u_0\|_{L^2(\Omega)} \to 0$.
Consequently, we obtain $u_0 = \bar u$, and it holds $\lim_{\alpha \to 0} u_{\rho, \alpha} = \bar u$ strongly in $L^2(\Omega)$.
This implies that there is $\bar \alpha$ such that $\|u_{\rho, \alpha} - \bar u\|_{L^2(\Omega)} < \rho$ for all $\alpha < \bar \alpha$. Thus, the controls $u_{\rho, \alpha}$ are local minima of $J_\alpha$ on $\Uad$ for all $\alpha \leq \bar \alpha$.
\end{proof}

Using the second-order optimality condition and the growth estimate of Theorem \ref{thm:ssc}, we
can establish the following a-priori error estimate for the states and adjoints.
Analogous results were obtained in \cite{wachsmuth2011} for the case of a linear state equation.

\begin{theorem}\label{thm:Apriorirates}
Let $\bar u$ be a local solution of \eqref{eq:main_problem} satisfying \ref{ass:SecondOrder}.
Let $(u_\alpha)_{\alpha \in (0, \bar \alpha)}$ be such that $u_\alpha\in \Uad$ and $u_\alpha \to \bar u$ in $L^2(\Omega)$
for $\alpha \searrow0$. Then it holds
\[
\|y_\alpha - \bar y\|_{L^2(\Omega)} = o(\sqrt{\alpha}), \quad \|p_\alpha - \bar p\|_{L^\infty(\Omega)} = o(\sqrt{\alpha}).
\]
\end{theorem}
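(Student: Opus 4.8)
The plan is to combine the quadratic growth estimate of Theorem~\ref{thm:ssc} with the near-optimality of $u_\alpha$ for the regularized problem, exactly as in the linear-quadratic setting of \cite{wachsmuth2011}. I read the hypothesis as saying that each $u_\alpha$ is a local solution of \eqref{eq:tikhonov_problem} (the situation produced by Theorem~\ref{thm:localstrongconv}); the only consequence I need is that $J_\alpha(u_\alpha)\le J_\alpha(\bar u)$ as soon as $\bar u$ lies in the neighbourhood on which $u_\alpha$ is locally optimal, which holds for all small $\alpha$ since $u_\alpha\to\bar u$ in $L^2(\Omega)$. Write $y_\alpha:=S(u_\alpha)$, $p_\alpha:=p_{u_\alpha}$, and note $J=J_0$.

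First I would expand $J_\alpha(u_\alpha)\le J_\alpha(\bar u)$ and cancel the identical quadratic term, obtaining
\[
J_0(u_\alpha)-J_0(\bar u)\le\frac{\alpha}{2}\Big(\|\bar u\|_{L^2(\Omega)}^2-\|u_\alpha\|_{L^2(\Omega)}^2\Big).
\]
Next, because $u_\alpha\to\bar u$ in $L^2(\Omega)$, for $\alpha$ small enough $u_\alpha\in B_\eps(\bar u)\cap\Uad$ with $\eps$ as in Theorem~\ref{thm:ssc}, so the growth estimate there gives $\tfrac{\delta}{9}\|y_\alpha-\bar y\|_{L^2(\Omega)}^2\le J_0(u_\alpha)-J_0(\bar u)$. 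Chaining the two inequalities yields
\[
\frac{\delta}{9}\|y_\alpha-\bar y\|_{L^2(\Omega)}^2\le\frac{\alpha}{2}\,\Big|\,\|\bar u\|_{L^2(\Omega)}^2-\|u_\alpha\|_{L^2(\Omega)}^2\,\Big|,
\]
and since the norm is continuous and $u_\alpha\to\bar u$ in $L^2(\Omega)$, the bracketed factor tends to $0$ as $\alpha\searrow0$. Hence $\|y_\alpha-\bar y\|_{L^2(\Omega)}^2=o(\alpha)$, which is the first claimed rate. For the adjoint estimate I would simply invoke Lemma~\ref{lem:PDEstab} applied to $u_\alpha\in\Uad$, i.e.\ $\|\bar p-p_\alpha\|_Y\le c\,\|\bar y-y_\alpha\|_{L^2(\Omega)}$, and use $\|\cdot\|_{L^\infty(\Omega)}=\|\cdot\|_{C(\bar\Omega)}\le\|\cdot\|_Y$ together with the rate just proved.

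The only genuinely delicate point is the comparison step: one must know that $u_\alpha$ is truly a local minimizer of $J_\alpha$ over $\Uad$ and that it eventually lies both in the set on which that local optimality is valid and in the ball $B_\eps(\bar u)$ of Theorem~\ref{thm:ssc}; all of this is guaranteed by the assumed strong convergence $u_\alpha\to\bar u$ in $L^2(\Omega)$. The remaining manipulations are elementary, and in particular neither assumption \ref{ass:ActiveSet} nor the source condition is used here — those enter only in the sharper rate for the controls in Section~\ref{sec:convrates}.
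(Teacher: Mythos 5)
Your proof is correct and follows essentially the same route as the paper: combine $J_\alpha(u_\alpha)\le J_\alpha(\bar u)$ with the quadratic growth estimate of Theorem~\ref{thm:ssc} to get $\tfrac{\delta}{9}\|y_\alpha-\bar y\|_{L^2(\Omega)}^2\le\tfrac{\alpha}{2}\bigl(\|\bar u\|_{L^2(\Omega)}^2-\|u_\alpha\|_{L^2(\Omega)}^2\bigr)$, conclude $o(\alpha)$ from the strong convergence of the controls, and finish with Lemma~\ref{lem:PDEstab}. Your explicit remark that the comparison $J_\alpha(u_\alpha)\le J_\alpha(\bar u)$ must be secured (e.g.\ via the construction of Theorem~\ref{thm:localstrongconv}) is a point the paper leaves implicit, but it does not change the argument.
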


\begin{proof}
Using Theorem \ref{thm:ssc} and the fact that $J_\alpha(u_\alpha) \leq J_\alpha(\bar u)$ we get
\begin{align*}
J_0(\bar u) + \frac{\delta}{9} \|y_\alpha - \bar y\|_{L^2(\Omega)}^2 + \frac{\alpha}{2}\|u_\alpha\|_{L^2(\Omega)}^2 & \leq J_0(u_\alpha) + \frac{\alpha}{2}\|u_\alpha\|_{L^2(\Omega)}^2 = J_\alpha(u_\alpha)\\
&\leq J_\alpha(\bar u) = J_0(\bar u) + \frac{\alpha}{2}\|\bar u\|_{L^2(\Omega)}^2.
\end{align*}
This implies
\[
\frac{\delta}{9}\|y_\alpha - \bar y\|_{L^2(\Omega)}^2 \leq \frac{\alpha}{2}\left( \|\bar u\|_{L^2(\Omega)}^2 - \|u_\alpha\|_{L^2(\Omega)}^2 \right).
\]
Using the strong convergence $u_\alpha \to \bar u$, we get
\[
\lim\limits_{\alpha \to 0} \frac{\|y_\alpha - \bar y\|_{L^2(\Omega)}}{\sqrt{\alpha}}  = \lim\limits_{\alpha \to 0} \frac{9}{2 \delta} \sqrt{ \|\bar u\|_{L^2(\Omega)}^2 - \|u_\alpha\|_{L^2(\Omega)}^2  } \to 0,
\]
which proves the first part of the claim. The second part follows directly from Lemma \ref{lem:PDEstab}.
\end{proof}

\section{Convergence rates}\label{sec:convrates}
The results of Theorems \ref{thm:localstrongconv} and \ref{thm:Apriorirates} provide convergence results and a-priori rates.
However, numerical computations reveal that the a-priori rates are suboptimal, see, e.g., the numerical examples in Section \ref{sec:num}.
In addition, it is hard to guarantee that optimization algorithms deliver globally or locally optimal controls.
Hence, we will assume in the subsequent analysis that only stationary points $u_\alpha$ of \eqref{eq:tikhonov_problem} are available.
Recall that $u_\alpha$ is a stationary point if
it satisfies
\[
J'(u_\alpha)(u-u_\alpha) + (\alpha u_\alpha, \ u-u_\alpha)\ge 0 \quad \forall u\in\Uad.
\]
Furthermore one observes that in many applications the optimal control $\bar u$ exhibits a bang-bang structure,
as $y_d$ is not reachable, i.e., there exists no feasible control $u \in \Uad$ such that $y_d = Su$.
In this section we want to prove convergence rates under our regularity assumption \ref{ass:ActiveSet}, which is suitable for bang-bang solutions.
The regularity assumption \ref{ass:ActiveSet} was used in \cite{wachsmuth2016,wachsmuth2013,wachsmuth2011,wachsmuth2011b} to establish convergence rates for an affine-linear control-to-state mapping.
First we need some technical results, which will be helpful later on.

\begin{lemma}\label{lem41}
Let $\bar y=S(\bar u)$, $\bar u\in \Uad$ be given. Then there is $c>0$ and $\epsilon>0$ such that
\[
 \|y_u - \bar y\|_{L^2(\Omega)} \le c  \|z_{u-\bar u}\|_{L^2(\Omega)}
\]
holds for all $y_u$ with $u\in \Uad$ and $\|y_u - \bar y\|_{L^2(\Omega)} \le \epsilon$.
\end{lemma}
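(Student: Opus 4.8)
The statement is a reverse estimate: it says that, locally near $\bar y$, the genuine state difference $y_u-\bar y$ is controlled by the linearized state difference $z_{u-\bar u}=S'(\bar u)(u-\bar u)$. The natural route is to write $y_u-\bar y = S(u)-S(\bar u) = S'(\bar u)(u-\bar u) + r(u)$ where $r(u)$ is the Taylor remainder of the $C^2$ map $S$, and then to show that $\|r(u)\|_{L^2(\Omega)}$ can be absorbed into a fraction of $\|y_u-\bar y\|_{L^2(\Omega)}$ once that quantity is small, giving $\|y_u-\bar y\|_{L^2} \le \|z_{u-\bar u}\|_{L^2} + \tfrac12\|y_u-\bar y\|_{L^2}$ and hence the claim with $c=2$.

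First I would set up the remainder carefully. Since $S\colon L^p(\Omega)\to Y$ is $C^2$ and Lipschitz, but $u-\bar u$ is only bounded in $L^\infty$ (hence we cannot make $\|u-\bar u\|_{L^p}$ small), I would instead work directly with the state equations rather than with a Taylor expansion in the control. Write $w:=y_u-\bar y$ and $z:=z_{u-\bar u}$. Subtracting the state equations \eqref{eq:fo1} and \eqref{eq:semipde} gives $Aw + f(y_u)-f(\bar y) = u-\bar u$, and subtracting the linearized equation \eqref{eq:Dcontrolstate} (at $\bar u$) gives
\[
A(w-z) + f'(\bar y)(w-z) = f'(\bar y)w - \big(f(y_u)-f(\bar y)\big).
\]
By the mean value theorem, $f(y_u)-f(\bar y) = \int_0^1 f'(\bar y + t w)\,dt\; w$, so the right-hand side equals $\big(f'(\bar y) - \int_0^1 f'(\bar y+tw)\,dt\big) w$. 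Using the global bound \eqref{eq123} ($\|\bar y\|_Y,\|y_u\|_Y\le M$), the local Lipschitz bound on $f'$ from \textbf{(A1)} (controlling $f'$ by $C_{f,M}$ and, more to the point, $|f'(\bar y)-f'(\bar y+tw)|\le C_{f,M}\,t\,|w|$ pointwise), and Stampacchia's estimate together with the coercivity \textbf{(A2)} (exactly as in the proof of Lemma \ref{lem:PDEstab}), I obtain
\[
\|w-z\|_{L^2(\Omega)} \le c\,\big\|\textstyle\big(f'(\bar y)-\int_0^1 f'(\bar y+tw)\,dt\big)w\big\|_{L^2(\Omega)}
\le c\,C_{f,M}\,\|w\|_{L^\infty(\Omega)}\,\|w\|_{L^2(\Omega)} .
\]
Now invoke the continuity of $S\colon L^2(\Omega)\to Y$ on the bounded set $\Uad$ (Theorem \ref{thm:pdeunique} plus Lemma \ref{lem:statestrongconv}, or directly the interpolation/regularity estimate) to bound $\|w\|_{L^\infty(\Omega)}\le \|w\|_Y$; and since $w\to 0$ in $Y$ as $\|w\|_{L^2(\Omega)}\to 0$ — this is where I would be a little careful — pick $\epsilon>0$ small enough that $\|y_u-\bar y\|_{L^2(\Omega)}\le\epsilon$ forces $c\,C_{f,M}\,\|w\|_{L^\infty(\Omega)}\le \tfrac12$. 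Then $\|w-z\|_{L^2(\Omega)}\le\tfrac12\|w\|_{L^2(\Omega)}$, whence $\|w\|_{L^2(\Omega)}\le\|z\|_{L^2(\Omega)}+\tfrac12\|w\|_{L^2(\Omega)}$ and therefore $\|y_u-\bar y\|_{L^2(\Omega)}\le 2\|z_{u-\bar u}\|_{L^2(\Omega)}$.

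The main obstacle is the implication "$\|y_u-\bar y\|_{L^2(\Omega)}$ small $\Rightarrow$ $\|y_u-\bar y\|_{L^\infty(\Omega)}$ small". The map $S$ is Lipschitz from $L^p$, $p>n/2$, into $Y$, but not obviously from $L^2$ into $L^\infty$ in a way that is uniform over $\Uad$. I would resolve this by noting that $u\mapsto y_u$ has the property that weakly convergent controls in $L^2$ give strongly convergent states in $Y$ (Lemma \ref{lem:statestrongconv}); combined with the boundedness of $\Uad$ in $L^\infty$, a standard subsequence-contradiction argument shows: for every $\eta>0$ there is $\epsilon>0$ such that $u\in\Uad$ and $\|y_u-\bar y\|_{L^2(\Omega)}\le\epsilon$ imply $\|y_u-\bar y\|_Y\le\eta$. (If any sequence violated this, extract a weakly convergent subsequence of controls; its states converge strongly in $Y$, and the $L^2$-limit of the states is $\bar y$, forcing the $Y$-limit to be $\bar y$ as well — contradiction.) With this uniform modulus of continuity in hand, the choice of $\epsilon$ above is legitimate, and the proof is complete.
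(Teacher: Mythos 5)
Your proof is correct and is essentially the argument behind the result the paper relies on: the paper gives no details here beyond citing \cite{casas2012}, and the standard proof there is exactly your decomposition $y_u-\bar y=z_{u-\bar u}+(w-z)$ with the remainder equation estimated via the mean value theorem for $f'$ and Lax--Milgram/Stampacchia, then absorbed once $\|y_u-\bar y\|_{L^\infty(\Omega)}$ is small. The one genuinely delicate point --- upgrading smallness of $\|y_u-\bar y\|_{L^2(\Omega)}$ to smallness of $\|y_u-\bar y\|_{L^\infty(\Omega)}$ uniformly over $\Uad$ --- you identify and resolve correctly by the weak-compactness subsequence argument based on Lemma~\ref{lem:statestrongconv}.
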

\begin{proof}
This can be proven following the lines of \cite[Corollary 2.8]{casas2012}.
\end{proof}

The following Lemma is an extension of \cite[Lemma 2.7]{casas2012}.

\begin{lemma}\label{lemma}
Let $(u_\alpha)_\alpha$ be a family of controls $u_\alpha \in \Uad$ such that $u_\alpha \rightharpoonup \bar u$ in $L^2(\Omega)$ for $\alpha\searrow0$.
Then for every $\eps>0$ there is $\alpha_{\max}>0$ such that
\[
 |J''(u_\alpha)v^2 - J''(\bar u)v^2 | \le \eps \|z_v\|_{L^2(\Omega)}^2
\]
for all $\alpha \in (0,\alpha_{\max})$.
\end{lemma}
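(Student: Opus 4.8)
The plan is to write out the difference $J''(u_\alpha)v^2 - J''(\bar u)v^2$ using the explicit formula from Lemma \ref{lemma:derivJ}, split it into pieces that can be controlled by the weak convergence $u_\alpha\rightharpoonup\bar u$, and exploit the strong convergence results already available (Lemma \ref{lem:statestrongconv} and Lemma \ref{lem:PDEstab}). Denote $y_\alpha := S(u_\alpha)$, $p_\alpha := p_{u_\alpha}$, $\bar y := S(\bar u)$, $\bar p := p_{\bar u}$, and let $z_v^\alpha := S'(u_\alpha)v$, $\bar z_v := S'(\bar u)v$. By Lemma \ref{lemma:derivJ},
\[
J''(u_\alpha)v^2 - J''(\bar u)v^2 = \int_\Omega \big(1 - f''(x,y_\alpha)p_\alpha\big)(z_v^\alpha)^2 \dx - \int_\Omega \big(1 - f''(x,\bar y)\bar p\big)\bar z_v^2 \dx.
\]
First I would add and subtract terms to decompose this into: (i) $\int_\Omega \big((z_v^\alpha)^2 - \bar z_v^2\big)\dx$; (ii) $\int_\Omega f''(x,\bar y)\bar p\,\big(\bar z_v^2 - (z_v^\alpha)^2\big)\dx$; and (iii) $\int_\Omega \big(f''(x,\bar y)\bar p - f''(x,y_\alpha)p_\alpha\big)(z_v^\alpha)^2\dx$.

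The key estimates are as follows. Since $u_\alpha\rightharpoonup\bar u$ in $L^2(\Omega)$, Lemma \ref{lem:statestrongconv} gives $y_\alpha\to\bar y$ strongly in $Y$, hence in $L^\infty(\Omega)$ and $L^2(\Omega)$; by Lemma \ref{lem:PDEstab} (or Theorem \ref{thm:pdeunique} applied to the adjoint equation), $p_\alpha\to\bar p$ strongly in $Y$, hence in $L^\infty(\Omega)$. For the linearized states, I would show $z_v^\alpha\to\bar z_v$ strongly in $L^2(\Omega)$ for fixed $v$: the difference $z_v^\alpha - \bar z_v$ solves a linear elliptic equation with right-hand side $\big(f'(\bar y) - f'(y_\alpha)\big)\bar z_v$, and since $f'(y_\alpha)\to f'(\bar y)$ uniformly (by (A1) and $y_\alpha\to\bar y$ in $L^\infty$) while $\bar z_v$ is a fixed $L^2$ function, Stampacchia/Lax–Milgram estimates yield $\|z_v^\alpha - \bar z_v\|_{L^2(\Omega)}\to 0$; moreover $\|z_v^\alpha\|_{L^2(\Omega)}\le c\|v\|_{L^2(\Omega)}$ uniformly in $\alpha$. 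Then term (i) is bounded by $\|z_v^\alpha - \bar z_v\|_{L^2}\big(\|z_v^\alpha\|_{L^2} + \|\bar z_v\|_{L^2}\big)$, which tends to $0$; term (ii) is bounded by $\|f''(\cdot,\bar y)\bar p\|_{L^\infty}$ times the same quantity; and in term (iii) the factor $\|f''(x,\bar y)\bar p - f''(x,y_\alpha)p_\alpha\|_{L^\infty}\to 0$ because $p_\alpha\to\bar p$ in $L^\infty$, $f''(\cdot,y_\alpha)$ is bounded by (A1) with $M$ from \eqref{eq123}, and $f''(\cdot,y_\alpha)\to f''(\cdot,\bar y)$ uniformly by the uniform-continuity part of (A1); this is multiplied by the bounded quantity $\|z_v^\alpha\|_{L^2}^2\le c^2\|v\|_{L^2}^2$.

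The subtlety — and the reason this is stated as an extension of \cite[Lemma 2.7]{casas2012} — is uniformity in $v$: we need a single $\alpha_{\max}$ that works for all $v$ simultaneously, and the bound must be $\eps\|z_v\|_{L^2}^2 = \eps\|\bar z_v\|_{L^2}^2$, not $\eps\|v\|_{L^2}^2$. The first two terms are naturally of the form $o(1)\cdot\|\bar z_v\|_{L^2}^2$ only if one controls $\|z_v^\alpha - \bar z_v\|_{L^2}$ by a multiple of $\|\bar z_v\|_{L^2}$ with a constant going to zero. This is where the main work lies: from the equation for $z_v^\alpha - \bar z_v$ one gets $\|z_v^\alpha - \bar z_v\|_{L^2}\le c\|(f'(\bar y)-f'(y_\alpha))\bar z_v\|_{L^2}\le c\,\|f'(\bar y)-f'(y_\alpha)\|_{L^\infty}\|\bar z_v\|_{L^2}$, and the first factor is $o(1)$ uniformly in $v$ because $y_\alpha\to\bar y$ in $L^\infty$ does not depend on $v$. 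Term (iii) is already uniform since $\|z_v^\alpha\|_{L^2}\le c\|v\|_{L^2}$ is not good enough — one must instead note that $z_v^\alpha$ and $\bar z_v$ are uniformly equivalent in $L^2$-norm (again from the equation for their difference with the $o(1)$ constant, giving $\|z_v^\alpha\|_{L^2}\le (1+o(1))\|\bar z_v\|_{L^2}$), so $\|z_v^\alpha\|_{L^2}^2\le C\|\bar z_v\|_{L^2}^2$ with $C$ independent of $v$ and $\alpha$ small. Collecting the three estimates, choosing $\alpha_{\max}$ so small that each $o(1)$ factor is below $\eps/3$ times the relevant constant, completes the proof.
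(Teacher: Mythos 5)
Your proposal is correct and follows essentially the same route as the paper's proof: the same splitting of $J''(u_\alpha)v^2-J''(\bar u)v^2$ into a coefficient-difference part and a linearized-state-difference part (yours has three terms instead of the paper's two, which is cosmetic), the same use of $y_\alpha\to\bar y$ and $p_\alpha\to\bar p$ in $L^\infty(\Omega)$ together with (A1), and, crucially, the same elliptic estimate $\|z_v^\alpha-\bar z_v\|\le c\,\|f'(y_\alpha)-f'(\bar y)\|_{L^\infty(\Omega)}\|\bar z_v\|_{L^2(\Omega)}$ with $c$ uniform in $\alpha$ and $v$, yielding the uniform comparability of $\|z_v^\alpha\|_{L^2(\Omega)}$ and $\|\bar z_v\|_{L^2(\Omega)}$ that makes the bound hold with $\eps\|z_v\|_{L^2(\Omega)}^2$ uniformly in $v$. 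You correctly identify and resolve the one genuine subtlety (uniformity in $v$), exactly as the paper does.
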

\begin{proof}
 Let us denote the states and adjoints corresponding to $u_\alpha$ and $\bar u$ by $y_\alpha$, $p_\alpha$, and $\bar y$, $\bar p$,
 respectively.
 Due to Lemma \ref{lem:statestrongconv} we obtain $y_\alpha \to \bar y$ and $p_\alpha\to\bar p$ in $L^\infty(\Omega)$.
 Let us define $z_{\alpha,v}:=S'(u_\alpha)v$ and $z_v:=S'(\bar u)v$. According to Lemma \ref{lemma:derivJ}
 we can write
 \[
  J''(u_\alpha)v^2 - J''(\bar u)v^2 = \int_\Omega (f''(\bar y)\bar p - f''(y_\alpha)p_\alpha) z_v^2 \dx
  +\int_\Omega (1- f''(y_\alpha)p_\alpha) (z_{\alpha,v}^2-z_v^2)\dx.
 \]
 Here, the absolute value of the first integral can be made smaller than $\eps/2 \|z_v\|_{L^2(\Omega)}^2$ for $\alpha$ small enough due to $y_\alpha \to \bar y$ and $p_\alpha\to\bar p$ in $L^\infty(\Omega)$.
 Let us observe that $(z_{\alpha,v})$ is uniformly bounded in $Y$.
 It remains to study the difference $z_{\alpha,v}-z_v$.
 This difference satisfies the differential equation
\[
 A(z_{\alpha,v}-z_v) + f'(y_\alpha)(z_{\alpha,v}-z_v) + (f'(y_\alpha)-f'(\bar y))z_v=0.
\]
Arguing as in Lemma \ref{lem:PDEstab} we find
\[
 \|z_{\alpha,v}-z_v\|_Y \le c \|f'(y_\alpha)-f'(\bar y)\|_{L^\infty(\Omega)} \|z_v\|_{L^2(\Omega)}.
\]
Note that the constant $c$ is independent of $y_\alpha$, which is a consequence of the non-negativity of $f'$.
This estimate also implies the existence of $c>0$ independent of $\alpha$ such that
\[
 \|z_{\alpha,v}\|_{L^2(\Omega)} \le c \|z_v\|_{L^2(\Omega)}.
\]
This shows that the integral
\[
 \left| \int_\Omega (1- f''(y_\alpha)p_\alpha) (z_{\alpha,v}+z_v)(z_{\alpha,v}-z_v)\dx \right|
\]
can be made smaller than $\eps/2 \|z_v\|_{L^2(\Omega)}^2$ for $\alpha$ small enough.
\end{proof}

The following result uses the regularity assumption on the optimal control.
\begin{lemma}\label{lem:techniquest}
Let $\bar u$ satisfy Assumption \ref{ass:ActiveSet}.
Then it holds for all $u\in U_{ad}$
\[
 (\bar u,\bar u -u)_{L^2(\Omega)}\le \|w\|_{L^2(\Omega)} \|z_{u-\bar u}\|_{L^2(\Omega)}
 + \|\bar u -S'(\bar u)^\ast w\|_{L^\infty(A)}  \|u-\bar u\|_{L^1(A)}.
 \]
\end{lemma}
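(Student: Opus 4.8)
The plan is to decompose the inner product $(\bar u, \bar u - u)_{L^2(\Omega)}$ over the two parts $I$ and $A$ of $\Omega$ supplied by Assumption \ref{ass:ActiveSet}, writing
\[
(\bar u, \bar u - u)_{L^2(\Omega)} = \int_I \bar u\,(\bar u - u)\dx + \int_A \bar u\,(\bar u - u)\dx,
\]
and then to handle each piece separately, exploiting the source representation on $I$ and a crude $L^\infty$–$L^1$ bound on $A$.

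On the set $A$ the estimate is immediate: I would add and subtract $S'(\bar u)^\ast w$ under the integral, so that
\[
\int_A \bar u\,(\bar u - u)\dx = \int_A \bigl(\bar u - S'(\bar u)^\ast w\bigr)(\bar u - u)\dx + \int_A \bigl(S'(\bar u)^\ast w\bigr)(\bar u - u)\dx.
\]
The first term is bounded by $\|\bar u - S'(\bar u)^\ast w\|_{L^\infty(A)}\,\|u-\bar u\|_{L^1(A)}$ using Hölder's inequality (note $S'(\bar u)^\ast w\in L^\infty$ by part (3) of \ref{ass:ActiveSet}, so the quantity is finite), which is exactly the second term in the claimed bound. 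The leftover term $\int_A (S'(\bar u)^\ast w)(\bar u - u)\dx$ I would carry along and combine with the contribution from $I$.

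On the set $I$ the source condition $\chi_I \bar u = \chi_I P_\Uad(S'(\bar u)^\ast w)$ enters. The key observation is that for any $u\in\Uad$ one has the pointwise variational inequality for the projection, $\bigl(\bar u(x) - S'(\bar u)^\ast w(x)\bigr)\bigl(\bar u(x) - u(x)\bigr)\le 0$ a.e.\ on $I$ (since $\bar u = P_\Uad(S'(\bar u)^\ast w)$ there and $u(x)$ is admissible), hence $\int_I \bar u\,(\bar u - u)\dx \le \int_I (S'(\bar u)^\ast w)(\bar u - u)\dx$. Adding this to the leftover term from $A$ gives
\[
(\bar u,\bar u - u)_{L^2(\Omega)} \le \int_\Omega (S'(\bar u)^\ast w)(\bar u - u)\dx + \|\bar u - S'(\bar u)^\ast w\|_{L^\infty(A)}\,\|u-\bar u\|_{L^1(A)}.
\]
Finally, using the adjoint relation $\int_\Omega (S'(\bar u)^\ast w)(\bar u - u)\dx = (w, S'(\bar u)(\bar u - u))_{L^2(\Omega)} = (w, -z_{u-\bar u})_{L^2(\Omega)}$ together with Cauchy–Schwarz bounds this by $\|w\|_{L^2(\Omega)}\|z_{u-\bar u}\|_{L^2(\Omega)}$, which completes the estimate.

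The only genuinely delicate point is the direction of the sign in the projection inequality on $I$: one must check that $\int_I \bar u(\bar u-u)\dx - \int_I(S'(\bar u)^\ast w)(\bar u-u)\dx = \int_I (\bar u - P_\Uad(S'(\bar u)^\ast w) + P_\Uad(S'(\bar u)^\ast w) - S'(\bar u)^\ast w)(\bar u-u)\dx$, where the first difference vanishes on $I$ by the source condition and the second contributes $\int_I (P_\Uad(S'(\bar u)^\ast w) - S'(\bar u)^\ast w)(P_\Uad(S'(\bar u)^\ast w)-u)\dx \le 0$ by the standard characterization of the $L^2$ projection onto $\Uad$; everything else is Hölder's inequality and the definition of the adjoint operator, which are routine.
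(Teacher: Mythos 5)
Your proposal is correct and follows essentially the same route as the paper's proof: split the inner product over $I$ and $A$, use the pointwise projection characterization together with the source condition to replace $\bar u$ by $S'(\bar u)^\ast w$ on $I$, pass $S'(\bar u)^\ast$ to the other side of the pairing, and finish with Cauchy--Schwarz and the $L^\infty$--$L^1$ H\"older estimate on $A$. The only difference is cosmetic (you bound the $A$-term first and carry the leftover along, while the paper recombines at the end), and your verification of the sign in the projection inequality is exactly the point the paper leaves implicit.
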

\begin{proof}
We compute
\[\begin{split}
 (\bar u,\bar u -u)_{L^2(\Omega)}& = ( \bar u, (\bar u - u) \mid_I)_{L^2(\Omega)} + ( \bar u, (\bar u - u) \mid_A)_{L^2(\Omega)}\\
 &\le  (S'(\bar u)^\ast w,(\bar u -u)\mid_I)_{L^2(\Omega)} + (\bar u,(\bar u -u)\mid_A)_{L^2(\Omega)}\\
 &= (w, S'(\bar u)(\bar u -u))_{L^2(\Omega)} +  (\bar u -S'(\bar u)^\ast w ,(\bar u -u)\mid_A)_{L^2(\Omega)},
 \end{split}
\]
which yields the result.
\end{proof}

We now have everything at hand to establish convergence rates for the control. We want to point out, that we only need weak convergence of the sequence $(u_\alpha)_\alpha$.
\begin{theorem}\label{thm:rates_u}
Let $\bar u$ satisfy Assumption \ref{ass:ActiveSet}, and let the assumptions of Theorem \ref{thm:ssc} hold for $\bar u$.
Let $(u_\alpha)_\alpha$ be a family of stationary points converging weakly in $L^2(\Omega)$ to $\bar u$ for $\alpha\searrow0$.
Then it holds with $d:=\min(\kappa,1)$ for $\alpha\searrow 0$
\[\begin{aligned}
\|z_{u_\alpha-\bar u}\|_{L^2(\Omega)} &= \mathcal{O}(\alpha^{\frac{d+1}2}),\\
\|u_\alpha-\bar u\|_{L^1(A)} & = \mathcal{O}(\alpha^{\frac{\kappa (d+1)}{\kappa+1}}),\\
\|u_\alpha-\bar u\|_{L^2(\Omega)} &= \mathcal{O}(\alpha^{d/2}).
\end{aligned}\]
In the case $w=0$ or $A=\Omega$, these convergences rates are obtained with $d:=\kappa$.
\end{theorem}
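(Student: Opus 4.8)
The plan is to exploit the variational inequalities satisfied by $\bar u$ (as a stationary point of the original problem) and by $u_\alpha$ (as a stationary point of the regularized problem), and to combine them with the coercivity furnished by \ref{ass:SecondOrder} and the improved first-order condition of Theorem \ref{thm:improvedfirstorder}. Concretely, I would test the stationarity of $u_\alpha$ with $u=\bar u$ and the stationarity of $\bar u$ (i.e.\ \eqref{eq:fo3}) with $u=u_\alpha$, and add the two inequalities. After writing $J'(u_\alpha)(\bar u-u_\alpha)-J'(\bar u)(\bar u-u_\alpha)$ as a second-order remainder, a Taylor expansion of $J'$ around $\bar u$ together with Lemma \ref{lemma} (which controls $J''(u_\alpha)-J''(\bar u)$ on linearized states) and Lemma \ref{lem41} (which bounds $\|y_{u_\alpha}-\bar y\|_{L^2}$ by $\|z_{u_\alpha-\bar u}\|_{L^2}$) should yield, for $\alpha$ small,
\[
\tfrac{\delta}{2}\|z_{u_\alpha-\bar u}\|_{L^2(\Omega)}^2 + \alpha\,(\bar u,\bar u-u_\alpha)_{L^2(\Omega)} \;\le\; -J'(\bar u)(u_\alpha-\bar u) + (\text{small})\cdot\|z_{u_\alpha-\bar u}\|_{L^2(\Omega)}^2,
\]
using that $u_\alpha-\bar u\in C_{\bar u}^\tau$ eventually (this requires the $o(\sqrt\alpha)$ estimate on $\|p_\alpha-\bar p\|_{L^\infty}$ from Theorem \ref{thm:Apriorirates}, which holds since weakly convergent stationary points satisfy its hypotheses via Lemma \ref{lem:statestrongconv}; note one must first pass from the stationarity inequalities to the $o(\sqrt\alpha)$ state rate, which is available because the growth estimate of Theorem \ref{thm:ssc} can be replayed at stationary points).

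The next step is to insert the two ingredients from \ref{ass:ActiveSet}. On the one hand, Theorem \ref{thm:improvedfirstorder} gives $-J'(\bar u)(u_\alpha-\bar u)\le -c\|u_\alpha-\bar u\|_{L^1(A)}^{1+1/\kappa}$, i.e.\ a negative term we can absorb. On the other hand, Lemma \ref{lem:techniquest} bounds $(\bar u,\bar u-u_\alpha)_{L^2(\Omega)}$ from above by $\|w\|_{L^2}\|z_{u_\alpha-\bar u}\|_{L^2} + \|\bar u-S'(\bar u)^\ast w\|_{L^\infty(A)}\|u_\alpha-\bar u\|_{L^1(A)}$. Substituting, one arrives at an inequality of the schematic form
\[
\tfrac{\delta}{4}\|z_{u_\alpha-\bar u}\|_{L^2(\Omega)}^2 + c\,\|u_\alpha-\bar u\|_{L^1(A)}^{1+1/\kappa} \;\le\; C\alpha\,\|z_{u_\alpha-\bar u}\|_{L^2(\Omega)} + C\alpha\,\|u_\alpha-\bar u\|_{L^1(A)}.
\]
Now a Young/Cauchy--Schwarz argument on the first right-hand term ($C\alpha\|z\|\le \tfrac{\delta}{8}\|z\|^2 + C'\alpha^2$) and a Young argument with exponents $1+1/\kappa$ and $1+\kappa$ on the second ($C\alpha\|u-\bar u\|_{L^1(A)}\le \tfrac{c}{2}\|u-\bar u\|_{L^1(A)}^{1+1/\kappa}+C''\alpha^{1+\kappa}$ after rescaling, i.e.\ $C''\alpha^{(1+\kappa)/\kappa\cdot\kappa/(\kappa+1)}$... more carefully the conjugate exponent of $1+1/\kappa=(\kappa+1)/\kappa$ is $\kappa+1$, giving the power $\alpha^{\kappa+1}$ which after taking the $\tfrac{\kappa}{\kappa+1}$-root... ) lets us absorb both norms on the left and conclude $\|z_{u_\alpha-\bar u}\|_{L^2}^2=\mathcal O(\alpha^2)+\mathcal O(\alpha^{1+\kappa\cdot\frac{\kappa+1}{\kappa}\cdot\frac{1}{\kappa+1}})$, i.e.\ $\|z_{u_\alpha-\bar u}\|_{L^2}=\mathcal O(\alpha^{(d+1)/2})$ with $d=\min(\kappa,1)$, and similarly $\|u_\alpha-\bar u\|_{L^1(A)}=\mathcal O(\alpha^{\kappa(d+1)/(\kappa+1)})$. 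The special cases $w=0$ or $A=\Omega$ kill the $C\alpha\|z\|$ term (when $w=0$) or make the $L^1(A)$-term the only control contribution, so one genuinely recovers the rate with $d=\kappa$.

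For the final $L^2$-estimate I would split $\|u_\alpha-\bar u\|_{L^2(\Omega)}^2 = \|u_\alpha-\bar u\|_{L^2(I)}^2 + \|u_\alpha-\bar u\|_{L^2(A)}^2$. On $A$, interpolate: since $u_\alpha-\bar u$ is bounded in $L^\infty$, $\|u_\alpha-\bar u\|_{L^2(A)}^2\le \|u_\alpha-\bar u\|_{L^\infty}\|u_\alpha-\bar u\|_{L^1(A)}=\mathcal O(\alpha^{\kappa(d+1)/(\kappa+1)})$, whose square-root exponent is $\ge d$ (one checks $\kappa(d+1)/(2(\kappa+1))\ge d/2$ for both $\kappa\le 1$ and $\kappa\ge1$). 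On $I$, the source condition $\chi_I\bar u=\chi_I P_\Uad(S'(\bar u)^\ast w)$ together with the projection formula for $u_\alpha$ (namely $u_\alpha=P_\Uad(-\tfrac1\alpha p_\alpha)$) and the nonexpansiveness of $P_\Uad$ should give $\|u_\alpha-\bar u\|_{L^2(I)}\le \|\tfrac1\alpha p_\alpha + S'(\bar u)^\ast w\|_{L^2(I)}$; bounding $\tfrac1\alpha p_\alpha$ via $p_\alpha$-adjoint estimates and the already-proven rate on $\|z_{u_\alpha-\bar u}\|_{L^2}$ (hence on $\|y_\alpha-\bar y\|_{L^2}$, hence on $\|p_\alpha-\bar p\|$ by Lemma \ref{lem:PDEstab}), and using $\bar p=0$ a.e.\ where relevant on $I$ — recall $I\supset\{\bar p=0\}$ — yields $\|u_\alpha-\bar u\|_{L^2(I)}=\mathcal O(\alpha^{d/2})$. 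I expect the main obstacle to be this last step: carefully handling the $\tfrac1\alpha$-blow-up on $I$ using only the source condition and the stationarity projection identity, and verifying that the interpolation exponent on $A$ is never worse than $d/2$; the second-order/Young bookkeeping in the middle is routine once the signs are arranged correctly, but getting $u_\alpha-\bar u\in C_{\bar u}^\tau$ (needed to apply \ref{ass:SecondOrder}) rigorously from the $L^\infty$-convergence of $p_\alpha$ is a point that needs care.
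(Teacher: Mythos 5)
Your derivation of the first two rates follows the paper's proof almost verbatim: add the stationarity inequality for $u_\alpha$ (tested with $\bar u$) to the improved first-order condition of Theorem \ref{thm:improvedfirstorder} (tested with $u_\alpha$), Taylor-expand $J'$, use \ref{ass:SecondOrder} plus Lemma \ref{lemma} to get $\tfrac{\delta}{2}\|z_{u_\alpha-\bar u}\|^2$ on the left, bound $\alpha(\bar u,\bar u-u_\alpha)$ via Lemma \ref{lem:techniquest} and Young's inequality with conjugate exponents $\tfrac{\kappa+1}{\kappa}$ and $\kappa+1$. That bookkeeping is correct. One detour is unjustified, though: you invoke Theorem \ref{thm:Apriorirates} (hence the $o(\sqrt\alpha)$ adjoint rate) to place $u_\alpha-\bar u$ in $C^\tau_{\bar u}$, arguing that its growth argument ``can be replayed at stationary points.'' It cannot as stated: that proof uses $J_\alpha(u_\alpha)\le J_\alpha(\bar u)$, which holds for minimizers, not for arbitrary stationary points. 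Fortunately no rate is needed here. Weak convergence $u_\alpha\rightharpoonup\bar u$ already gives $p_\alpha\to\bar p$ in $L^\infty(\Omega)$ (Lemmas \ref{lem:statestrongconv} and \ref{lem:PDEstab}), and combined with the projection identity $u_\alpha=P_\Uad(-\tfrac1\alpha p_\alpha)$ this forces $u_\alpha=\bar u$ on $\{|\bar p|>\tau\}$ for small $\alpha$, which is exactly how the paper proceeds.

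The genuine gap is the $L^2$ rate. When you split $\alpha(u_\alpha,\bar u-u_\alpha)=-\alpha\|u_\alpha-\bar u\|_{L^2(\Omega)}^2+\alpha(\bar u,\bar u-u_\alpha)$, the term $\alpha\|u_\alpha-\bar u\|_{L^2(\Omega)}^2$ appears \emph{on the left-hand side} of the combined inequality; your second schematic display has silently discarded it. Keeping it, the final estimate
\[
\tfrac{\delta}{2}\|z_{u_\alpha-\bar u}\|_{L^2(\Omega)}^2+\tfrac{c_A}{2}\|u_\alpha-\bar u\|_{L^1(A)}^{1+\frac1\kappa}+\alpha\|u_\alpha-\bar u\|_{L^2(\Omega)}^2\le C\bigl(\alpha^2+\alpha^{\kappa+1}\bigr)
\]
yields $\|u_\alpha-\bar u\|_{L^2(\Omega)}^2\le C(\alpha+\alpha^\kappa)=\mathcal O(\alpha^d)$ immediately; this is the paper's argument. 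Your substitute route via the $I$/$A$ decomposition does not close: the interpolation on $A$ is fine, but on $I$ the comparison of $u_\alpha=P_\Uad(-\tfrac1\alpha p_\alpha)$ with $\chi_I\bar u=\chi_I P_\Uad(S'(\bar u)^\ast w)$ requires bounding $\|\tfrac1\alpha p_\alpha+S'(\bar u)^\ast w\|_{L^2(I)}$. Since one only has $\|p_\alpha-\bar p\|_{L^\infty}=\mathcal O(\alpha^{\frac{d+1}{2}})$, the quotient $\tfrac1\alpha(p_\alpha-\bar p)$ is merely $\mathcal O(\alpha^{\frac{d-1}{2}})$ — which is $\mathcal O(1)$ when $d=1$ and divergent when $d<1$ — and the term $\tfrac1\alpha\bar p$ is not controlled at all on $I\setminus\{\bar p=0\}$ (the source condition only guarantees $I\supset\{\bar p=0\}$, not equality). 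So the $L^2(I)$ bound you aim for, $\mathcal O(\alpha^{d/2})$, is not reachable this way. The fix is simply to retain the $\alpha\|u_\alpha-\bar u\|_{L^2(\Omega)}^2$ term; no separate argument on $I$ is needed.
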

\begin{proof}
By first-order optimality conditions of $u_\alpha$ we know
\begin{equation}\label{eq:variational_ua}
 J'(u_\alpha)(u-u_\alpha) + \alpha (u_\alpha, \, u-u_\alpha)_{L^2(\Omega)}\ge0 \quad \forall u\in U_{ad}.
\end{equation}
Due to the Assumption \ref{ass:ActiveSet}, Theorem \ref{thm:improvedfirstorder} gives
\[
 J'(\bar u)(u-\bar u) \ge c_A \|u-\bar u\|_{L^1(A)}^{1+\frac1\kappa}  \quad \forall u\in U_{ad}.
\]
Using $\bar u$ and $u_\alpha$ as test functions in these inequalities and adding them, yields
\[
 c_A \|u_\alpha-\bar u\|_{L^1(A)}^{1+\frac1\kappa}  + \alpha \|u_\alpha-\bar u\|_{L^2(\Omega)}^2
 \le \alpha (\bar u,\bar u -u_\alpha)_{L^2(\Omega)} + (J'(\bar u)-J'(u_\alpha))(u_\alpha-\bar u).
\]
Using Lemma \ref{lem:techniquest}, we obtain by Young's inequality
\[\begin{split}
 \alpha (\bar u,\bar u -u_\alpha)_{L^2(\Omega)} &\le \alpha \|w\|_{L^2(\Omega)} \|z_{u_\alpha-\bar u}\|_{L^2(\Omega)}
 + \alpha \|\bar u -S'(\bar u)^\ast w\|_{L^\infty(A)}  \|u_\alpha-\bar u\|_{L^1(A)}\\
 &\le  \alpha \|w\|_{L^2(\Omega)} \|z_{u_\alpha-\bar u}\|_{L^2(\Omega)} + \frac{c_A}2 \|u_\alpha-\bar u\|_{L^1(A)}^{1+\frac1\kappa} +C \alpha^{\kappa+1},
\end{split}\]
with $C>0$ independent of $\alpha$. By Taylor expansion,
we obtain
\[
 (J'(\bar u)-J'(u_\alpha))(u_\alpha-\bar u) = -J''(\bar u)(u_\alpha-\bar u)^2
 - \big(J''(\tilde u_\alpha) - J''(\bar u)\big)(u_\alpha-\bar u)^2,
\]
with $\tilde u_\alpha$ between $u_\alpha$ and $\bar u$.

Let us argue that $u_\alpha -\bar u$ is in the extended critical cone $C^\tau_{\bar u}$.
Since $u_\alpha \rightharpoonup \bar u$ in $L^2(\Omega)$, it follows
from Theorem \ref{thm:pdeunique}, Lemma \ref{lem:statestrongconv}, and Lemma \ref{lem:PDEstab}
that $p_\alpha \to \bar p$ in $L^\infty(\Omega)$.
Hence, we obtain
$|\alpha u_\alpha + p_\alpha|>\tau/2$
and $\operatorname{sign}(\alpha u_\alpha + p_\alpha)=\operatorname{sign}(\bar p)$
for all $\alpha$ sufficiently small on the set, where $|\bar p|>\tau$ is satisfied.
If we choose $\alpha$ small enough, then also $\tau/2 > \alpha \max(\|u_a\|_{L^\infty} , \|u_b\|_{L^\infty})$ holds.
The variational inequality \eqref{eq:variational_ua} implies
\[
u_\alpha = P_\Uad \left(- \frac{1}{\alpha} p_\alpha \right),
\]
which
yields $u_\alpha=\bar u$ on $|\bar p|>\tau$. Consequently, $u_\alpha - \bar u\in C^\tau_{\bar u}$ holds
for all $\alpha$ sufficiently small. Hence, we can apply the second-order condition \ref{ass:SecondOrder} on $\bar u$ to obtain
\[
 J''(\bar u)(u_\alpha-\bar u)^2 \ge \delta \|z_{u_\alpha - \bar u}\|_{L^2(\Omega)}^2.
\]
In addition (see Lemma \ref{lemma}), we find that
\[
 |J''(\tilde u_\alpha)v^2 - J''(\bar u)v^2 | \le \frac\delta{4}\|z_v\|_{L^2(\Omega)}^2.
\]
for all $\alpha$ sufficiently small. Collecting the estimates above, we get
\begin{align*}
c_A \|u_\alpha - \bar u\|_{L^1(A)}^{1 + \frac{1}{\kappa}} &+ \alpha \| u_\alpha - \bar u\|_{L^2(\Omega)}^2
\leq \alpha (\bar u,\bar u -u_\alpha)_{L^2(\Omega)} + ( J'(u_\alpha) - J'(\bar u) )(\bar u - u_\alpha)\\
&\leq \alpha \|w\|_{L^2(\Omega)}\|z_{u_\alpha - \bar u}\|_{L^2(\Omega)} + \frac{c_A}{2} \|u_\alpha - \bar u\|_{L^1(A)}^{1 + \frac{1}{\kappa}} + C \alpha^{\kappa +1}\\
&\quad - J''(\bar u)(u_\alpha - \bar u) - (J''(\tilde u_\alpha) - J''(\bar u))(u_\alpha - \bar u)^2\\
&\leq \frac{\alpha^2 \|w\|_{L^2(\Omega)}^2}{ \delta} - \frac{\delta}{2} \|z_{u_\alpha - \bar u}\|_{L^2(\Omega)}^2 + \frac{c_A}{2}\|u_\alpha - \bar u\|_{L^1(A)}^{1 + \frac{1}{\kappa}}.
\end{align*}
This yields
\begin{multline*}
\frac\delta{2} \|z_{u_\alpha-\bar u}\|_{L^2(\Omega)}^2
+ \frac{c_A}2 \|u_\alpha-\bar u\|_{L^1(A)}^{1+\frac1\kappa}
+ \alpha \|u_\alpha-\bar u\|_{L^2(\Omega)}^2
\le \delta^{-1}\|w\|_{L^2(\Omega)}^2 \alpha^2 +C \alpha^{\kappa+1},
\end{multline*}
which proves the claim.
\end{proof}

Convergence rates for the state and adjoint state can be now easily obtained.

\begin{corollary}\label{cor:rates_yp}
Let the assumptions of Theorem \ref{thm:rates_u} hold for $\bar u$. Denote $\bar y$ the associated state and $\bar p$ the adjoint state. Then it holds for $\alpha\searrow0$
\[
\|y_\alpha - \bar y\|_{L^2(\Omega)} = \mathcal{O}(\alpha^{\frac{d+1}2}), \quad
\|p_\alpha - \bar p\|_{L^\infty(\Omega)} = \mathcal{O}(\alpha^{\frac{d+1}2}),
\]
where $d$ is as in the statement of Theorem \ref{thm:rates_u}.
\end{corollary}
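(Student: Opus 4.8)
The plan is to chain together the $L^2$-bound for $z_{u_\alpha-\bar u}$ established in Theorem \ref{thm:rates_u} with the local estimate of Lemma \ref{lem41} and the adjoint stability of Lemma \ref{lem:PDEstab}. No new ideas are needed; the corollary is essentially a transfer of the rate from the linearized states $z_{u_\alpha-\bar u}$ to the true states $y_\alpha$ and then to the adjoints $p_\alpha$.

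First I would observe that, since $(u_\alpha)_\alpha$ converges weakly in $L^2(\Omega)$ to $\bar u$, Lemma \ref{lem:statestrongconv} gives $y_\alpha \to \bar y$ strongly in $Y$, and in particular in $L^2(\Omega)$. Consequently there is some $\bar\alpha' \in (0,\bar\alpha)$ such that $\|y_\alpha - \bar y\|_{L^2(\Omega)} \le \epsilon$ for all $\alpha < \bar\alpha'$, where $\epsilon$ is the radius appearing in Lemma \ref{lem41}. For such $\alpha$, that lemma yields $\|y_\alpha - \bar y\|_{L^2(\Omega)} \le c\,\|z_{u_\alpha - \bar u}\|_{L^2(\Omega)}$, and inserting the rate $\|z_{u_\alpha-\bar u}\|_{L^2(\Omega)} = \mathcal{O}(\alpha^{(d+1)/2})$ from Theorem \ref{thm:rates_u} gives the first estimate $\|y_\alpha - \bar y\|_{L^2(\Omega)} = \mathcal{O}(\alpha^{(d+1)/2})$.

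For the adjoint states, Lemma \ref{lem:PDEstab}, applied with the fixed control $\bar u$ and $u = u_\alpha \in U_{ad}$, gives $\|p_\alpha - \bar p\|_Y \le c\,\|y_\alpha - \bar y\|_{L^2(\Omega)}$. Since $\|\cdot\|_{L^\infty(\Omega)} \le \|\cdot\|_Y$ by definition of the norm on $Y$, the rate just obtained for the states carries over verbatim, which yields $\|p_\alpha - \bar p\|_{L^\infty(\Omega)} = \mathcal{O}(\alpha^{(d+1)/2})$ and completes the proof.

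The only point that requires any attention is that Lemma \ref{lem41} is a purely local statement, valid only for states close to $\bar y$ in $L^2(\Omega)$; hence one must first invoke the strong convergence $y_\alpha \to \bar y$ to guarantee the smallness hypothesis $\|y_\alpha - \bar y\|_{L^2(\Omega)} \le \epsilon$ before the Lipschitz-type bound can be applied. Beyond this routine bookkeeping there is no genuine obstacle, and the argument is short.
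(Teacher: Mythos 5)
Your proof is correct and follows exactly the paper's own argument: Theorem \ref{thm:rates_u} gives the rate for $z_{u_\alpha-\bar u}$, Lemma \ref{lem41} transfers it to $y_\alpha-\bar y$, and Lemma \ref{lem:PDEstab} yields the adjoint estimate. Your additional remark that the strong convergence $y_\alpha\to\bar y$ (via Lemma \ref{lem:statestrongconv}) is needed to verify the smallness hypothesis of Lemma \ref{lem41} is a sensible piece of bookkeeping that the paper leaves implicit.
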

\begin{proof}
By Theorem \ref{thm:rates_u} we already know $\|z_{u_\alpha-\bar u}\|_{L^2(\Omega)} = \mathcal{O}(\alpha^{\frac{d+1}2})$.
Lemma \ref{lem41} implies
$\|y_\alpha - \bar y\|_{L^2(\Omega)} = \mathcal{O}(\alpha^{\frac{d+1}2})$ for $\alpha\searrow0$.
Lemma \ref{lem:PDEstab} then proves the claim for the convergence of the adjoint states.
\end{proof}

\begin{remark}
The convergence rates obtained in Theorem \ref{thm:rates_u} and Corollary \ref{cor:rates_yp} resemble the rates obtained for the control of a linear partial differential equation,
see \cite{daniels2016,daniels2017}, which improved on the results of \cite{wachsmuth2011}.
\end{remark}

\section{Necessity of the regularity condition}\label{sec:necessary}

In this section we will show that the regularity assumption \ref{ass:ActiveSet} is necessary to
obtain the convergence rates provided by Theorem \ref{thm:rates_u}.
In the case of a linear state equation,
such results were obtained in \cite{daniels2016,daniels2017,wachsmuth2013necessary}.
As it turns out, these results can be transferred to the nonlinear case with suitable modifications.

\begin{theorem}
Let us assume that $\{ x \in \Omega: \; \bar p(x) = 0 \} \subset A^c$ holds for some given set $A \subset \Omega$.
Furthermore assume that there exists a constant $\sigma > 0$ such that
\[
u_a(x) \leq - \sigma < 0 < \sigma \leq u_b(x) \quad \text{ f.a.a. } x \in \Omega.
\]
Let $(u_\alpha)_\alpha$ be a family of stationary points of \eqref{eq:tikhonov_problem}.
Suppose that
\[
 \|\bar u - u_\alpha\|_{L^1(A)} + \|\bar p - p_\alpha\|_{L^\infty(A)} = \mathcal{O}(\alpha^\kappa)
\]
for some $\kappa>1$ and all $\alpha > 0$ sufficiently small. Then there is $c>0$ such that the relation
\[
\meas\left(\{ x \in A: \; |\bar p(x)| \leq \eps \}\right) \leq c\, \eps^\kappa
\]
is fulfilled for all $\eps>0$ sufficiently small.
\end{theorem}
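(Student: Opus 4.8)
The plan is to use the pointwise projection formulas satisfied by $\bar u$ and by the stationary points $u_\alpha$, and to exploit that $\kappa>1$ makes the adjoint error on $A$ negligible compared with $\alpha$. First I would record the pointwise consequences of the optimality conditions: from \eqref{eq:fo3} one has $\bar u(x)=u_a(x)$ wherever $\bar p(x)>0$ and $\bar u(x)=u_b(x)$ wherever $\bar p(x)<0$, so (since $\{\bar p=0\}\subset A^c$ by hypothesis) $\bar u$ is bang-bang on $A$; and from the variational inequality characterizing the stationary point $u_\alpha$ one has $u_\alpha(x)=P_{[u_a(x),u_b(x)]}(-p_\alpha(x)/\alpha)$ for a.a.\ $x\in\Omega$.

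The core step is a pointwise lower bound for $|\bar u-u_\alpha|$ on the sublevel set of $|\bar p|$ of size proportional to $\alpha$. Fix $\alpha>0$ small and let $C>0$ be such that $\|\bar u-u_\alpha\|_{L^1(A)}\le C\alpha^\kappa$ and $\|\bar p-p_\alpha\|_{L^\infty(A)}\le C\alpha^\kappa$. Take $x\in A$ with $0<\bar p(x)\le \alpha\sigma/2$. Dividing the adjoint error bound by $\alpha$ and using $\kappa>1$, for $\alpha$ small enough (so that $C\alpha^{\kappa-1}\le\sigma/4$) we obtain $-p_\alpha(x)/\alpha\in[-3\sigma/4,\,\sigma/4]$. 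Since $u_a(x)\le-\sigma<-3\sigma/4$ and $u_b(x)\ge\sigma>\sigma/4$, this value lies strictly inside $[u_a(x),u_b(x)]$, so the projection is inactive and $u_\alpha(x)=-p_\alpha(x)/\alpha\ge-3\sigma/4$. As $\bar u(x)=u_a(x)\le-\sigma$, it follows that $|\bar u(x)-u_\alpha(x)|=u_\alpha(x)-u_a(x)\ge\sigma/4$. The case $-\alpha\sigma/2\le\bar p(x)<0$ is symmetric, with $\bar u(x)=u_b(x)$.

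Finally I would integrate this bound. Put $E_\alpha:=\{x\in A:\ |\bar p(x)|\le\alpha\sigma/2\}$; since $\bar p\neq0$ on $A$, every point of $E_\alpha$ falls into one of the two cases above, so $|\bar u-u_\alpha|\ge\sigma/4$ on $E_\alpha$, whence
\[
\frac{\sigma}{4}\,\meas(E_\alpha)\le\int_A|\bar u-u_\alpha|\dx=\|\bar u-u_\alpha\|_{L^1(A)}\le C\alpha^\kappa ,
\]
and therefore $\meas(E_\alpha)\le\frac{4C}{\sigma}\,\alpha^\kappa$. Given $\eps>0$ small, choosing $\alpha:=2\eps/\sigma$ (which is admissibly small for small $\eps$) gives $\{x\in A:\ |\bar p(x)|\le\eps\}=E_\alpha$, so $\meas(\{x\in A:\ |\bar p(x)|\le\eps\})\le\frac{4C}{\sigma}(2/\sigma)^\kappa\,\eps^\kappa=:c\,\eps^\kappa$, which is the claim.

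I expect the only genuinely delicate point to be the middle step, and specifically the role of the assumption $\kappa>1$: it is precisely this that makes $\|\bar p-p_\alpha\|_{L^\infty(A)}/\alpha\to0$, which is what forces the projection defining $u_\alpha$ to be inactive on the $O(\alpha)$-sublevel set of $|\bar p|$ and thereby separates $u_\alpha$ from the bang-bang value $\bar u$ there. For $\kappa\le1$ this mechanism breaks down, consistent with the fact that the statement concerns convergence rates of order $\kappa>1$.
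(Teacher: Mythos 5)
Your proof is correct: the pointwise projection formula $u_\alpha = P_{[u_a,u_b]}(-p_\alpha/\alpha)$, the separation $|\bar u - u_\alpha|\ge \sigma/4$ on $\{x\in A:\,|\bar p(x)|\le \alpha\sigma/2\}$ (where $\kappa>1$ is exactly what makes $\|\bar p-p_\alpha\|_{L^\infty(A)}/\alpha$ negligible so the projection is inactive), and the concluding Chebyshev-type integration with the choice $\alpha=2\eps/\sigma$ are all sound. The paper does not write out this proof but defers to the analogous linear-state result in its cited reference, whose underlying argument is precisely the one you reconstructed, so your route is essentially the same --- with the benefit of being self-contained.
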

\begin{proof}
 The proof is analogous to that of the corresponding result \cite[Thm.\@ 13]{daniels2017}. As this proof only uses the variational inequality \eqref{eq:fo3},
 it can be transferred to our situation without modifications.
\end{proof}

Second, we will show that the source condition is satisfied on the inactive set $\{x \in \Omega: \; \bar p(x) = 0\}$
if the convergence rate is sufficiently large. For a related result concerning the regularization of
 an ill-posed nonlinear operator equation we refer to \cite{neubauer1989}.

\begin{theorem}
Let $(u_\alpha)_\alpha$ be a family of stationary points of \eqref{eq:tikhonov_problem}
converging weakly to $\bar u\in \Uad$ in $L^2(\Omega)$.
Suppose the convergence rate $\|y_\alpha - \bar y\|_{L^2(\Omega)} = \mathcal{O}(\alpha)$
holds for $\alpha\searrow0$.
Then there exists a function $w \in L^2(\Omega)$ such that $\bar u = P_\Uad\left( S'(\bar u)^\ast w \right)$ holds pointwise
almost everywhere on the set $K:=\{x \in \Omega: \; \bar p(x) = 0\}$.

If in addition $\|y_\alpha - \bar y\|_{L^2(\Omega)}= o(\alpha)$ holds, then $\bar u$ vanishes on $K$.
\end{theorem}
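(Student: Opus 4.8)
The plan is to exploit the variational inequality for the stationary points $u_\alpha$, pass to the limit, and identify the weak limit of the sequence $\alpha^{-1}(\bar y - y_\alpha)$ (suitably interpreted) as the sought function $w$. First I would write the first-order condition for $u_\alpha$ as the projection formula $u_\alpha = P_{\Uad}(-\alpha^{-1} p_\alpha)$, and observe that on the set $K = \{\bar p = 0\}$ we have (by Lemma~\ref{lem:PDEstab} and the weak convergence $u_\alpha \rightharpoonup \bar u$) that $p_\alpha \to \bar p = 0$ in $L^\infty(\Omega)$, so the argument $-\alpha^{-1}p_\alpha$ of the projection need not be small and all the information sits in the quotient $p_\alpha/\alpha$. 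The key quantitative input is the rate $\|y_\alpha - \bar y\|_{L^2(\Omega)} = \mathcal O(\alpha)$: combined with Lemma~\ref{lem:PDEstab} this gives $\|p_\alpha - \bar p\|_{L^\infty(\Omega)} = \mathcal O(\alpha)$, hence $\|p_\alpha - \bar p\|_{L^2(\Omega)} = \mathcal O(\alpha)$, so the sequence $v_\alpha := -\alpha^{-1}(p_\alpha - \bar p) = -\alpha^{-1}p_\alpha$ restricted to $K$ is bounded in $L^2(K)$.

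Next I would like to realize $v_\alpha$ as $S'(\bar u)^\ast$ applied to something bounded. The natural candidate: from the adjoint equations for $p_\alpha$ and $\bar p$, the difference $p_\alpha - \bar p$ solves a linear elliptic equation whose right-hand side is, to leading order, $y_\alpha - \bar y$ plus lower-order terms coming from $f'(y_\alpha) - f'(\bar y)$; dividing by $\alpha$, the quotient $\alpha^{-1}(y_\alpha - \bar y)$ is bounded in $L^2(\Omega)$ by the assumed rate, so along a subsequence it converges weakly in $L^2(\Omega)$ to some limit $w \in L^2(\Omega)$. One then checks that $-\alpha^{-1}p_\alpha \rightharpoonup S'(\bar u)^\ast w$ weakly in $L^2(\Omega)$: this uses that $S'(\bar u)^\ast$ is the solution operator of the \emph{linearized} adjoint equation $A^\ast q + f'(\bar y) q = (\cdot)$, that the coefficients $f'(y_\alpha) \to f'(\bar y)$ in $L^\infty$, and a standard perturbation argument (as in Lemma~\ref{lem:PDEstab} and Lemma~\ref{lemma}) to absorb the error terms. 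Finally, passing to the limit in $u_\alpha = P_{\Uad}(-\alpha^{-1}p_\alpha)$ restricted to $K$: since $P_{\Uad}$ is Lipschitz and (being a pointwise projection onto an order interval) is weakly sequentially closed in the appropriate sense, or — more carefully — since on $K$ one has $u_\alpha \rightharpoonup \bar u$ and $-\alpha^{-1}p_\alpha \rightharpoonup S'(\bar u)^\ast w$, one concludes $\chi_K \bar u = \chi_K P_{\Uad}(S'(\bar u)^\ast w)$ a.e. For the sharper statement: if $\|y_\alpha - \bar y\|_{L^2(\Omega)} = o(\alpha)$, then the same argument gives $\alpha^{-1}(y_\alpha - \bar y) \to 0$, hence $w = 0$, hence $S'(\bar u)^\ast w = 0$, and $\chi_K \bar u = \chi_K P_{\Uad}(0) = 0$ since $u_a \le 0 \le u_b$ on $K$ (which follows from $0 \in \Uad$ pointwise, or is assumed); thus $\bar u$ vanishes on $K$.

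The main obstacle I anticipate is the passage to the limit in the projection formula on $K$: $P_{\Uad}$ is nonlinear, so weak convergence of both $u_\alpha$ and the projection arguments does not immediately transfer through $P_{\Uad}$. The way around this is to use that on $K$ the relevant inequalities are pointwise — $u_\alpha = P_{\Uad}(-\alpha^{-1}p_\alpha)$ is equivalent to the pointwise variational inequality $(-\alpha^{-1}p_\alpha - u_\alpha)(v - u_\alpha) \le 0$ for all $v \in [u_a, u_b]$ — and to pass to the limit in the integrated form $\int_K (-\alpha^{-1}p_\alpha - u_\alpha)(v - u_\alpha)\,\dx \le 0$ using the weak convergence of $-\alpha^{-1}p_\alpha$ together with the \emph{strong} convergence of $u_\alpha \to \bar u$ in $L^2(K)$ (which comes from strong state convergence, Lemma~\ref{lem:statestrongconv}, upgraded via the projection structure — or is available directly since weak convergence plus the rate forces strong convergence here). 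This yields $(S'(\bar u)^\ast w - \bar u)(v - \bar u) \le 0$ on $K$ for all admissible $v$, i.e. $\bar u = P_{\Uad}(S'(\bar u)^\ast w)$ a.e. on $K$, as desired. A secondary technical point is ensuring the error terms $(f'(y_\alpha) - f'(\bar y))\bar p$ and $(f'(y_\alpha) - f'(\bar y))p_\alpha$, after division by $\alpha$, remain controlled; here one uses $\|f'(y_\alpha) - f'(\bar y)\|_{L^\infty} \le C_{f,M}\|y_\alpha - \bar y\|_{L^\infty} \cdot$ (bounded) and the state rate again, so these terms are $\mathcal O(1)$ in $L^2$ and their contribution either converges weakly to a term that is absorbed into $w$ or, under the $o(\alpha)$ hypothesis, vanishes.
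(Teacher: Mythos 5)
Your proposal is correct and follows essentially the same route as the paper: bound the difference quotients $\alpha^{-1}(y_\alpha-\bar y)$ and $\alpha^{-1}(p_\alpha-\bar p)$ using the assumed rate and Lemma~\ref{lem:PDEstab}, extract weak limits $\dot y,\dot p$, identify $\dot p=S'(\bar u)^\ast w$ from the limit of the adjoint equation, and pass to the limit in the variational inequality restricted to $K$. The only presentational differences are that the paper performs the last step by testing with $\hat u\in\Uad$ satisfying $\hat u=\bar u$ on $\Omega\setminus K$ rather than via the pointwise projection formula, and that the final source element is $w=(1-f''(\bar y)\bar p)\dot y$ rather than $\dot y$ itself --- precisely the ``absorption'' of the term $(f'(y_\alpha)-f'(\bar y))p_\alpha/\alpha$ that you anticipate in your last paragraph.
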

\begin{proof}
By assumptions, we know $u_\alpha \to \bar u$ in $L^2(\Omega)$, $\|y_\alpha - \bar y\|_{L^2(\Omega)} = \mathcal{O}(\alpha)$,
and $\|p_\alpha - \bar p\|_Y = \mathcal{O}(\alpha)$, which is a consequence of Lemma \ref{lem:PDEstab}.

Let $\hat u\in \Uad$ be given with $\hat u=\bar u$ on $\Omega\setminus K$. This implies $(\bar p,\,\hat u-\bar u)_{L^2(\Omega)}=0$
and
\[
 0\le (\bar p,\,u_\alpha-\bar u)_{L^2(\Omega)}= (\bar p,\,u_\alpha-\hat u)_{L^2(\Omega)}.
\]
Testing the variational inequality \eqref{eq:variational_ua} with $\tilde u$ and adding it to the above
leads to
\be\label{eq508}
 (\alpha u_\alpha + p_\alpha - \bar p, \hat u - u_\alpha)_{L^2(\Omega)} \geq 0.
\ee
As in the predecessor works mentioned above, the idea of the proof is to divide this inequality by $\alpha$ and then to pass to the limit $\alpha\searrow0$. Hence, we investigate the difference quotient $\frac1\alpha(p_\alpha-\bar p)$.
Using the defining equations of $p_\alpha$ and $\bar p$, we find that $p_\alpha-\bar p$ solves
\be\label{eq509}
\begin{aligned}
    A^\ast (p_\alpha-\bar p) +  f'(\bar y) ( p_\alpha -\bar p) + (f'(y_\alpha)-f'(\bar y))p_\alpha &=  y_\alpha -\bar y &&\text{ in } \Omega, \\
    p_\alpha -\bar p&= 0 &&\text{ on } \partial \Omega.
\end{aligned}
\ee
Let us write
\[
 f'(y_\alpha)-f'(\bar y) =  \int_0^1 f''(\bar y + s(y_\alpha-\bar y))\ds\ (y_\alpha-\bar y).
\]
Since $y_\alpha-\bar y$ is uniformly bounded in $L^\infty(\Omega)$ by Theorem \ref{thm:pdeunique}, the assumptions on $f$ and the Lebesgue dominated convergence theorem
imply $\int_0^1 f''(\bar y + s(y_\alpha-\bar y))\ds \to f''(\bar y)$ in $L^2(\Omega)$.

Let now $\dot y$ and $\dot p$ be subsequential weak limit points of $( \alpha^{-1}(y_\alpha-\bar y))$ and $( \alpha^{-1}(p_\alpha-\bar p))$
in $L^2(\Omega)$ and $H^1_0(\Omega)$, respectively. Dividing \eqref{eq509} by $\alpha$ and passing to the limit $\alpha\searrow0$
 yields
\[
\begin{aligned}
    A^\ast\dot p+  f'(\bar y)\dot p + f''(\bar y) \dot y \bar p  &=  \dot y&&\text{ in } \Omega, \\
    \dot p&= 0 &&\text{ on } \partial \Omega.
\end{aligned}
\]
Note that the assumptions imply $p_\alpha\to \bar p$ in $L^\infty(\Omega)$.
This shows
\[
 \dot p = S'(\bar u)^*(\dot y - f''(\bar y) \dot y \bar p) =:  S'(\bar u)^*w.
\]
with $w:=(1- f''(\bar y)\bar p) \dot y \in L^2(\Omega)$.
Dividing the variational inequality \eqref{eq508} by $\alpha$ and passing  to the limit $\alpha\searrow0$
we find
\[
 (\bar u + \dot p, \hat u - \bar u)_{L^2(\Omega)} \geq 0.
\]
Since $\hat u\in\Uad$ was arbitrary with the restriction $\hat u=\bar u$ on $\Omega\setminus K$,
this inequality implies
\[
 \chi_K \bar u  = \chi_K P_\Uad(-S'(\bar u)^*w).
\]
If in addition we have $\|y_\alpha - \bar y\|_{L^2(\Omega)} = o(\alpha)$, then we obtain $\|p_\alpha - \bar p\|_Y=o(\alpha)$. This implies that $\alpha^{-1}(p_\alpha-\bar p)$ converges to zero in $L^\infty(\Omega)$.
Passing to the limit in \eqref{eq508} gives $ \chi_K \bar u  = \chi_K P_\Uad(0)$, hence $\bar u=0$ holds almost everywhere on $K$.
\end{proof}

\begin{remark}
Let us point out an interesting reformulation of the source condition in terms of the Lagrangian.
To this end, let us introduce the Lagrange function to problem \eqref{eq:main_problem} by
\[
 \mathcal L(y,u,p):= J(y) - \langle Ay + f(y)-u,p\rangle.
\]
Then the result of the previous theorem can be written as: There exists $\dot y\in L^2(\Omega)$ such that
\[
  \chi_K \bar u  = \chi_K P_\Uad\Big(-S'(\bar u)^*(\mathcal L_{yy}(\bar u,\bar y,\bar p)\dot y)\Big).
\]
Here, $\mathcal L_{yy}$ denotes the partial derivative of second order of $L$ with respect to $y$
interpreted as a linear and continuous mapping from $L^2(\Omega)$ to $L^2(\Omega)$.

In case of a linear state equation, we obtain $L_{yy}=\mathrm{id}$. In this case, the theorem above reduces to the results obtained in \cite{wachsmuth2013necessary}.

In addition, the above results resemble results for nonlinear inverse problems from \cite{neubauer1989}.
Under the assumptions $\Uad=L^2(\Omega)$ and $\bar y= y_d$ (exact and attainable data), the source condition reduces to
\[
 \bar u = -S'(\bar u) \dot y.
\]
Here, we used that $\bar y=y_d$ implies $\bar p=0$ and $L_{yy}(\bar y,\bar u,\bar p)=\mathrm{id}$.
\end{remark}

\section{Extension to sparse control problems}\label{sec:sparsity}
In this section we consider the problem
\begin{equation}\label{eq:main_problem_sparse}\tag{\textbf{S}}
 \begin{split}
    \text{Minimize} &\quad F(u) = J(u) + \beta j(u) =  \frac{1}{2}\|y_u - y_d\|_{L^2(\Omega)}^2 + \beta \|u\|_{L^1(\Omega)}  \\
    \text{such that} &\quad u_a \leq u \leq u_b \quad \text{a.e. in } \Omega,
\end{split}
\end{equation}
with $J(u) = \frac{1}{2}\|y_u - y_d\|_{L^2(\Omega)}^2$, $j(u) =  \|u\|_{L^1(\Omega)}$, and $\beta>0$. The motivation for the additional $L^1$-term in the cost functional $F$ is the following. A solution $\bar u$ of \eqref{eq:main_problem_sparse} is sparse, i.e. large parts of $\bar u$ are identically zero. The larger $\beta$, the smaller the support of $\bar u$. One possible application of such a model is the optimal placement of controllers, since in many cases it is not desirable to control the system from the whole domain $\Omega$. Starting with the pioneering work \cite{stadler}, such sparsity related control problems have been studied in, e.g.,
\cite{wachsmuth2013necessary,wachsmuth2011c,wachsmuth2011b} for optimal control of linear partial differential equations and \cite{casas2012,CasasHerzogWachsmuth2012} for the optimal control of semi-linear equations.

In order to simplify the exposition, we assume $u_a(x) \leq 0 \leq u_b(x)$ almost everywhere in $\Omega$.
Our aim is to investigate so called bang-bang-off solutions, i.e., $\bar u \in \{u_a(x), 0, u_b(x)\}$ almost everywhere in $\Omega$.
The necessary optimality conditions for problem \eqref{eq:main_problem_sparse} are given by:
\begin{equation}\label{eq:fo1sp}
  \begin{aligned}
    A\bar y + f(\bar y) &= \bar u \text{ in } \Omega, \\
    \bar y &= 0 \text{ on } \partial \Omega,
\end{aligned}
\end{equation}
\begin{equation}\label{eq:fo2sp}
  \begin{aligned}
    A^\ast \bar p +  f'(\bar y)\bar p &= \bar y - y_d \text{ in } \Omega, \\
    \bar y &= 0 \text{ on } \partial \Omega,
\end{aligned}
\end{equation}
\begin{equation}\label{eq:fo3sp}
\int\limits_\Omega (\bar p + \beta \bar \lambda) (u - \bar u) \; \dx \geq 0 \quad \forall u \in \Uad
\end{equation}
with $\bar \lambda \in \partial \|\bar u\|_{L^1(\Omega)}$.
We refer to \cite{casas2012} for proofs.
Again we consider the Tikhonov regularization of problem \eqref{eq:main_problem_sparse}
given by
\begin{equation}\label{eq:tikhonov_problem_sparse}\tag{$\textbf{S}_\alpha$}
 \begin{split}
    \text{Minimize} &\quad F_\alpha(u) = \frac{1}{2}\|y_u - y_d\|_{L^2(\Omega)}^2 + \beta \|u\|_{L^1(\Omega)} + \frac{\alpha}{2}\|u\|_{L^2(\Omega)}^2 \\
    \text{such that} &\quad u_a \leq u \leq u_b \quad \text{a.e. in } \Omega.
\end{split}
\end{equation}

The following convergence result can be proven similarly to the related result of Theorem \ref{thm:localstrongconv}.
\begin{theorem}\label{thm:localstrongconv_sparse}
Let $\bar u$ be a strict local solution of \eqref{eq:main_problem_sparse}. Then there exist $\rho > 0$ and a  family $(u_\alpha)_{\alpha \in (0, \bar \alpha)}$ of local solutions of \eqref{eq:tikhonov_problem_sparse} such that $u_\alpha \to \bar u$ in $L^2(\Omega)$ and every $u_\alpha$ is a global minimum of $F_\alpha$ in $\Uadrho := \Uad \cap \{ v \in L^2(\Omega): \; \|v - \bar u\|_{L^2(\Omega)} \leq \rho \}$
\end{theorem}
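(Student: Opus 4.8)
The plan is to mimic the proof of Theorem~\ref{thm:localstrongconv} almost verbatim, replacing $J_\alpha$ by $F_\alpha$ and $J_0$ by $F$, and to check that the only two ingredients used there — namely weak lower semicontinuity of the regularized functional and strong continuity of the control-to-state map on bounded sets (Lemma~\ref{lem:statestrongconv}) — remain valid after adding the convex term $\beta\|u\|_{L^1(\Omega)}$. First I would fix $\rho>0$ small enough that $\bar u$ is the unique global minimizer of $F$ on $\Uadrho:=\Uad\cap\{v:\|v-\bar u\|_{L^2(\Omega)}\le\rho\}$; this is possible because $\bar u$ is a \emph{strict} local solution of \eqref{eq:main_problem_sparse}. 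Then, for each $\alpha>0$, I would let $u_{\rho,\alpha}$ be a global solution of the auxiliary problem $\min\{F_\alpha(u):u\in\Uadrho\}$, which exists since $\Uadrho$ is bounded, closed and convex in $L^2(\Omega)$, the map $u\mapsto\|y_u-y_d\|_{L^2}^2$ is weakly lower semicontinuous by Lemma~\ref{lem:statestrongconv}, and both $u\mapsto\|u\|_{L^1(\Omega)}$ and $u\mapsto\|u\|_{L^2(\Omega)}^2$ are convex and continuous, hence weakly lower semicontinuous.

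Next I would extract a subsequence $\alpha_k\to0$ with $u_{\rho,\alpha_k}\rightharpoonup u_0$ in $L^2(\Omega)$ (possible by boundedness of $\Uadrho$) and argue, exactly as in Lemma~\ref{lem:strongglobalconv}, that $u_0$ is a global minimizer of $F$ on $\Uadrho$: on one hand $F(u_0)\le\liminf_k F(u_{\rho,\alpha_k})=\liminf_k F_{\alpha_k}(u_{\rho,\alpha_k})\le\limsup_k F_{\alpha_k}(v)=F(v)$ for every $v\in\Uadrho$, using that $y_{u_{\rho,\alpha_k}}\to y_{u_0}$ strongly in $Y$ and $\|u_{\rho,\alpha_k}\|_{L^1}$ is lower semicontinuous under weak convergence, while $\tfrac{\alpha_k}{2}\|v\|_{L^2}^2\to0$. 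By uniqueness of the global minimizer on $\Uadrho$ this forces $u_0=\bar u$. The strong convergence $u_{\rho,\alpha}\to\bar u$ in $L^2(\Omega)$ then follows by the same norm-convergence trick: testing optimality of $u_{\rho,\alpha}$ against $\bar u$ and optimality of $\bar u$ against $u_{\rho,\alpha}$ yields $\|u_{\rho,\alpha}\|_{L^2(\Omega)}\le\|\bar u\|_{L^2(\Omega)}$, which combined with weak convergence and weak lower semicontinuity of the norm gives $\|u_{\rho,\alpha}\|_{L^2(\Omega)}\to\|\bar u\|_{L^2(\Omega)}$, hence strong convergence. (Since the limit is the same $\bar u$ for every weakly convergent subsequence, the full family converges.)

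Finally, strong convergence $u_{\rho,\alpha}\to\bar u$ gives $\bar\alpha>0$ with $\|u_{\rho,\alpha}-\bar u\|_{L^2(\Omega)}<\rho$ for all $\alpha<\bar\alpha$; for such $\alpha$ the constraint $\|u-\bar u\|_{L^2(\Omega)}\le\rho$ is inactive at $u_{\rho,\alpha}$, so $u_{\rho,\alpha}$ is a local minimizer of $F_\alpha$ on all of $\Uad$, and by construction it is a global minimizer of $F_\alpha$ on $\Uadrho$, which is exactly the asserted statement. I expect no serious obstacle: the only point requiring a word of care is that the added $L^1$-term does not destroy the limiting inequalities, and this is handled simply by noting that $\|\cdot\|_{L^1(\Omega)}$ is convex and strongly continuous on $L^2(\Omega)$ (so weakly sequentially lower semicontinuous), and that it carries over the strong convergence $F_{\alpha_k}(v)\to F(v)$ for fixed $v$ since the $L^1$-term of $v$ is $\alpha$-independent and the state term converges by Lemma~\ref{lem:statestrongconv}.
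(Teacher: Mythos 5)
Your proposal is correct and is essentially the paper's own argument: the authors prove this result by pointing to the proof of Theorem \ref{thm:localstrongconv} and adapting it, which is exactly what you do — localizing to $\Uadrho$, taking global minimizers of $F_\alpha$ there, identifying the weak limit with $\bar u$ via strict local optimality, upgrading to strong convergence by the norm-comparison trick, and noting the ball constraint becomes inactive for small $\alpha$. Your explicit check that the added $L^1$-term is convex and strongly continuous (hence weakly lower semicontinuous) is precisely the only new ingredient needed, so nothing is missing.
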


Since $j$ is not twice differentiable, we follow \cite{casas2012} and consider the modified extended critical cone
defined by
\begin{multline*}
\tilde C_{u}^{\tau} = \Big\{ v \in L^2(\Omega): \; v(x)  \geq 0 \mbox{ if } u(x) = u_a(x),
\  v(x)\leq 0  \mbox{ if } u(x) = u_b(x),\\
 \text{and }  J'(u)v + \beta j'(u,v) \leq \tau \|z_v\|_{L^2(\Omega)} \Big\}.
\end{multline*}

The second order condition for the sparse control problems reads as follows:
{
\renewcommand{\theassumption}{\textbf{SSC 2}}
\begin{assumption}[Sufficient second order condition]\label{ass:SecondOrder_sparse}
Let $\bar u \in \Uad$ be given. Assume that there exists $\delta > 0$ and $\tau > 0$ such that
\[
J''(\bar u) v^2 \geq \delta \|z_v\|_{L^2(\Omega)}^2 \quad \forall v \in \tilde C_{u}^{\tau}.
\]
\end{assumption}
}

This second order condition induces local quadratic growth of the cost functional. The next theorem is due to \cite[Theorem 3.6]{casas2012}.

\begin{theorem}\label{thm:ssc_sparse}
Let us assume that $\bar u$ is a feasible control for problem \eqref{eq:main_problem_sparse} with state $\bar y$ and adjoint state $\bar p$ satisfying the first order optimality conditions \eqref{eq:fo1sp}--\eqref{eq:fo3sp} and the second order condition \ref{ass:SecondOrder_sparse}. Then, there exists $\eps > 0$ such that
\[
F(\bar u) + \frac{\delta}{5}\|z_{u - \bar u}\|_{L^2(\Omega)}^2 \leq F(u) \quad \forall u \in B_{\eps}(\bar u) \cap \Uad.
\]
\end{theorem}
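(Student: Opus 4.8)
The plan is to mimic the proof of Theorem \ref{thm:ssc} for the smooth problem, carefully tracking the extra $L^1$-term. The key object is the difference $F(u)-F(\bar u)$ for $u\in\Uad$ close to $\bar u$. I would write $u = \bar u + v$ with $v = u-\bar u$, and split $F(u)-F(\bar u) = \big(J(u)-J(\bar u)\big) + \beta\big(j(u)-j(\bar u)\big)$. For the smooth part I would use a second-order Taylor expansion $J(u)-J(\bar u) = J'(\bar u)v + \tfrac12 J''(\bar u)v^2 + r_J(v)$, where the remainder satisfies $|r_J(v)|\le \tfrac{\delta}{20}\|z_v\|_{L^2}^2$ for $\|u-\bar u\|$ small enough; this last estimate follows from Lemma \ref{lemma:derivJ}, the uniform bound \eqref{eq123}, the Lipschitz/continuity properties of $f''$ in (A1), and the elliptic estimates of Lemma \ref{lem:PDEstab}-type applied to $z_v$, exactly as in Casas's argument. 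For the nonsmooth part I would use convexity of $j$ to get $j(u)-j(\bar u)\ge j'(\bar u, v)$ where $j'(\bar u,\cdot)$ denotes the (one-sided) directional derivative, i.e.\ $j'(\bar u,v)=\int_\Omega \bar\lambda v\dx$ for a suitable subgradient $\bar\lambda$.

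Next I would invoke the first-order condition \eqref{eq:fo3sp}: $J'(\bar u)v + \beta j'(\bar u,v)\ge 0$ for all feasible $u$, where I use $\langle \bar p + \beta\bar\lambda, v\rangle \ge 0$ and $J'(\bar u)v = \langle\bar p,v\rangle$ from Lemma \ref{lemma:derivJ}. Combining, $F(u)-F(\bar u) \ge \tfrac12 J''(\bar u)v^2 - \tfrac{\delta}{20}\|z_v\|_{L^2}^2$. The crux is now a dichotomy on whether $v$ lies in the modified critical cone $\tilde C_{\bar u}^\tau$.

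\emph{Case 1: $v\in\tilde C_{\bar u}^\tau$.} Then \ref{ass:SecondOrder_sparse} gives $J''(\bar u)v^2\ge\delta\|z_v\|_{L^2}^2$, so $F(u)-F(\bar u)\ge \big(\tfrac{\delta}{2}-\tfrac{\delta}{20}\big)\|z_v\|_{L^2}^2 \ge \tfrac{\delta}{5}\|z_v\|_{L^2}^2$, as desired. \emph{Case 2: $v\notin\tilde C_{\bar u}^\tau$.} Since the sign conditions in the definition of $\tilde C_{\bar u}^\tau$ are automatically satisfied by any feasible $v=u-\bar u$, the only way to fall outside is $J'(\bar u)v + \beta j'(\bar u,v) > \tau\|z_v\|_{L^2}$. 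Combined with the first-order inequality this gives a genuinely positive lower bound $J'(\bar u)v+\beta j'(\bar u,v)\ge\tau\|z_v\|_{L^2}$, i.e.\ the linear term dominates. I would then absorb the (controlled) second-order remainder against this positive linear term: since $r_J(v)=o(\|z_v\|_{L^2}^2)$ and $\|z_v\|_{L^2}\le c\|v\|_{L^2}$ is small, for $\eps$ small enough one has $\tfrac12 J''(\bar u)v^2 + r_J(v) \ge -\tfrac{\tau}{2}\|z_v\|_{L^2}$ (using uniform boundedness of $J''(\bar u)v^2$ by $C\|z_v\|_{L^2}^2$), hence $F(u)-F(\bar u)\ge \tfrac{\tau}{2}\|z_v\|_{L^2} \ge \tfrac{\delta}{5}\|z_v\|_{L^2}^2$ once $\|z_v\|_{L^2}$ is below $\tfrac{2\tau}{5\delta}$, which holds after possibly shrinking $\eps$.

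\emph{Main obstacle.} The delicate point is Case 2: one must make sure that shrinking the neighborhood $B_\eps(\bar u)$ simultaneously (i) controls the Taylor remainder $r_J(v)$ relative to $\|z_v\|_{L^2}^2$, (ii) keeps $\|z_v\|_{L^2}$ small enough that a linear-in-$\|z_v\|_{L^2}$ bound beats the quadratic target $\tfrac{\delta}{5}\|z_v\|_{L^2}^2$, and (iii) does so with a single $\eps$ independent of $v$. This is exactly the cone-splitting device of Casas \cite{casas2012}; the semilinear structure enters only through the remainder estimates, which are already available via (A1), \eqref{eq123}, Theorem \ref{thm:pdeunique} and Lemma \ref{lem:PDEstab}. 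A secondary point is that the constant degrades from $\tfrac{\delta}{9}$ (state-norm growth in Theorem \ref{thm:ssc}) to $\tfrac{\delta}{5}$ (here stated in the $z_v$-norm), so one should be slightly more generous in the remainder bookkeeping; I would simply keep all error terms below $\tfrac{\delta}{20}\|z_v\|_{L^2}^2$ to land comfortably above $\tfrac{\delta}{5}$.
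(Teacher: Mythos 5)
Your proposal is correct and follows essentially the same route as the paper, which does not prove this theorem itself but cites \cite[Theorem 3.6]{casas2012}: the argument there is exactly your combination of a second-order Taylor expansion of $J$, convexity of $j$, the first-order inequality, and the dichotomy on membership in $\tilde C_{\bar u}^\tau$, with the remainder controlled uniformly on a ball via the Lipschitz continuity of $S$ and the continuity of $J''$ (the paper's Lemma \ref{lemma}). The only blemishes are cosmetic: $j'(\bar u;v)$ is the directional derivative, which dominates $\int_\Omega \bar\lambda v\dx$ rather than equalling it (the inequality still goes the right way), and the threshold in Case 2 should read $\|z_v\|_{L^2}\le \tfrac{5\tau}{2\delta}$, of which your $\tfrac{2\tau}{5\delta}$ is a harmless strengthening.
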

The variational inequality \eqref{eq:fo3sp} implies
the following relations between $\bar u$ and $\bar p$
\[\bar u(x)  \begin{cases}
= u_a(x) & \text{if } \bar p(x) > \beta\\
\in [u_a(x),0] & \text{if } \bar p(x) = \beta\\
= 0 & \text{if } |\bar p(x)| < \beta\\
\in [0,u_b(x)] & \text{if } \bar p(x) = -\beta\\
= u_b(x) & \text{if } \bar p(x) <- \beta
\end{cases}\]
see \cite{casas2012,stadler}.
Hence,  we have to modify the regularity assumption \ref{ass:ActiveSet} to take the influence of the
non-smooth term $j$ into account, see also \cite{wachsmuth2013necessary,wachsmuth2011c}.

{
\renewcommand{\theassumption}{\textbf{ASC 2}}
\begin{assumption}[Active Set Condition]\label{ass:ActiveSet_sparse}
Let $\bar u$ be a local solution of \eqref{eq:main_problem} and assume that there exists a set $I \subseteq \Omega$, a function $w \in Y$, and positive constants $\kappa, c$ such that the following holds

\begin{enumerate}
  \item (source condition) $I \supset \{ x \in \Omega: \; |\bar p(x)| = \beta \}$  and
  \[\chi_I \bar u = \chi_I P_\Uad (S'(\bar u)^\ast w),\]
  \item (structure of active set) $A := \Omega \setminus I$ and for all $\eps > 0$
  \[|\{ x\in A: \; 0 < \big| |\bar p(x)| - \beta \big| < \eps  \}| \leq c \eps^\kappa,\]
  \item (regularity of solution) $S'(\bar u)^*w\in L^\infty(\Omega)$.
\end{enumerate}
\end{assumption}
}


Note that if $\bar u$ satisfies this condition with $A = \Omega$ it exhibits a bang-bang-off structure,  and the set $\{x \in \Omega: \; |\bar p(x)| = \beta\}$ is a set of measure zero. Again we can establish an improved first order necessary condition:

\begin{lemma}\label{lem:improvedfirstorder_sparse}
Let $\bar u$ satisfy assumption \ref{ass:ActiveSet_sparse}, then
\[
J'(\bar u)(u-\bar u) + \beta j'(\bar u; u - \bar u)   \ge c \|u-\bar u\|_{L^1(A)}^{1+\frac{1}{\kappa}} \quad \forall u \in \Uad.
\]
\end{lemma}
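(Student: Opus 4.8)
The plan is to follow the proof of Theorem~\ref{thm:improvedfirstorder}, replacing the switching function $\bar p$ by the shifted quantity $\big||\bar p|-\beta\big|$, which measures the distance to the bang-bang-off switching set $\{|\bar p|=\beta\}$ of the sparse problem. The argument splits into a pointwise reduction to a weighted $L^1(A)$-lower bound and a scalar ``distribution function'' estimate; only the latter uses part~(2) of Assumption~\ref{ass:ActiveSet_sparse}, whereas part~(1) enters merely through the inclusion $\{|\bar p|=\beta\}\subseteq I$ (the source condition proper and the $L^\infty$-regularity of $S'(\bar u)^\ast w$ play no role here).

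First I would establish that for every $u\in\Uad$, writing $v:=u-\bar u$,
\[
 J'(\bar u)v+\beta\,j'(\bar u;v)\ \ge\ \int_A\big||\bar p(x)|-\beta\big|\,|v(x)|\dx .
\]
Fix $\bar\lambda\in\partial\|\bar u\|_{L^1(\Omega)}$ with \eqref{eq:fo3sp} satisfied and put $g:=\bar p+\beta\bar\lambda$. Inserting $j'(\bar u;v)=\int_{\{\bar u>0\}}v-\int_{\{\bar u<0\}}v+\int_{\{\bar u=0\}}|v|$ and using $\bar\lambda(x)\in\partial|{\cdot}|(\bar u(x))$ a.e., the terms involving $\bar\lambda$ on $\{\bar u=0\}$ cancel and one obtains
\[
 J'(\bar u)v+\beta\,j'(\bar u;v)=\int_{\{\bar u\neq0\}}g\,v\dx+\int_{\{\bar u=0\}}\big(\bar p\,v+\beta|v|\big)\dx .
\]
Because $\Uad$ is a box constraint, \eqref{eq:fo3sp} is equivalent to the pointwise relations $g(x)\big(w-\bar u(x)\big)\ge0$ for all $w\in[u_a(x),u_b(x)]$ and a.a.\ $x$, hence $g(x)v(x)\ge0$ a.e.\ and $\int_{\{\bar u\neq0\}}gv\dx=\int_{\{\bar u\neq0\}}|g|\,|v|\dx$. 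Using the relations between $\bar u$ and $\bar p$ recalled just before Assumption~\ref{ass:ActiveSet_sparse}, one checks pointwise that $|g|=\big||\bar p|-\beta\big|$ on $\{\bar u\neq0\}$ (there $|\bar p|\ge\beta$, and $g=\bar p\mp\beta$ when $|\bar p|>\beta$) and that $\bar p v+\beta|v|\ge\big||\bar p|-\beta\big||v|$ on $\{\bar u=0\}$ — the latter from $\bar p v+\beta|v|\ge(\beta-|\bar p|)|v|$ together with, on the degenerate set where $0\in\{u_a(x),u_b(x)\}$, the fact that the sign of $v(x)$ is then forced, so a direct computation yields the bound with the weight $(|\bar p|-\beta)^+$ there too. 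Integrating and restricting the integral from $\Omega$ to $A$ (the integrand being nonnegative) proves the inequality.

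Second I would run the scalar estimate. By part~(1) of Assumption~\ref{ass:ActiveSet_sparse}, $\{|\bar p|=\beta\}\subseteq I$, so $h:=\big||\bar p|-\beta\big|>0$ a.e.\ on $A$, and part~(2) reads $\meas(\{x\in A:\ h(x)<\eps\})\le c\,\eps^\kappa$ for every $\eps>0$. With $C_b:=\|u_b-u_a\|_{L^\infty(\Omega)}\ge\|v\|_{L^\infty(\Omega)}$ and any $\eps>0$,
\[
 \int_A h\,|v|\dx\ \ge\ \eps\int_{A\cap\{h\ge\eps\}}|v|\dx=\eps\Big(\|v\|_{L^1(A)}-\int_{A\cap\{h<\eps\}}|v|\dx\Big)\ \ge\ \eps\big(\|v\|_{L^1(A)}-C_b\,c\,\eps^\kappa\big).
\]
If $v\equiv0$ on $A$ there is nothing to prove; otherwise the choice $\eps=\big(\|v\|_{L^1(A)}/(2C_bc)\big)^{1/\kappa}$ gives $\int_A h\,|v|\dx\ge\tfrac12\big(2C_bc\big)^{-1/\kappa}\|v\|_{L^1(A)}^{1+\frac1\kappa}$, which together with the first step yields the assertion.

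The step I expect to be the main obstacle is the pointwise bookkeeping in the first step on the set $\{\bar u=0\}$: one must combine the ``off'' contribution $(\beta-|\bar p|)^+$ coming from the non-smooth term $j'$ with the ``bang'' contribution $(|\bar p|-\beta)^+$ coming from $g$ into the single weight $\big||\bar p|-\beta\big|$, while treating the degenerate subsets where $u_a$ or $u_b$ vanishes and keeping in mind that $g$ (and $\bar\lambda$ restricted to $\{\bar u=0\}$) is only determined through the variational inequality \eqref{eq:fo3sp}. The scalar estimate of the second step is routine and essentially identical to the corresponding argument for linear state equations, cf.\ \cite{wachsmuth2013necessary,wachsmuth2011c}.
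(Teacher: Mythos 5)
Your proposal is correct and follows essentially the same route as the paper's proof: a pointwise reduction to the weighted lower bound with weight $\bigl||\bar p|-\beta\bigr|$ on $A$ (the paper does this by splitting directly into the $\eps$-level sets of $\{|\bar p|>\beta\}$ and $\{|\bar p|<\beta\}$, arriving at $\eps\|u-\bar u\|_{L^1(A_\eps)}$), followed by the measure estimate on $A\setminus A_\eps$ from part (2) of Assumption \ref{ass:ActiveSet_sparse} and optimization over $\eps$. Your explicit treatment of the degenerate set where $0\in\{u_a(x),u_b(x)\}$ is a point the paper glosses over, but the argument is the same.
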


\begin{proof}
We start by using the directional derivative of the $L^1$-norm and compute
\[
J'(\bar u)(u-\bar u) + \beta j'(\bar u; u - \bar u) \geq \int\limits_{\{|\bar p| > \beta\}} (\bar p + \beta \bar \lambda)(u- \bar u) \; \dx + \int\limits_{\{|\bar p| < \beta\}} \bar p (u - \bar u) + \beta |u - \bar u| \; \dx.
\]
Let $\eps > 0$ be given. We now split the set $\{| \bar p| > \beta\}$ and derive
\begin{align*}
\int\limits_{\{|\bar p| > \beta\}}& (\bar p + \beta \bar \lambda)(u- \bar u) \; \dx \geq \int\limits_{\{|\bar p| > \beta + \eps\}} (\bar p + \beta \bar \lambda)(u- \bar u) \; \dx\\
&= \int\limits_{\{|\bar p| > \beta + \eps, \; \bar p < - \beta\}} (\bar p + \beta \bar \lambda)(u- \bar u) \; \dx + \int\limits_{\{|\bar p| > \beta + \eps, \; \bar p > \beta\}} (\bar p + \beta \bar \lambda)(u- \bar u) \; \dx\\
&= \int\limits_{\{-\bar p-\beta > \eps, \; \bar p < - \beta\}} \overbrace{|\bar p + \beta|}^{ \geq \eps} |u- \bar u| \; \dx + \int\limits_{\{\bar p-\beta > \eps, \; \bar p > \beta\}} \overbrace{|\bar p - \beta|}^{\geq \eps} |u- \bar u| \; \dx\\
&\geq \eps \int\limits_{\{-\bar p-\beta > \eps, \; \bar p < - \beta\}} |u- \bar u| \; \dx + \eps \int\limits_{\{\bar p-\beta > \eps, \; \bar p > \beta\}}  |u- \bar u| \; \dx\\
&= \eps \int\limits_{\{ |\bar p| \geq \beta + \eps \}} |u - \bar u| \; \dx.
\end{align*}
Similarly, we can estimate
\begin{align*}
\int\limits_{\{|\bar p| < \beta\}} &\bar p (u - \bar u) + \beta |u - \bar u| \; \dx \geq \int\limits_{\{|\bar p| < \beta - \eps\}} \bar p (u - \bar u) + \beta |u - \bar u| \; \dx\\
&= \int\limits_{\{|\bar p| < \beta-\eps, \; \bar p \geq 0\}} \bar p (u - \bar u) + \beta |u - \bar u| \; \dx + \int\limits_{\{|\bar p| < \beta-\eps, \; \bar p < 0\}} \bar p (u - \bar u) + \beta |u - \bar u| \; \dx\\
&\geq \int\limits_{\{\bar p < \beta-\eps, \; \bar p \geq 0\}} \overbrace{(-\bar p)}^{\geq \eps - \beta} |u - \bar u| + \beta |u - \bar u| \; \dx + \int\limits_{\{-\bar p < \beta-\eps, \; \bar p < 0\}} \overbrace{\bar p}^{\geq \eps - \beta} |u - \bar u| + \beta |u - \bar u| \; \dx\\
&\geq \int\limits_{\{\bar p < \beta-\eps, \; \bar p \geq 0\}} (\eps - \beta) |u - \bar u| + \beta |u - \bar u| \; \dx + \int\limits_{\{-\bar p < \beta-\eps, \; \bar p < 0\}} (\eps - \beta) |u - \bar u| + \beta |u - \bar u| \; \dx\\
&\geq \eps \int\limits_{\{\bar p < \beta-\eps, \; \bar p \geq 0\}}  |u - \bar u| \; \dx + \eps \int\limits_{\{-\bar p < \beta-\eps, \; \bar p < 0\}}   |u - \bar u| \; \dx\\
&= \eps \int\limits_{\{|\bar p| < \beta - \eps\}}  |u - \bar u| \; \dx.
\end{align*}
Let us define
\[
 A_\eps := \{x \in A: \; \big||\bar p(x)| -\beta\big| \ge  \eps\}.
\]
Then the above computations yield
\[
 J'(\bar u)(u-\bar u) + \beta j'(\bar u; u - \bar u) \ge \eps \|u- \bar u \|_{L^1(\eps)}.
\]
Let us note that assumption \ref{ass:ActiveSet_sparse} implies $|A \setminus A_\eps|\le c\, \eps^{\kappa }$.
Now, putting everything together,  we obtain using the regularity assumption on the active set
\begin{align*}
J'(\bar u)(u-\bar u) + \beta j'(\bar u; u - \bar u)
&\ge \eps \|u - \bar u\|_{L^1(A_\eps)}\\
&= \eps \|u - \bar u\|_{L^1(A)} -  \eps \|u - \bar u\|_{L^1(A \setminus A_\eps)}\\
&\geq \eps \|u - \bar u\|_{L^1(A)} -  \eps \|u - \bar u\|_{L^\infty(\Omega)} |A \setminus A_\eps|\\
&\geq \eps \|u - \bar u\|_{L^1(A)} -  c\, \eps^{\kappa + 1}.
\end{align*}
Here $c>1$ is a constant independent from $u$. Setting $\eps = c^{- \frac{2}{\kappa}} \|u - \bar u\|_{L^1(A)}^{\frac{1}{\kappa}}$
proves the claim.
\end{proof}

We  are now in the position to prove convergence rates. The proof mainly follows the proof of Theorem \ref{thm:rates_u}.
\begin{theorem}
Let $\bar u$ satisfy Assumption \ref{ass:ActiveSet_sparse} and let the assumptions of Theorem \ref{thm:ssc_sparse} hold for $\bar u$.
Let $(u_\alpha)_\alpha$ be a family of stationary points converging weakly in $L^2(\Omega)$ to $\bar u$.
Then it holds with $d=\min(\kappa,1)$ for $\alpha\searrow 0$ sufficiently small
\[\begin{aligned}
\|z_{u_\alpha-\bar u}\|_{L^2(\Omega)} &= \mathcal{O}(\alpha^{\frac{d+1}2}),\\
\|u_\alpha-\bar u\|_{L^1(A)} & = \mathcal{O}(\alpha^{\frac{\kappa (d+1)}{\kappa+1}}),\\
\|u_\alpha-\bar u\|_{L^2(\Omega)} &= \mathcal{O}(\alpha^{d/2}).
\end{aligned}\]
In the case $w=0$ or $A=\Omega$, these convergences rates are obtained with $d:=\kappa$.
\end{theorem}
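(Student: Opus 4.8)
The plan is to run the proof of Theorem~\ref{thm:rates_u} almost line by line, substituting the objects of the sparse problem and handling the non-smooth term $\beta j$ by convexity. First I would write down the first-order condition of a stationary point $u_\alpha$ of \eqref{eq:tikhonov_problem_sparse}: there is $\lambda_\alpha\in\partial\|u_\alpha\|_{L^1(\Omega)}$ with $(p_\alpha+\beta\lambda_\alpha+\alpha u_\alpha,\,u-u_\alpha)_{L^2(\Omega)}\ge 0$ for all $u\in\Uad$, which, since $j'(u_\alpha;v)\ge(\lambda_\alpha,v)_{L^2(\Omega)}$, implies
\[
 J'(u_\alpha)(u-u_\alpha)+\beta j'(u_\alpha;u-u_\alpha)+\alpha(u_\alpha,u-u_\alpha)_{L^2(\Omega)}\ge 0\qquad\forall u\in\Uad .
\]
Testing this with $u=\bar u$, testing the improved first-order condition of Lemma~\ref{lem:improvedfirstorder_sparse} with $u=u_\alpha$, adding the two, and discarding the nonpositive term $\beta\big(j'(\bar u;u_\alpha-\bar u)+j'(u_\alpha;\bar u-u_\alpha)\big)$ (nonpositive because $j'(w;v)\le j(w+v)-j(w)$ for the convex $j$), I obtain exactly the inequality used in Theorem~\ref{thm:rates_u},
\[
 c_A\|u_\alpha-\bar u\|_{L^1(A)}^{1+\frac1\kappa}+\alpha\|u_\alpha-\bar u\|_{L^2(\Omega)}^2
 \le\alpha(\bar u,\bar u-u_\alpha)_{L^2(\Omega)}+\big(J'(\bar u)-J'(u_\alpha)\big)(u_\alpha-\bar u).
\]

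The right-hand side is handled as in Theorem~\ref{thm:rates_u}. Since the source condition in Assumption~\ref{ass:ActiveSet_sparse} has the same form $\chi_I\bar u=\chi_I P_\Uad(S'(\bar u)^\ast w)$ as in Assumption~\ref{ass:ActiveSet}, Lemma~\ref{lem:techniquest} applies verbatim and, with Young's inequality, bounds the first term by $\alpha\|w\|_{L^2(\Omega)}\|z_{u_\alpha-\bar u}\|_{L^2(\Omega)}+\tfrac{c_A}{2}\|u_\alpha-\bar u\|_{L^1(A)}^{1+\frac1\kappa}+C\alpha^{\kappa+1}$. For the second term, Taylor's theorem gives $\big(J'(\bar u)-J'(u_\alpha)\big)(u_\alpha-\bar u)=-J''(\bar u)(u_\alpha-\bar u)^2-\big(J''(\tilde u_\alpha)-J''(\bar u)\big)(u_\alpha-\bar u)^2$ with $\tilde u_\alpha$ between $\bar u$ and $u_\alpha$ (hence $\tilde u_\alpha\rightharpoonup\bar u$), and Lemma~\ref{lemma}, which concerns $J$ only, controls the last term by $\tfrac\delta4\|z_{u_\alpha-\bar u}\|_{L^2(\Omega)}^2$.

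The genuinely new step is to show that $v_\alpha:=u_\alpha-\bar u$ lies in the modified extended critical cone $\tilde C_{\bar u}^\tau$, so that Assumption~\ref{ass:SecondOrder_sparse} yields $J''(\bar u)v_\alpha^2\ge\delta\|z_{v_\alpha}\|_{L^2(\Omega)}^2$. The sign conditions defining $\tilde C_{\bar u}^\tau$ are automatic because $u_\alpha,\bar u\in\Uad$. For the remaining inequality $J'(\bar u)v_\alpha+\beta j'(\bar u;v_\alpha)\le\tau\|z_{v_\alpha}\|_{L^2(\Omega)}$ I would argue in the spirit of \cite{casas2012}: from $u_\alpha\rightharpoonup\bar u$, Lemma~\ref{lem:statestrongconv} and Lemma~\ref{lem:PDEstab} one gets $p_\alpha\to\bar p$ in $L^\infty(\Omega)$, and combining Lemma~\ref{lem:PDEstab} with Lemma~\ref{lem41} gives $\|p_\alpha-\bar p\|_{L^\infty(\Omega)}\le c\|z_{v_\alpha}\|_{L^2(\Omega)}$ for $\alpha$ small. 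Using the pointwise form of the optimality system of $u_\alpha$ (equivalently $u_\alpha=P_\Uad\big(-\tfrac1\alpha(p_\alpha-P_{[-\beta,\beta]}p_\alpha)\big)$) together with the pointwise relations between $\bar u$ and $\bar p$ recalled before Assumption~\ref{ass:ActiveSet_sparse}, one checks that $v_\alpha$ is supported on $\{\,\big||\bar p|-\beta\big|\le\omega(\alpha)\,\}$ with $\omega(\alpha)\le c(\alpha+\|z_{v_\alpha}\|_{L^2(\Omega)})\to 0$, that the pointwise integrand of $J'(\bar u)v_\alpha+\beta j'(\bar u;v_\alpha)$ is nonnegative and vanishes on $\{\,|\bar p|=\beta\,\}$, and that it is bounded by $c\,\omega(\alpha)\,|v_\alpha|$ elsewhere; integrating, using the measure estimate Assumption~\ref{ass:ActiveSet_sparse}(ii) on $A$ and continuity of measure on $I$, and $\|p_\alpha-\bar p\|_{L^\infty(\Omega)}\le c\|z_{v_\alpha}\|_{L^2(\Omega)}$, one absorbs the $\alpha$-contribution and concludes $v_\alpha\in\tilde C_{\bar u}^\tau$ for $\alpha$ small. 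This cone membership is the point I expect to cost the most work: unlike the smooth case, where $u_\alpha=\bar u$ holds outright on $\{|\bar p|>\tau\}$, here the inequality must be checked quantitatively on the set where $|\bar p|$ is close to $\beta$, which is precisely where Assumption~\ref{ass:ActiveSet_sparse}(ii) is used; one may in addition have to treat the degenerate case $\|z_{v_\alpha}\|_{L^2(\Omega)}=o(\alpha)$ separately, where the rates follow already from $\|v_\alpha\|_{L^1(A)}\lesssim\omega(\alpha)^\kappa$ and the quadratic growth of Theorem~\ref{thm:ssc_sparse}.

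Granting $v_\alpha\in\tilde C_{\bar u}^\tau$, collecting all estimates exactly as in Theorem~\ref{thm:rates_u} produces
\[
 \tfrac\delta2\|z_{v_\alpha}\|_{L^2(\Omega)}^2+\tfrac{c_A}2\|u_\alpha-\bar u\|_{L^1(A)}^{1+\frac1\kappa}+\alpha\|u_\alpha-\bar u\|_{L^2(\Omega)}^2\le\delta^{-1}\|w\|_{L^2(\Omega)}^2\,\alpha^2+C\alpha^{\kappa+1}.
\]
The right-hand side is $\mathcal O(\alpha^{\min(2,\kappa+1)})=\mathcal O(\alpha^{d+1})$ with $d=\min(\kappa,1)$, and reading off each summand gives $\|z_{v_\alpha}\|_{L^2(\Omega)}=\mathcal O(\alpha^{(d+1)/2})$, $\|u_\alpha-\bar u\|_{L^1(A)}=\mathcal O(\alpha^{\kappa(d+1)/(\kappa+1)})$ and $\|u_\alpha-\bar u\|_{L^2(\Omega)}=\mathcal O(\alpha^{d/2})$. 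If $w=0$ the $\alpha^2$-term is absent; if $A=\Omega$, then $\{\,|\bar p|=\beta\,\}$ is null by the source condition and $\alpha(\bar u,\bar u-u_\alpha)_{L^2(\Omega)}\le\alpha\|\bar u\|_{L^\infty(\Omega)}\|u_\alpha-\bar u\|_{L^1(\Omega)}$ can be absorbed by Young's inequality without generating an $\alpha^2$-term; in both cases the right-hand side is $\mathcal O(\alpha^{\kappa+1})$ and the rates improve to $d=\kappa$.
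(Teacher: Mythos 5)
Your first part (the variational inequalities, discarding $\beta\big(j'(u_\alpha;\bar u-u_\alpha)+j'(\bar u;u_\alpha-\bar u)\big)\le 0$ by convexity, Lemma \ref{lem:techniquest}, Young's inequality, the Taylor expansion and Lemma \ref{lemma}) coincides with the paper's argument. The critical divergence is how you get access to the second-order condition. You try to prove that $v_\alpha:=u_\alpha-\bar u$ \emph{always} lies in $\tilde C_{\bar u}^{\tau}$ for small $\alpha$, via a pointwise analysis of the projection formula. That step has a genuine gap. To verify the cone inequality you must bound $J'(\bar u)v_\alpha+\beta j'(\bar u;v_\alpha)\le\omega(\alpha)\,\|v_\alpha\|_{L^1(\Omega)}$ by $\tau\|z_{v_\alpha}\|_{L^2(\Omega)}$ with $\omega(\alpha)\le c\big(\alpha+\|z_{v_\alpha}\|_{L^2(\Omega)}\big)$. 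The contribution $c\,\|z_{v_\alpha}\|_{L^2(\Omega)}\|v_\alpha\|_{L^1(\Omega)}$ is only absorbable if $\|v_\alpha\|_{L^1(\Omega)}\le\tau/c$, which does not follow from the assumed \emph{weak} convergence $u_\alpha\rightharpoonup\bar u$ (the theorem deliberately assumes no more than that); and the contribution $c\,\alpha\,\|v_\alpha\|_{L^1(\Omega)}$ cannot be dominated by $\tau\|z_{v_\alpha}\|_{L^2(\Omega)}$ when $\|z_{v_\alpha}\|_{L^2(\Omega)}=o(\alpha)$ --- you flag this degenerate case yourself but do not resolve it. So cone membership is not established, and in fact the paper never claims it.

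The paper instead argues by dichotomy. If $v_\alpha\in\tilde C_{\bar u}^{\tau}$, it proceeds exactly as in your case-(1) chain. If $v_\alpha\notin\tilde C_{\bar u}^{\tau}$, then by the very definition of the modified cone the inequality \emph{fails}, i.e.
\[
J'(\bar u)v_\alpha+\beta j'(\bar u;v_\alpha)>\tau\|z_{v_\alpha}\|_{L^2(\Omega)},
\]
which, averaged with Lemma \ref{lem:improvedfirstorder_sparse}, gives the lower bound $\tfrac{\tau}{2}\|z_{v_\alpha}\|_{L^2(\Omega)}+\tfrac{c_A}{2}\|v_\alpha\|_{L^1(A)}^{1+1/\kappa}$ for the left-hand side. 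The only second-order term remaining is $-J''(\tilde u_\alpha)v_\alpha^2$, bounded in modulus by $c\|z_{v_\alpha}\|_{L^2(\Omega)}^2$ via Lemma \ref{lemma}; since $z_{v_\alpha}\to0$ this quadratic term is absorbed into the \emph{linear} term $\tfrac{\tau}{2}\|z_{v_\alpha}\|_{L^2(\Omega)}$ for small $\alpha$, yielding $\tfrac{\tau}{4}\|z_{v_\alpha}\|_{L^2(\Omega)}+\tfrac{c_A}{4}\|v_\alpha\|_{L^1(A)}^{1+1/\kappa}+\alpha\|v_\alpha\|_{L^2(\Omega)}^2\le C(\alpha^2+\alpha^{\kappa+1})$, which gives even better rates in this case. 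You should replace your cone-membership argument by this case distinction; the rest of your write-up, including the bookkeeping for $w=0$ and $A=\Omega$, then goes through as stated.
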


\begin{proof}
We split the proof in two parts
and consider the two cases $u_\alpha - \bar u \in \tilde C_{\bar u}^\tau$ and $u_\alpha - \bar u \not\in \tilde C_{\bar u}^\tau$.

{\em (1) The case $u_\alpha - \bar u \in \tilde C_{\bar u}^\tau$.} The optimality conditions for $u_\alpha$ and $\bar u$ are given as
\begin{align}
(p_\alpha + \alpha u_\alpha, u - u_\alpha)_{L^2(\Omega)} + \beta j'(u_\alpha; u - u_\alpha) & \geq 0 \quad \forall u \in \Uad, \label{eq612}\\
(\bar p,  u - \bar u)_{L^2(\Omega)} + \beta j'(\bar u; u - \bar u) &\geq c_A \|u-\bar u\|_{L^1(A)}^{1 + \frac{1}{\kappa}} \quad \forall u \in\Uad
\label{eq613}.
\end{align}
Note that $j$ is a convex function, hence we have the identity
\[
j'(x; y-x) \leq j(y) - j(x),
\]
leading to
\[
j'(u_\alpha; \bar u - u_\alpha) + j'(\bar u; u_\alpha - \bar u) \leq 0.
\]
Testing \eqref{eq612} and \eqref{eq613} with $\bar u$ and $u_\alpha$, respectively,
we obtain
\begin{align*}
c_A \|u_\alpha - \bar u\|_{L^1(A)}^{1 + \frac{1}{\kappa}} + &\alpha \|u_\alpha - \bar u\|_{L^2(\Omega)}^2\\
&\leq \alpha (\bar u,\bar u -u_\alpha)_{L^2(\Omega)} + (p_\alpha - \bar p, \bar u - u_\alpha)_{L^2(\Omega)} \\
& \qquad + \beta ( j'(u_\alpha; \bar u - u_\alpha) + j'(\bar u; u_\alpha - \bar u) ) \\
&\leq \alpha (\bar u,\bar u -u_\alpha)_{L^2(\Omega)} + (J'(u_\alpha) -J'(\bar u))( \bar u - u_\alpha).
\end{align*}
As the regularity assumptions \ref{ass:ActiveSet} and \ref{ass:ActiveSet_sparse} only differ in
item (ii), Lemma \ref{lem:techniquest} is applicable here as well,
which gives with Young's inequality
\[\begin{split}
 \alpha (\bar u,\bar u -u_\alpha)_{L^2(\Omega)} &\le \alpha \|w\|_{L^2(\Omega)} \|z_{u_\alpha-\bar u}\|_{L^2(\Omega)}
 + \alpha \|\bar u -S'(\bar u)^\ast w\|_{L^\infty(A)}  \|u_\alpha-\bar u\|_{L^1(A)}\\
 &\le  \alpha \|w\|_{L^2(\Omega)} \|z_{u_\alpha-\bar u}\|_{L^2(\Omega)} + \frac{c_A}2 \|u_\alpha-\bar u\|_{L^1(A)}^{1+\frac1\kappa} +C \alpha^{\kappa+1},
\end{split}\]
with $C>0$ independent of $\alpha$. By Taylor expansion,
we obtain
\[
 (J'(\bar u)-J'(u_\alpha))(u_\alpha-\bar u) = -J''(\bar u)(u_\alpha-\bar u)^2
 - \big(J''(\tilde u_\alpha) - J''(\bar u)\big)(u_\alpha-\bar u)^2,
\]
with $\tilde u_\alpha$ between $u_\alpha$ and $\bar u$. Since $u_\alpha - \bar u \in \tilde C_{\bar u}^\tau$ we can apply the second-order condition on $\bar u$ to obtain
\[
 J''(\bar u)(u_\alpha-\bar u)^2 \ge  \delta\|z_{u_\alpha - \bar u}\|_{L^2(\Omega)}^2.
\]
By Lemma \ref{lemma}, we find that
\[
 |J''(\tilde u_\alpha)v^2 - J''(\bar u)v^2 | \le \frac\delta{4}\|z_v\|_{L^2(\Omega)}^2
\]
for all $\alpha$ sufficiently small. Altogether, we obtain
\begin{align*}
c_A \|u_\alpha - \bar u\|_{L^1(A)}^{1 + \frac{1}{\kappa}} &+ \alpha \| u_\alpha - \bar u\|_{L^2(\Omega)}^2 \leq \alpha (\bar u,\bar u -u_\alpha)_{L^2(\Omega)} + ( J'(u_\alpha) - J'(\bar u) )(\bar u - u_\alpha)\\
&\leq \alpha \|w\|_{L^2(\Omega)}\|z_{u_\alpha - \bar u}\|_{L^2(\Omega)} + \frac{c_A}{2} \|u_\alpha - \bar u\|_{L^1(A)}^{1 + \frac{1}{\kappa}} + C \alpha^{\kappa +1}\\
&\quad - J''(\bar u)(u_\alpha - \bar u)^2 - (J''(\tilde u_\alpha) - J''(\bar u))(u_\alpha - \bar u)^2\\
&\leq \alpha^2 \delta^{-1}\|w\|_{L^2(\Omega)}^2 - \frac{\delta}{2} \|z_{u_\alpha - \bar u}\|_{L^2(\Omega)}^2 + \frac{c_A}{2}\|u_\alpha - \bar u\|_{L^1(A)}^{1 + \frac{1}{\kappa}}
.
\end{align*}
This yields
\begin{multline*}
\frac\delta{2} \|z_{u_\alpha-\bar u}\|_{L^2(\Omega)}^2
+ \frac{c_A}2 \|u_\alpha-\bar u\|_{L^1(A)}^{1+\frac1\kappa}
+ \alpha \|u_\alpha-\bar u\|_{L^2(\Omega)}^2
\le
\delta^{-1}\|w\|_{L^2(\Omega)}^2 \alpha^2 +C \alpha^{\kappa+1},
\end{multline*}
which implies the existence of $C>0$ such that
\[
\|z_{u_\alpha - \bar u}\|_{L^2(\Omega)}^2 + \|u_\alpha - \bar u\|_{L^1(A)}^{1 + \frac{1}{\kappa}} + \alpha \|u_\alpha - \bar u\|_{L^2(\Omega)} \leq C ( \alpha^{\kappa + 1} + \alpha^2)
\]
holds for all $\alpha$ sufficiently small.

{\em (2) The case $u_\alpha - \bar u \not\in \tilde C_{\bar u}^\tau$.}
By definition of the extended critical cone, we know
\[
J'(\bar u)(u_\alpha - \bar u) + \beta j'(\bar u; u_\alpha - \bar u) > \tau \|z_{u_\alpha - \bar u}\|_{L^2(\Omega)}.
\]
Combining this with Lemma \ref{lem:improvedfirstorder_sparse} yields
\[
J'(\bar u)(u_\alpha - \bar u) + \beta j'(\bar u; u_\alpha - \bar u) > \frac{\tau}{2} \|z_{u_\alpha - \bar u}\|_{L^2(\Omega)} + \frac{c_A}{2}\|u_\alpha - \bar u\|_{L^1(A)}^{1 + \frac{1}{\kappa}}.
\]
Using the expansion
\[
 (J'(\bar u)-J'(u_\alpha))(u_\alpha-\bar u) = -J''(\tilde u_\alpha)(u_\alpha-\bar u)^2
 \]
with $\tilde u_\alpha$ between $\bar u$ and $u_\alpha$,
we get similarly as in the first part of the proof
\begin{multline*}
\frac{\tau}{2}\|z_{u_\alpha - \bar u}\|_{L^2(\Omega)} + \frac{c_A}{2}\|u_\alpha - \bar u\|_{L^1(A)}^{1 + \frac{1}{\kappa}} + \alpha \|u_\alpha - \bar u\|_{L^2(\Omega)}^2\\
\leq \alpha \|w\|_{L^2(\Omega)} \|z_{u_\alpha - \bar u}\|_{L^2(\Omega)} + \frac{c_A}{4}\|u_\alpha - \bar u\|_{L^1(A)}^{1 + \frac{1}{\kappa}} + C \alpha^{\kappa + 1}
- J''(\tilde u_\alpha)(u_\alpha - \bar u)^2.
\end{multline*}
Using the structure of $J''$ and Lemma \ref{lemma}, we obtain
\[
\begin{split}
|J''(\tilde u)(u_\alpha - \bar u)^2|  &=
|J''(\bar u)(u_\alpha - \bar u)^2|  + |(J''(\tilde u)-J''(\bar u)(u_\alpha - \bar u)^2|\\
& \le c \|z_{u_\alpha - \bar u}\|_{L^2(\Omega)}^2
\end{split}
\]
for all $\alpha$ sufficiently small.
Hence, it holds
\begin{multline*}
\frac{\tau}{2}\|z_{u_\alpha - \bar u}\|_{L^2(\Omega)} + \frac{c_A}{4}\|u_\alpha - \bar u\|_{L^1(A)}^{1 + \frac{1}{\kappa}} + \alpha \|u_\alpha - \bar u\|_{L^2(\Omega)}^2\\
\leq \alpha^2 \|w\|_{L^2(\Omega)}^2 + C \alpha^{\kappa + 1}
+ c \|z_{u_\alpha - \bar u}\|_{L^2(\Omega)}^2.
\end{multline*}
Since $z_{u_\alpha - \bar u}\to 0$ in $L^2(\Omega)$, the following inequality is
satisfied for all $\alpha$ small enough
\[
\frac{\tau}{4}\|z_{u_\alpha - \bar u}\|_{L^2(\Omega)} + \frac{c_A}{4}\|u_\alpha - \bar u\|_{L^1(A)}^{1 + \frac{1}{\kappa}} + \alpha \|u_\alpha - \bar u\|_{L^2(\Omega)}^2\\
\leq \alpha^2 \|w\|_{L^2(\Omega)}^2 + C \alpha^{\kappa + 1},
\]
which implies the claim for the second case.
\end{proof}

\section{Numerical examples}\label{sec:num}
In this section we present numerical examples to support our theoretical results. We construct a bang-bang solution for the following optimal control problem:
\begin{equation}\label{eq:model_prob}
    \text{Minimize} \quad  \frac{1}{2}\|y - y_d\|_Y^2
\end{equation}
subject to
\[
 \begin{aligned}
 -\Delta y + f(y)& = u + e_\Omega \quad \text{in } \Omega\\
    				  y &= 0 \quad \text{on } \partial \Omega
    				  \end{aligned}
\]
and
\[
-1 \leq u \leq 1 \quad \text{a.e. in } \Omega.
\]
with $\Omega = (0,1)$. To solve the regularized optimal control problem numerically,
we use dolfin-adjoint \cite{dolfin_adjoint,dolfin_adjoint_opt} with linear finite elements on an equidistant mesh with $10^6$ cells. We make use of the adjoint equation
\[
 -\Delta \bar p + f'(\bar y)\bar p = \bar y - y_d
\]
and set:
\begin{align*}
\bar p(x) &:= \sin(2 \pi x)\\
\bar u(x) &:= -\text{sgn}(\bar p(x))\\
\bar y(x) &:= \sin(\pi x) \\
e_\Omega(x) &:= -\bar u(x) - \Delta \bar y(x) + f( \bar y(x)) \\
y_d(x) &:= \bar y(x) + \Delta \bar p(x) - f'( \bar y(x))\bar p(x)
\end{align*}
It is easy to check that $(\bar u, \bar y, \bar p)$ is a solution of \eqref{eq:model_prob}. Moreover, Assumption \ref{ass:ActiveSet} is satisfied with $A = \Omega$ and $\kappa = 1$, see \cite{hinze2012}. We expect to obtain the following convergence rate with respect to the $L^2$ norm:
\[
\|u_\alpha - u^\dagger\|_{L^2(\Omega)} = \mathcal{O}\left( \alpha^{\frac{1}{2}} \right).
\]
We test with 3 different nonlinearities
\begin{align*}
f_1(x,y) &:= \sin(y),\\
f_2(x,y) &:= -y + y^3,\\
f_3(x,y) &:= \exp(y).
\end{align*}

The results can be seen in Figure \ref{fig:ex1}, \ref{fig:ex2} and \ref{fig:ex3},
where we plotted the error $\|u_\alpha-\bar u\|_{L^2(\Omega)}$ for solutions $u_\alpha$ of the discretized and
regularized problem.
As expected, the theoretical convergence order is very well resolved.

\begin{center}
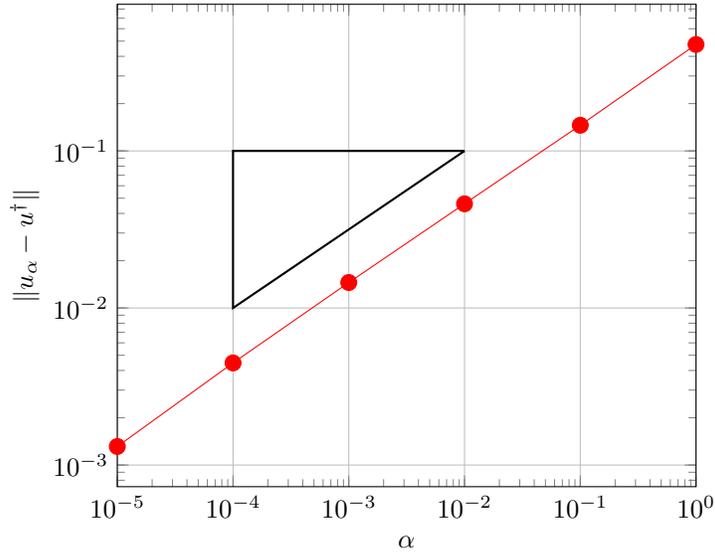

\begin{tikzpicture}
\begin{loglogaxis}[
    title=Example 1,
    xlabel={$\alpha$},
    ylabel style={align=center}, 
    ylabel=$\|u_\alpha - u^\dagger\|$,
    xminorticks=true,
    xmin=1e-5,
    xmax=1,
    height=8cm,
    grid=major,
    legend pos=north west,
]

\addplot[color=red, mark=*, mark options={scale=1.5}] plot coordinates {

(1e-0, 0.476142397202  )
(1e-1, 0.145704873952 )
(1e-2, 0.0460515358838  )
(1e-3, 0.0145221790711  )
(1e-4, 0.00446853328166  )
(1e-5, 0.00131250403435 )

};

\addplot[thick, color=black, no markers] plot coordinates {

(1e-2, 1e-1)
(1e-4, 1e-1)
(1e-4, 1e-2)
(1e-2, 1e-1)
};

\end{loglogaxis}
\end{tikzpicture}

\captionof{figure}{Error $\|u_\alpha - u^\dagger\|_{L^2(\Omega)}$ for $f_1$ in a double logarithmic plot for different values for $\alpha$. For comparison we plotted a triangle with slope $\frac{1}{2}$.}\label{fig:ex1}
\end{center}


\begin{figure}[htbp]
\makebox[\textwidth]{\begin{tikzpicture}
\begin{loglogaxis}[
    title=Example 2,
    xlabel={$\alpha$},
    ylabel style={align=center}, 
    ylabel=$\|u_\alpha - u^\dagger\|$,
    xminorticks=true,
    xmin=1e-5,
    xmax=1,
    height=8cm,
    grid=major,
    legend pos=north west,
]

\addplot[color=red, mark=*, mark options={scale=1.5}] plot coordinates {

(1e-0,  0.476147224795 )
(1e-1,  0.145710684875)
(1e-2,  0.0460537097376 )
(1e-3,  0.0145217253411 )
(1e-4,  0.00446832736146 )
(1e-5, 0.00131056655159 )

};

\addplot[thick, color=black, no markers] plot coordinates {

(1e-2, 1e-1)
(1e-4, 1e-1)
(1e-4, 1e-2)
(1e-2, 1e-1)
};

\end{loglogaxis}
\end{tikzpicture}}
\caption{Error $\|u_\alpha - u^\dagger\|_{L^2(\Omega)}$ for $f_2$ in a double logarithmic plot for different values for $\alpha$. For comparison we plotted a triangle with slope $\frac{1}{2}$.}
\label{fig:ex2}
\end{figure}


\begin{figure}[htbp]
\makebox[\textwidth]{\begin{tikzpicture}
\begin{loglogaxis}[
    title=Example 3,
    xlabel={$\alpha$},
    ylabel style={align=center}, 
    ylabel=$\|u_\alpha - u^\dagger\|$,
    xminorticks=true,
    xmin=1e-5,
    xmax=1,
    height=8cm,
    grid=major,
    legend pos=north west,
]

\addplot[color=red, mark=*, mark options={scale=1.5}] plot coordinates {

(1e-0,  0.476146311265 )
(1e-1,  0.145704900204)
(1e-2,  0.0460515901925 )
(1e-3,   0.014521762778)
(1e-4,  0.00447954695607 )
(1e-5,  0.00128260486954)

};

\addplot[thick, color=black, no markers] plot coordinates {

(1e-2, 1e-1)
(1e-4, 1e-1)
(1e-4, 1e-2)
(1e-2, 1e-1)
};

\end{loglogaxis}
\end{tikzpicture}}
\caption{Error $\|u_\alpha - u^\dagger\|_{L^2(\Omega)}$ for $f_3$ in a double logarithmic plot for different values for $\alpha$. For comparison we plotted a triangle with slope $\frac{1}{2}$.}
\label{fig:ex3}
\end{figure}

\bibliographystyle{plain}
\bibliography{literatur}

\begin{thebibliography}{10}

\bibitem{casas2012}
E.~Casas.
\newblock Second order analysis for bang-bang control problems of {PDE}s.
\newblock {\em SIAM J. Control Optim.}, 50(4):2355--2372, 2012.

\bibitem{casasreyestro08}
E.~Casas, J.~C. de~los Reyes, and F.~Tr\"oltzsch.
\newblock Sufficient second-order optimality conditions for semilinear control
  problems with pointwise state constraints.
\newblock {\em SIAM J. Optim.}, 19(2):616--643, 2008.

\bibitem{CasasHerzogWachsmuth2012}
E.~Casas, R.~Herzog, and G.~Wachsmuth.
\newblock Optimality conditions and error analysis of semilinear elliptic
  control problems with {$L^1$} cost functional.
\newblock {\em SIAM J. Optim.}, 22(3):795--820, 2012.

\bibitem{CasasTroeltzsch2014}
Eduardo Casas and Fredi Tr\"oltzsch.
\newblock Second-order and stability analysis for state-constrained elliptic
  optimal control problems with sparse controls.
\newblock {\em SIAM J. Control Optim.}, 52(2):1010--1033, 2014.

\bibitem{hinze2012}
K.~Deckelnick and M.~Hinze.
\newblock A note on the approximation of elliptic control problems with
  bang-bang controls.
\newblock {\em Comput. Optim. Appl.}, 51(2):931--939, 2012.

\bibitem{engl1996}
H.~W. Engl, M.~Hanke, and A.~Neubauer.
\newblock {\em Regularization of inverse problems}, volume 375 of {\em
  Mathematics and its Applications}.
\newblock Kluwer Academic Publishers Group, Dordrecht, 1996.

\bibitem{dolfin_adjoint}
P.~E. Farrell, D.~A. Ham, S.~W. Funke, and M.~E. Rognes.
\newblock Automated derivation of the adjoint of high-level transient finite
  element programs.
\newblock {\em SIAM J. Sci. Comput.}, 35(4):C369--C393, 2013.

\bibitem{dolfin_adjoint_opt}
S.~W. Funke and P.~E. Farrell.
\newblock A framework for automated pde-constrained optimisation.
\newblock {\em CoRR}, abs/1302.3894, 2013.

\bibitem{fengshan1998}
F.~Liu and M.~Z. Nashed.
\newblock Regularization of nonlinear ill-posed variational inequalities and
  convergence rates.
\newblock {\em Set-Valued Anal.}, 6(4):313--344, 1998.

\bibitem{neubauer1989}
A.~Neubauer.
\newblock Tikhonov regularisation for nonlinear ill-posed problems: optimal
  convergence rates and finite-dimensional approximation.
\newblock {\em Inverse Problems}, 5(4):541--557, 1989.

\bibitem{wachsmuth2016}
F.~P{\"o}rner and D.~Wachsmuth.
\newblock An iterative {B}regman regularization method for optimal control
  problems with inequality constraints.
\newblock {\em Optimization}, 65(12):2195--2215, 2016.

\bibitem{stadler}
G.~Stadler.
\newblock Elliptic optimal control problems with {$L^1$}-control cost and
  applications for the placement of control devices.
\newblock {\em Comput. Optim. Appl.}, 44(2):159--181, 2009.

\bibitem{stampac65}
G.~Stampacchia.
\newblock Le probl\`eme de {D}irichlet pour les \'equations elliptiques du
  second ordre \`a coefficients discontinus.
\newblock {\em Ann. Inst. Fourier (Grenoble)}, 15(fasc. 1):189--258, 1965.

\bibitem{troeltzsch2010}
F.~Tr{\"o}ltzsch.
\newblock {\em Optimal control of partial differential equations}, volume 112
  of {\em Graduate Studies in Mathematics}.
\newblock American Mathematical Society, Providence, RI, 2010.
\newblock Theory, methods and applications, Translated from the 2005 German
  original by J{\"u}rgen Sprekels.

\bibitem{daniels2016}
N.~von Daniels.
\newblock {\em Bang-bang control of parabolic equations}.
\newblock PhD thesis, University of Hamburg, 2016.

\bibitem{daniels2017}
N.~von Daniels.
\newblock Tikhonov regularization of control-constrained optimal control
  problems, 2017.

\bibitem{wachsmuth2013}
D.~Wachsmuth.
\newblock Adaptive regularization and discretization of bang-bang optimal
  control problems.
\newblock {\em Electron. Trans. Numer. Anal.}, 40:249--267, 2013.

\bibitem{wachsmuth2011}
D.~Wachsmuth and G.~Wachsmuth.
\newblock On the regularization of optimization problems with inequality
  constraints.
\newblock {\em Control Cybernet.}, 40(4):1125--1158, 2011.

\bibitem{wachsmuth2011c}
D.~Wachsmuth and G.~Wachsmuth.
\newblock Regularization error estimates and discrepancy principle for optimal
  control problems with inequality constraints.
\newblock {\em Control Cybernet.}, 40(4):1125--1158, 2011.

\bibitem{wachsmuth2013necessary}
D.~Wachsmuth and G.~Wachsmuth.
\newblock Necessary conditions for convergence rates of regularizations of
  optimal control problems.
\newblock In {\em System modeling and optimization}, volume 391 of {\em IFIP
  Adv. Inf. Commun. Technol.}, pages 145--154. Springer, Heidelberg, 2013.

\bibitem{wachsmuth2011b}
G.~Wachsmuth and D.~Wachsmuth.
\newblock Convergence and regularization results for optimal control problems
  with sparsity functional.
\newblock {\em ESAIM Control Optim. Calc. Var.}, 17(3):858--886, 2011.

\end{thebibliography}

\end{document}